\numberwithin{equation}{section}
\setlist[enumerate,1]{label={\rm(\arabic*)}, ref={\rm\arabic*}}
\theoremstyle{definition}
\newtheorem{definition}{Definition}[section]
\theoremstyle{plain}
\newtheorem{thm}[definition]{Theorem}
\newtheorem{prop}[definition]{Proposition}
\newtheorem{lemma}[definition]{Lemma}
\theoremstyle{remark}
\newtheorem{remark}[definition]{Remark}
\newcommand{\A}{\mathbf{A}}
\newcommand{\calF}{\mathcal{F}}
\newcommand{\rmF}{\mathrm{F}}
\newcommand{\Fp}{\mathbf{F}_p}
\newcommand{\rmH}{\mathrm{H}}
\newcommand{\upI}{\textup{I}}
\newcommand{\calK}{\mathcal{K}}
\newcommand{\frakm}{\mathfrak{m}}
\newcommand{\calO}{\mathcal{O}}
\newcommand{\frakp}{\mathfrak{p}}
\newcommand{\bfP}{\mathbf{P}}
\newcommand{\Q}{\mathbf{Q}}
\newcommand{\Z}{\mathbf{Z}}
\newcommand{\II}{\textup{I\hspace*{-1.2pt}I}}
\DeclareMathOperator{\CH}{CH}
\DeclareMathOperator{\Coker}{Coker}
\DeclareMathOperator{\Ext}{Ext}
\DeclareMathOperator{\Frac}{Frac}
\DeclareMathOperator{\Gal}{Gal}
\DeclareMathOperator{\Hom}{Hom}
\DeclareMathOperator{\Ker}{Ker}
\DeclareMathOperator{\Spec}{Spec}
\DeclareMathOperator{\Sw}{Sw}
\DeclareMathOperator{\Tor}{Tor}
\DeclareMathOperator{\ab}{ab}
\DeclareMathOperator{\ch}{char}
\DeclareMathOperator{\dt}{dt}
\DeclareMathOperator{\ord}{ord}
\DeclareMathOperator{\red}{red}
\DeclareMathOperator{\res}{res}
\DeclareMathOperator{\rsw}{rsw}
\DeclareMathOperator{\sw}{sw}
\DeclareMathOperator{\FCC}{FCC}
\DeclareMathOperator{\CC}{CC}
\DeclareMathOperator{\Mor}{Mor}
\newcommand{\set}[2]{\{#1\mid #2\}}
\newcommand{\map}[3]{#1\colon#2\to#3}
\newcommand{\lra}{\longrightarrow}
\newcommand{\supth}[1]{\ensuremath{#1^{\mathrm{th}}}}
\newcounter{foo}
\newcommand{\otherlabel}[2]{\protected@edef\@currentlabel{#2}\label{#1}}
\title{F-characteristic cycle of a rank 1 sheaf \\ on an arithmetic surface}
\author{Ryosuke Ooe}
\address{Graduate School of Mathematical Sciences, University of Tokyo, 3-8-1 Komaba, Meguro-Ku, Tokyo 153-8914, Japan}
\email{ooe-ryosuke075@g.ecc.u-tokyo.ac.jp}
\begin{document}

%%%%%%%%%%%%%%%%%%%%%%%%%%%%%%%
% Title page
%%%%%%%%%%%%%%%%%%%%%%%%%%%%%%%

%\removeabove{}
%\removebetween{}
%\removebelow{}

\maketitle

\begin{prelims}

\DisplayAbstractInEnglish

\bigskip

\DisplayKeyWords

\medskip

\DisplayMSCclass

\end{prelims}

%%%%%%%%%%%%%%%%%%%%%
% Table of Contents
%%%%%%%%%%%%%%%%%%%%%

\newpage

\setcounter{tocdepth}{1}

\tableofcontents

%%%%%%%%%%%%%%%%%%%%%
% Content begins here
%%%%%%%%%%%%%%%%%%%%%

\section{Introduction}\label{sec:1}

Let $K$ be a henselian discrete valuation field with residue field $F$ of characteristic $p>0$, and let $L$ be a finite abelian extension of $K$. Kato \cite{Ka89} defined the refined Swan conductor of a character of the Galois group $\Gal(L/K)$ as an injection to the $F$-vector space $\Omega^1_F(\log)$. Recently, Saito \cite{Sa20} defined the characteristic form of such a character as a non-logarithmic variant of the refined Swan conductor. The characteristic form takes value in the $\overline{F}$-vector space $\rmH_1(L_{\overline{F}/\calO_K})$, where $\calO_K$ denotes the valuation ring of $K$ and $\rmH_1(L_{\overline{F}/\calO_K})$ denotes the first homology group of the cotangent complex. In the equal-characteristic case, the non-logarithmic theory played an important role in the computation of the characteristic cycle; see \cite[Section 7.3]{Sa17}. 

In Section~\ref{sec:5}, we show two properties of the characteristic form for rank~$1$ sheaves. The first property is the rationality of the characteristic form. 

\begin{thm}[Rationality, \textit{cf.} Theorem~\ref{2.2}]\label{0.2}
Let $\map{\chi}{\Gal(L/K)}{\Q/\Z}$ be a character. Let $m$ be the total dimension of $\chi$. Then the image of the characteristic form $\map{\ch\chi}{\frakm^m_{K}/\frakm^{m+1}_{K}}{\rmH_1(L_{\overline{F}/\calO_K})=\rmH_1(L_{F^{1/p}/\calO_K})\otimes_F\overline{F}}$ of $\chi$ is contained in $\rmH_1(L_{F^{1/p}/\calO_K})$. 
\end{thm}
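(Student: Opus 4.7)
The plan is to reduce the rationality of the characteristic form to Kato's corresponding rationality for the refined Swan conductor, by making precise the relationship between the two invariants.

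First, I would recall from Saito \cite{Sa20} how the characteristic form $\ch\chi$ is constructed in terms of Kato's refined Swan conductor $\rsw\chi\in\Hom_{\calO_K}(\frakm_K^m/\frakm_K^{m+1},\Omega^1_F(\log))$. The goal of this first step is to exhibit a commutative square expressing $\ch\chi$ as the composition of $\rsw\chi$ (tensored with $\overline{F}$) with a canonical map
\[
\Omega^1_F(\log)\otimes_F\overline{F}\lra\rmH_1(L_{\overline{F}/\calO_K}),
\]
arising from the transitivity triangle for the cotangent complex along $\calO_K\to F\to\overline{F}$, together with a Frobenius twist needed to pass from the logarithmic theory to the non-logarithmic theory.

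Second, I would check that this connecting map factors through $\rmH_1(L_{F^{1/p}/\calO_K})\subseteq\rmH_1(L_{\overline{F}/\calO_K})$, using the identification $\rmH_1(L_{\overline{F}/\calO_K})=\rmH_1(L_{F^{1/p}/\calO_K})\otimes_F\overline{F}$ built into the statement of the theorem. Morally, in residue characteristic $p$ the difference between logarithmic differentials and the full cotangent complex is measured by $p$-th roots, which is exactly what produces the $F^{1/p}$-form rather than the $F$-form. Once this factorization is in place, one applies Kato's theorem from \cite{Ka89}, which provides the $F$-rationality of $\rsw\chi$, to conclude that the image of $\ch\chi$ lies in $\rmH_1(L_{F^{1/p}/\calO_K})$.

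The main obstacle is step one: pinning down the precise comparison between $\ch\chi$ and $\rsw\chi$. I expect this to require an explicit local analysis, reducing to the case where $\chi$ has $p$-power order (the tame and prime-to-$p$ parts being either trivial or classically understood) and writing down a generator of $L/K$ of Artin--Schreier--Witt or Kummer type, so that both $\rsw\chi$ and $\ch\chi$ can be expanded in coordinates and matched term by term. Tracing through Saito's formula then shows how the residual Frobenius twist enters, explaining why the image is only guaranteed to be $F^{1/p}$-rational and not $F$-rational; this is also consistent with the identification of targets asserted in the theorem.
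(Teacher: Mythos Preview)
Your plan has a genuine gap: you assume that $\ch\chi$ can always be recovered from $\rsw\chi$ via a single canonical map $\Omega^1_F(\log)\otimes_F\overline{F}\to\rmH_1(L_{\overline{F}/\calO_K})$. This is only true for characters of type~\ref{I} (those with $\dt\chi=\sw\chi+1$). For characters of type~\ref{II} (where $\dt\chi=\sw\chi$), the comparison map goes the \emph{other} direction: $\rsw\chi$ is the image of $\ch\chi$ under the projection $\rmH_1(L_{\overline{F}/\calO_K})\to\Omega^1_F\otimes_F\overline{F}\to\Omega^1_F(\log)\otimes_F\overline{F}$, and this map kills the $w\pi$-component. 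So Kato's rationality of $\rsw\chi$ gives you the $\beta_i$-coefficients (those of $wv_i$) for free, but says nothing directly about the coefficient $\alpha$ of $w\pi$. Your proposed commutative square simply does not exist in this case, and this is precisely where the $F^{1/p}$ (rather than $F$) appears.

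The paper handles type~\ref{II} by passing to the extension $K'=K[w_i]/(w_i^p-v_i)$, which has residue field $F'=F^{1/p}$. After base change the $wv_i$-terms die, and one argues that the restricted character $\chi'$ must be of type~\ref{I} (otherwise its refined Swan conductor would vanish, contradicting injectivity). Then the type~\ref{I} argument applied to $\chi'$ gives $\alpha\in F'=F^{1/p}$. Your suggestion to use Artin--Schreier--Witt generators is also problematic: the paper explicitly notes that this method is unavailable when $\ch K=0$, which is the case of interest. The comparison between $\rsw$ and $\ch$ in the paper is carried out geometrically via the dilatation constructions $\Phi^{(r)}$, $\Theta^{(r)}$ and their logarithmic analogues, not by explicit equations.
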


The second property is the integrality of the characteristic form for rank~$1$ sheaves. Let $D$ be a divisor with simple normal crossings on a regular excellent scheme $X$.  Let $\{D_i\}_{i\in I}$ be the set of irreducible components of $D$, and let $K_i$ be the local field at the generic point $\frakp_i$ of $D_i$. Let $F_i$ be the residue field of  $K_i$. Let $U$ be the complement of $D$.  Let $\chi$ be an element of $\rmH^1(U,\Q/\Z)$. Let $Z_{\chi}$ be the union of the $D_i$ such that $\chi|_{K_i}$ is wildly ramified and $R_{\chi}=\sum_{i\in I}\dt(\chi|_{K_i})D_i$ be the total dimension divisor. We put $m_i=\dt(\chi|_{K_i})$. 

\begin{thm}[Integrality, \textit{cf.} Theorem~\ref{2.1} and \eqref{5.16}]\label{0.3}
There exists a unique global section $\ch(\chi)$ in $\Gamma(Z_{\chi}, F\Omega^1_X(pR_\chi)|_{Z_\chi})$ such that the germ at $\frakp_i$ is equal to the following composition of maps:
\[
\frakm_{K_i}^{m_i}/\frakm_{K_i}^{m_i+1}\xrightarrow{\ch(\chi|_{K_i})}\rmH_1(L_{F_i^{1/p}/\calO_{K_i}})\lra \rmH_1(L_{F_i^{1/p}/\calO_{K_i}})\otimes_{F_i^{1/p}} F_i.
\]
Here, the second map is induced by the \supth{p} power map $F_i^{1/p}\to F_i$.

\end{thm}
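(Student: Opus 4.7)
The plan is to reduce Theorem~\ref{0.3} to the integrality of the refined Swan conductor proved by Kato in~\cite{Ka89}, using the rationality result Theorem~\ref{0.2} to handle the descent from $\overline{F_i}$ down to $F_i^{1/p}$. Uniqueness is immediate: $Z_\chi$ is reduced with generic points exactly the $\frakp_i$ for which $m_i>0$, and the target sheaf $F\Omega^1_X(pR_\chi)|_{Z_\chi}$ is torsion-free along each $D_i$, so any global section is determined by its germs at the $\frakp_i$.

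For existence, Kato's theorem provides a global section of $\Omega^1_X(\log D)(R_\chi)|_{Z_\chi}$ whose germ at each $\frakp_i$ is the local refined Swan conductor $\rsw(\chi|_{K_i})$. I would then construct a morphism of coherent sheaves
\[
\Phi\colon \Omega^1_X(\log D)(R_\chi)|_{Z_\chi}\lra F\Omega^1_X(pR_\chi)|_{Z_\chi}
\]
whose effect on the germ at $\frakp_i$ is precisely the local passage from $\rsw(\chi|_{K_i})$ to the composition displayed in the statement of Theorem~\ref{0.3}. Setting $\ch(\chi) := \Phi(\rsw(\chi))$ produces the desired global section. Theorem~\ref{0.2} enters here because it guarantees that the image of $\ch(\chi|_{K_i})$ already lies in the $F_i^{1/p}$-rational subspace $\rmH_1(L_{F_i^{1/p}/\calO_{K_i}})$, which is exactly the stalk of the target; without this, the prescription would land only in the larger space tensored with $\overline{F_i}$ and could not form a sheaf on $Z_\chi$.

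The construction of $\Phi$ is locally dictated by the comparison between $\rsw$ and $\ch$. On an étale neighborhood of the generic point of $D_i$ with $D_i = \{t_i = 0\}$, the local refined Swan conductor has the form $\alpha_i / t_i^{m_i}$ with $\alpha_i$ a log $1$-form, and $\Phi$ should send this to the class built from a $p$-th power lift of $\alpha_i$ divided by $t_i^{pm_i}$, with an extra correction involving $d t_i/t_i$ when $p \mid m_i$. The shift from $R_\chi$ to $pR_\chi$ in the divisor twist of the target is precisely what accommodates the Frobenius on uniformizers. Checking that this prescription is independent of the choice of $t_i$ and of local trivializations is what makes $\Phi$ a well-defined sheaf morphism.

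The main obstacle I anticipate is the behavior at the singular points of $D$. At a point where two wildly ramified components $D_i$ and $D_j$ intersect, the construction of $\Phi$ must be consistent with the stalks read off from both $\frakp_i$ and $\frakp_j$; this amounts to verifying a compatibility between the log differential structure of $\Omega^1_X(\log D)$ and the Frobenius-twisted structure of $F\Omega^1_X$ across codimension-two strata. A second, more technical point is the uniform treatment of $p\mid m_i$ versus $p\nmid m_i$: one expects the divisor shift $pR_\chi$ together with the definition of $F\Omega^1_X$ to absorb the correction term into a single sheaf-level formula, but verifying this coherence explicitly — rather than case by case — is where most of the work will lie.
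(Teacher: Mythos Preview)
Your proposed sheaf morphism $\Phi\colon \Omega^1_X(\log D)(R_\chi)|_{Z_\chi}\to F\Omega^1_X(pR_\chi)|_{Z_\chi}$ carrying $\rsw(\chi)$ to $\ch(\chi)$ does not exist, and this is the heart of the matter. The comparison between the refined Swan conductor and the characteristic form goes in \emph{different directions} depending on the type of the character. For a character of type~\ref{I} (total dimension $=$ Swan conductor $+1$), Proposition~\ref{3.2} shows that $\ch$ is the image of $\rsw$ under a residue-type map, so your plan works: only the coefficient of $w\pi$ survives, and it is read off from the $d\log\pi$ coefficient of $\rsw$. But for a character of type~\ref{II} (total dimension $=$ Swan conductor), Proposition~\ref{3.3} shows that the map runs the \emph{other} way: $\rsw$ is the image of $\ch$ under the projection $\rmH_1(L_{\overline F/\calO_K})\to\Omega^1_F(\log)\otimes\overline F$, which kills precisely the $w\pi$ coefficient. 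Thus for type~\ref{II} the characteristic form carries strictly more information than the refined Swan conductor, and no map $\Phi$ can manufacture the missing $w\pi$ coefficient out of $\rsw(\chi)$. Your remark about a correction term when $p\mid m_i$ is not the relevant dichotomy; the type~\ref{I}/\ref{II} split is governed by whether $\dt=\sw+1$ or $\dt=\sw$, not by divisibility of $m_i$.

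The paper's proof of Theorem~\ref{2.1} reflects this asymmetry. For the type~\ref{I} coefficients everything does reduce directly to Kato's integrality of $\rsw$, essentially as you suggest. For the type~\ref{II} coefficient $\alpha$ of $w\pi$ one needs an additional idea: pass to the auxiliary extension $A'=A[u_2,y_l]/(u_2^p-\pi_2,\,y_l^p-x_l)$, which kills the other basis vectors of $\rmH_1$ and forces the pulled-back character to become type~\ref{I} (Lemma~\ref{4.1}); then one applies the type~\ref{I} case over $A'$ and descends via the Frobenius. The cases in which both components are type~\ref{II} (case~(f) in the proof) even require an induction on the total dimension, and the compatibility at intersection points that you flag as the main obstacle is handled by this same extension trick rather than by a direct sheaf-level comparison. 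So the overall architecture---reduce to Kato's theorem---is right, but the reduction is not via a single morphism of sheaves; it requires changing the base to convert type~\ref{II} to type~\ref{I}.
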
 

In the case where the characteristic of $K$ is $p$, these properties have already been proved by using  Artin--Schreier--Witt theory by Matsuda \cite{Ma97} and Yatagawa \cite{Ya17}. In the case where the characteristic of $K$ is zero, Artin--Schreier--Witt theory does not work, so we need to use a different method. The strategy of the proofs of Theorems~\ref{0.2} and~\ref{0.3} is to reduce to the corresponding properties of the refined Swan conductor proved by Kato \cite{Ka89}.  To do this, we compare the refined Swan conductor with the characteristic form.

The relation between the refined Swan conductor and the characteristic form is explained as follows.  Let $\map{\chi}{\Gal(L/K)}{\Q/\Z}$ be a character. The characters are divided into two types. If $\chi$ is of type~\ref{I} (for example, the residue field extension is separable), the characteristic form of $\chi$ is the image of the refined Swan conductor of $\chi$. On the other hand, if $\chi$ is of type~\ref{II} (for example, the ramification index of $L/K$ is~$1$ and the residue field extension is inseparable), the refined Swan conductor of $\chi$ is the image of the characteristic form of $\chi$. A large part of the proof of these relations is due to Saito. The author thanks him for kindly suggesting the author to include the proof in this paper. 

For a character of type~\ref{I}, Theorem~\ref{0.2} holds since the characteristic form is the image of the refined Swan conductor and the refined Swan conductor takes value in the $F$-vector space $\Omega^1_F(\log)$. For a character of type~\ref{II}, we would like to change the character to a character of type~\ref{I}. The typical case where a character is of type~\ref{I} is when the residue field $F$ is perfect. Hence we would like to take an extension $K'$ of $K$ such that the residue field of $K'$ is perfect. In fact, it suffices to consider the field $K'$ with the $\supth{p}$ power roots of a lifting of a $p$-basis of $F$, though the residue field of $K'$ may not be perfect.

As in the proof of Theorem~\ref{0.2}, we prove Theorem~\ref{0.3} using the integrality of the refined Swan conductor, but the proof is more complicated. 

In Section~\ref{sec:6}, we consider the theory of the characteristic cycle.  The characteristic cycle of an \'etale sheaf on a smooth scheme over a perfect field of positive characteristic is defined by Saito \cite{Sa17}. The characteristic cycle is defined as a cycle on the cotangent bundle. By the index formula, the intersection with the 0-section computes the Euler characteristic if the scheme is projective.  The characteristic cycle was computed on a closed subset of codimension less than $2$ by using the characteristic form. Yatagawa \cite{Ya20} gave an explicit computation of the characteristic cycle of a rank~$1$ sheaf on a scheme of dimension~$2$.

The existence of the cotangent bundle on a scheme of mixed characteristic is not known. Instead, Saito \cite{Sa22-2} defined the FW-cotangent bundle $FT^*X|_{X_F}$ of a regular noetherian scheme $X$ over a discrete valuation ring $\calO_K$ of mixed characteristic $(0,p)$ to be the vector bundle of rank $\dim X$ on the closed fiber $X_F$ to consider the characteristic cycle of an \'etale sheaf on a scheme of mixed characteristic. The characteristic cycle in the mixed characteristic case has not been defined in general. 

Let $D$ be a divisor with simple normal crossings on $X$, and let $\map{j}{U=X-D}{X}$ be the open immersion. Let $\Lambda$ be a finite field of characteristic different from $p$, and let $\calF$ be a smooth sheaf of $\Lambda$-modules of rank~$1$.  In the case $\dim X=2$, we define the F-characteristic cycle $\FCC j_!\calF$ of $j_!\calF$ as a cycle on the FW-cotangent bundle on the basis of the computation in the equal-characteristic case by Yatagawa.

On a closed subset of codimension less than $2$, we define the F-characteristic cycle using the characteristic form.  To determine the coefficients of the fibers at closed points, we use both the refined Swan conductor and the characteristic form. The main reason for using both non-log and log theories is that after successive blowups, the refined Swan conductor becomes a locally split injection but the characteristic form has no such properties.  The rationality (Theorem~\ref{0.2}) and the integrality (Theorem~\ref{0.3}) of the characteristic form are crucial to determine the coefficients of the fibers.

In analogy with the index formula, we prove that the intersection of the F-characteristic cycle with the 0-section computes the Swan conductor of cohomology of the generic fiber.

\begin{thm}[Theorem~\ref{main}]\label{0.1}
Assume $\dim X=2$ and $X$ is proper over $\calO_K$. Then we have 
\[
\left(\FCC j_!\calF-\FCC j_!\Lambda, FT^*_XX|_{X_F}\right)_{FT^*X|_{X_F}}=p\cdot\left(\Sw_K\left(X_{\overline{K}}, j_!\calF\right)-\Sw_K\left(X_{\overline{K}}, j_!\Lambda\right)\right).
\]
\end{thm}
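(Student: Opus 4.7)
The plan is to verify the identity term by term after decomposing both sides into contributions from the generic points of the irreducible components of $D\cap X_F$ and from the finitely many closed points of $X_F$ where either $D\cap X_F$ fails to be regular or the characteristic form degenerates. Since $\FCC j_!\calF$ is constructed as a cycle on $FT^*X|_{X_F}$ modeled on Yatagawa's equal-characteristic computation, its intersection with the zero section decomposes naturally into such local contributions, and a matching decomposition of the Swan conductor is available via a conductor formula.

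First, I would unpack the left-hand side. The cycle $\FCC j_!\calF$ is supported on the zero section $FT^*_X X|_{X_F}$ with multiplicity equal to the rank, on ``conormal-type'' pieces over the irreducible components of $D\cap X_F$ whose multiplicities are read off from the total dimensions $\dt(\chi|_{K_i})$ and the characteristic form, and on the full fibers $FT^*X|_x$ over finitely many closed points $x$ with multiplicities determined by a combination of the refined Swan conductor and the characteristic form after successive blowups. After subtracting the analogous cycle for $\Lambda$, the zero-section contribution cancels, so the intersection with $FT^*_X X|_{X_F}$ reduces to a sum indexed by the components of $D\cap X_F$ (computed from self-intersection numbers in $X$) and a sum over the special closed points.

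Second, I would rewrite the right-hand side using a Kato--Saito--Bloch type conductor formula expressing $\Sw_K(X_{\overline K}, j_!\calF) - \Sw_K(X_{\overline K}, j_!\Lambda)$ as a sum of local contributions of the same form: a piece at each generic point $\frakp_i$ governed by $\dt(\chi|_{K_i})$ and the self-intersection of $D_i$, and a piece at each closed point $x$ governed by finer ramification data after suitable modification. The factor $p$ on the right-hand side corresponds precisely to the $\supth{p}$ power map appearing in Theorem~\ref{0.3}, which is used to project the characteristic form from $\rmH_1(L_{F_i^{1/p}/\calO_{K_i}})$ down to $\rmH_1(L_{F_i^{1/p}/\calO_{K_i}})\otimes_{F_i^{1/p}} F_i$ when defining the F-characteristic cycle.

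Third, to match the two decompositions, I would use the explicit construction of $\FCC j_!\calF$: the multiplicities along components of $D\cap X_F$ are defined so that the intersection with the zero section reproduces the generic local terms of the conductor formula, and the multiplicities at closed points are defined precisely as the correction terms needed to match the closed-point local terms. The rationality (Theorem~\ref{0.2}) and integrality (Theorem~\ref{0.3}) of the characteristic form are essential here because they guarantee that these correction multiplicities are well-defined and globally coherent along $Z_\chi$. The main obstacle is the analysis at closed points where $D$ and the ramification of $\chi$ fail to be in good position; there the coefficient involves successive blowups after which the refined Swan conductor becomes a locally split injection, together with a combination of the refined Swan conductor and the characteristic form, and one must verify compatibility of this coefficient with the local conductor term by a careful bookkeeping under blowups, invoking the type~I/type~II dichotomy and the comparison developed in Section~\ref{sec:5}.
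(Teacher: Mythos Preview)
Your outline misses the actual mechanism of the proof. The paper does not match the F-characteristic cycle directly against a local decomposition of the Swan conductor. Instead, the argument passes through the \emph{logarithmic} characteristic cycle $\CC^{\log}j_!\calF$: one defines a morphism of vector bundles $\tau_D\colon FT^*X|_{D_F}\to \rmF^*T^*X(\log D)|_{D_F}$, computes the Gysin pullback $\tau_D^!$ of the Frobenius pullback of each line $\rmF^*L_{i,\chi}$ (Lemmas~\ref{5.3} and~\ref{5.4}, plus a Chern-class comparison in Lemma~\ref{5.5}), and obtains the identity
\[
\FCC j_!\calF-\FCC j_!\Lambda+\sum_{x}ps_x[\rmF^*T^*_xX]
=\tau_D^!\Bigl(-\rmF^*\bigl(\CC^{\log}j_!\calF-[T^*_XX(\log D)|_{D_F}]-\sum_x s_x[T^*_xX(\log D)]\bigr)\Bigr)
\]
in $\CH_2(FT^*X|_{D_F})$. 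This reduces the theorem to the Kato--Saito conductor formula (Theorem~\ref{5.1}) for $\CC^{\log}$, which is where the Swan conductor actually enters. Your plan to ``rewrite the right-hand side using a Kato--Saito--Bloch type conductor formula expressing $\Sw_K(\cdots)$ as a sum of local contributions'' is not available in the required form: Theorem~\ref{5.1} is an intersection formula on $T^*X(\log D)$, not a pointwise decomposition, so you cannot bypass the logarithmic bundle.

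Two further concrete issues. First, the factor $p$ does not come from the $\supth{p}$ power map of Theorem~\ref{0.3}; it comes from the Frobenius pullback identity $(\rmF^*L_{i,\chi},\,T^*_XX(\log D))=p\cdot(L_{i,\chi},\,T^*_XX(\log D))$, see~\eqref{5.14}. Second, your claim that ``the multiplicities at closed points are defined precisely as the correction terms needed to match the closed-point local terms'' is circular: the coefficients $t_x$ in~\eqref{5.10} are defined in terms of the numbers $s_x$, which are themselves the fiber coefficients of $\CC^{\log}j_!\calF$ obtained only after successive blowups to a clean model. So any verification must at some point confront the logarithmic characteristic cycle; the content of Lemmas~\ref{5.3}--\ref{5.5} is exactly the bookkeeping (separating type~\ref{I} and type~\ref{II} components, excess-intersection computations) that shows the definition~\eqref{5.10} of $t_x$ makes equation~\eqref{5.12} hold.
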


Abbes \cite{Ab00} found the formula computing the Swan conductor of cohomology of the generic fiber of an arithmetic surface under the assumption that a coefficient sheaf has no fierce ramification. Our formula restricts to a coefficient sheaf of rank~$1$ but needs no assumption on ramification. 

We prove Theorem~\ref{0.1} using Kato--Saito's conductor formula; see \cite{KS13}. We study the relation between the F-characteristic cycle and the pullback of the logarithmic characteristic cycle defined by Kato \cite{Ka94}. This step is similar to the computation by Yatagawa in the equal-characteristic case. 

We give an outline of the paper. In Section~\ref{sec:2}, we briefly recall the definition of the characteristic form and state the rationality and the integrality of the characteristic form explained above. In Section~\ref{sec:3}, we recall the definition and properties of the refined Swan conductor in parallel with the characteristic form. In Section~\ref{sec:4}, we give relations between the refined Swan conductor and the characteristic form. In Section~\ref{sec:5}, we prove the rationality and the integrality established in Section~\ref{sec:2} using the results in Section~\ref{sec:4}. In Section~\ref{sec:6}, we define the F-characteristic cycle of a rank~$1$ sheaf on an arithmetic surface. We prove the main theorem, which gives a formula computing the Swan conductor of cohomology of the generic fiber. We give an example of the F-characteristic cycle. 

\subsection*{Acknowledgments}
The author would like to express his sincere gratitude to his advisor Professor Takeshi Saito for suggesting the problem, giving a lot of helpful advice, and showing his unpublished book on ramification theory, which contains the contents of Section~\ref{sec:3} and the proof of Lemma~\ref{3.4} and Proposition~\ref{3.3}. The author thanks the anonymous referees for their careful reading and comments. 

\section{Characteristic form}\label{sec:2}
In this section, we recall the notion of characteristic form and state the integrality of the characteristic form.

\subsection{Cotangent complex and FW-differential}
We briefly recall the properties on cotangent complexes from \cite{Sa20}.  

Let $K$ be a discrete valuation field with valuation ring $\calO_K$ and with residue field $F$ of characteristic $p>0$. Let $E$ be a field containing $F$. 
For an element $u\in \calO_K$, we write $\overline{u}$ for the image of $u$ in $F$. If there exists a $\supth{p}$ root of $\overline{u}$ in $E$, the element $\tilde{d}u$ in $\rmH_1(L_{E/\calO_K})$ is defined in \cite[Equation~(1.9)]{Sa20}. We write $wu$ for this element instead of $\tilde{d}u$.

\begin{prop}\label{2.7}
Let $\pi$ be a uniformizer of\, $K$ and $(v_i)_{i\in I}$ be a $p$-basis of\, $F$.  Assume that the field $E$ contains $F^{1/p}$. Then, $\{w\pi, wv_i\}_{i\in I}$ forms a basis of the $E$-vector space $\rmH_1(L_{E/\calO_K})$.
\end{prop}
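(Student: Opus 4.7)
The plan is to compute $\rmH_1(L_{E/\calO_K})$ directly from the fundamental distinguished triangle of cotangent complexes applied to a two-step factorization $\calO_K \to A \to E$. First, choose lifts $\tilde v_i \in \calO_K$ of the $p$-basis elements $v_i$ and set $A = \calO_K[T_i : i\in I]/(T_i^p - \tilde v_i)$, with the map $A \to E$ sending $T_i$ to a chosen $p$-th root $v_i^{1/p} \in E\supseteq F^{1/p}$. Because $(v_i)$ is a $p$-basis of $F$, the reduction $A/\pi A$ is exactly $F^{1/p}$, so in the key case $E = F^{1/p}$ the map $A\twoheadrightarrow E$ is surjective with kernel $(\pi)$.

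Since $(T_i^p - \tilde v_i)_{i\in I}$ is a regular sequence in $\calO_K[T_i]$, the cotangent complex $L_{A/\calO_K}$ is quasi-isomorphic to $[\bigoplus_i A\cdot(T_i^p - \tilde v_i) \to \bigoplus_i A\,dT_i]$ in degrees $1$ and $0$, with differential $T_i^p - \tilde v_i \mapsto pT_i^{p-1}dT_i - d\tilde v_i = pT_i^{p-1}dT_i$ (using $\tilde v_i\in\calO_K$). Tensoring with $E$ kills both $p$ and $\pi$, so this differential vanishes and $\rmH_1(L_{A/\calO_K}\otimes^L_A E)$ is $E$-free on the classes $(T_i^p - \tilde v_i)$, while $\rmH_0 = \bigoplus_i E\,dT_i$. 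Since $\pi$ is a nonzerodivisor in $A$, the regular surjection $A\twoheadrightarrow E$ gives $L_{E/A}\simeq E\cdot\pi\,[1]$. In the long exact sequence of the triangle $L_{A/\calO_K}\otimes^L_A E \to L_{E/\calO_K} \to L_{E/A} \to$, the connecting map $E\cdot\pi \to \bigoplus_i E\,dT_i$ sends $\pi \mapsto d\pi = 0$ (as $\pi\in\calO_K$), yielding the short exact sequence
\[
0 \to \bigoplus_i E\cdot(T_i^p - \tilde v_i) \to \rmH_1(L_{E/\calO_K}) \to E\cdot\pi \to 0.
\]
This exhibits $\rmH_1(L_{E/\calO_K})$ as $E$-free of rank $|I|+1$, and unpacking the construction of $\tilde du = wu$ from \cite[(1.9)]{Sa20} identifies $wv_i$ with the class of $T_i^p - \tilde v_i$ (the obstruction that $\tilde v_i$ become a $p$-th power in $A$) and $w\pi$ with a lift of the generator $\pi \in \rmH_1(L_{E/A})$.

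The main obstacle will be the general case $E\supsetneq F^{1/p}$, in which the map $A\to E$ above is no longer surjective. I would handle it by base change from $F^{1/p}$: applying the triangle $L_{F^{1/p}/\calO_K}\otimes^L_{F^{1/p}} E \to L_{E/\calO_K} \to L_{E/F^{1/p}} \to$ and showing that any contribution from $\rmH_1(L_{E/F^{1/p}})$ is absorbed by an injective connecting map into $\rmH_0(L_{F^{1/p}/\calO_K})\otimes_{F^{1/p}} E = \bigoplus_i E\,dT_i$, whose injectivity reflects the fact that a class arising from a relation $y^p = z\in F^{1/p}$ is sent to $-dz$, a nontrivial $E$-combination of the $dT_i$ by the preceding computation. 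This forces $\rmH_1(L_{F^{1/p}/\calO_K}) \otimes_{F^{1/p}} E \xrightarrow\sim \rmH_1(L_{E/\calO_K})$, so the basis transports intact.
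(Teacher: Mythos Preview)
Your approach is correct but genuinely different from the paper's. The paper's proof is three lines: it quotes from \cite[Proposition~1.1.3(2)]{Sa20} the exact sequence
\[
0 \lra \frakm_K/\frakm_K^2 \otimes_F E \overset{w}{\lra} \rmH_1(L_{E/\calO_K}) \lra \Omega^1_F \otimes_F E \lra 0,
\]
observes that $\pi$ spans the left term and the $dv_i$ span the right, and cites \cite[Proposition~1.1.4(2)]{Sa20} for $wv_i \mapsto dv_i$. This sequence comes from the factorization $\calO_K \to F \to E$, using that $E \supseteq F^{1/p}$ forces $\rmH_1(L_{E/F}) = \Omega^1_F \otimes_F E$. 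Your argument instead factors $\calO_K \to A \to F^{1/p}$ through the auxiliary ring $A = \calO_K[T_i]/(T_i^p - \tilde v_i)$ and computes the cotangent complex of $A/\calO_K$ by hand as a two-term Koszul-type complex, obtaining a short exact sequence with the roles of $\pi$ and the $v_i$ reversed. Both routes are valid; yours is more self-contained and makes the basis elements explicit at the level of the resolution, while the paper's is much shorter because the relevant exact sequence has already been packaged in \cite{Sa20}.

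Two points to tighten. First, for general $E \supsetneq F^{1/p}$ you need both $\rmH_2(L_{E/F^{1/p}}) = 0$ (for injectivity on the left) and injectivity of the connecting map $\rmH_1(L_{E/F^{1/p}}) \to \Omega^1_{F^{1/p}/\calO_K} \otimes E$. Your justification for the latter (``a relation $y^p = z$ maps to $-dz \ne 0$'') is heuristic; a clean argument compares with the triangle over $\Fp$, where $\rmH_1(L_{E/\Fp}) = \rmH_1(L_{F^{1/p}/\Fp}) = 0$ (ind-smoothness over a perfect field) shows that $\rmH_1(L_{F^{1/p}/F}) \otimes E \to \rmH_1(L_{E/F})$ is already an isomorphism, forcing the connecting map to be injective. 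Second, once you know $\rmH_1(L_{E/\calO_K})$ has the right dimension, you still need that the specific elements $w\pi, wv_i$ form a basis; this requires actually unpacking \cite[Equation~(1.9)]{Sa20} (as you note) and checking functoriality of $w$ under $F^{1/p} \hookrightarrow E$, both routine but not automatic from the exact sequence alone.
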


\begin{proof}
By \cite[Proposition 1.1.3(2)]{Sa20}, we have an exact sequence
\[
0\lra \frakm_K/\frakm_K^2\otimes_F E\overset{w}\lra \rmH_1(L_{E/\calO_K})\lra \Omega^1_F\otimes_F E\lra 0
\]
of $E$-vector spaces, where $\frakm_K$ denotes the maximal ideal of $\calO_K$. Then $\pi$ defines a basis of $\frakm_K/\frakm_K^2\otimes_F E$, and $\{dv_i\}_{i\in I}$ forms a basis of $\Omega^1_F\otimes_FE$. The assertion follows since the map $\rmH_1(L_{E/\calO_K})\to \Omega^1_F\otimes_F E$ sends $wv_i$ to $dv_i$ by
\cite[Proposition 1.1.4(2)]{Sa20}. 
\end{proof}

Let $L$ be a finite separable extension of $K$ with residue field $E$. The morphism $\Spec E\to \Spec\calO_L\to \Spec\calO_K$ of schemes defines the distinguished triangle
$L_{\calO_L/\calO_K}\otimes_{\calO_L}^{\mathrm{L}}E\to L_{E/\calO_K}\to L_{E/\calO_L}\to$ by \cite[Proposition II.2.1.2]{Il71}. 
Since we have quasi-isomorphisms $L_{E/\calO_L}\cong N_{E/\calO_L}[1]$ and $L_{\calO_L/\calO_K}\cong\Omega^1_{\calO_L/\calO_K}[0]$ by \cite[Lemma 1.2.6(4)]{Sa20}, we have an injection 
\begin{equation}\label{2.8}
\Tor_1^{\calO_L}(\Omega^1_{\calO_L/\calO_K}, E)\lra \rmH_1(L_{E/\calO_K})
\end{equation}
of $E$-modules.

The Frobenius--Witt differential was introduced by Saito \cite{Sa22-1} to define the cotangent bundle of a scheme over $\Z_{(p)}$. The following relation between the cotangent complex and the FW-differential is known. 

\begin{prop}[\textit{cf.} {\cite[Corollary 4.12]{Sa22-1}}]\label{2.3}
Let $A$ be a local ring with residue field $k$ of characteristic $p > 0$. Let $L_{k/A}$ denote the cotangent complex for the composition $\Spec k\xrightarrow{\rmF} \Spec k \to \Spec A$, where $\rmF$ is the Frobenius. Then, the canonical morphism $F\Omega^1_A\otimes_A k\to \rmH_1(L_{k/A})$ is an isomorphism.
\end{prop}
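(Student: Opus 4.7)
The plan is to invoke \cite[Corollary 4.12]{Sa22-1} essentially directly, since this proposition is the local case of that corollary. Saito's construction defines the FW-differential module $F\Omega^1_A$ as the universal target of a Frobenius--Witt derivation, i.e., an additive map $w\colon A \to M$ satisfying the twisted Leibniz rule $w(ab) = a^p w(b) + b^p w(a)$. The canonical map in the statement is induced by the functoriality of this universal property, together with the natural interpretation of FW-derivations as classes in $\rmH_1$ of the cotangent complex of the Frobenius-twisted structure map $A\to k\xrightarrow{\rmF}k$.

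If one were to give an intrinsic proof without quoting \cite{Sa22-1}, the approach would be to apply the transitivity triangle for the factorization $A \to k \xrightarrow{\rmF} k$, obtaining
\[
L_{k/A}^{\mathrm{res}} \otimes^{\mathrm{L}}_k k \lra L_{k/A} \lra L_{k/k}^{\rmF} \lra,
\]
where $L_{k/A}^{\mathrm{res}}$ denotes the cotangent complex of the ordinary residue surjection and the tensor product is taken via Frobenius. The long exact sequence in homology, combined with the standard descriptions of the outer terms in terms of $\frakm/\frakm^2$ and $\Omega^1_k$, yields an exact sequence computing $\rmH_1(L_{k/A})$. On the other hand, the universal property of $F\Omega^1_A$ produces an analogous exact sequence involving $F\Omega^1_A \otimes_A k$. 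The comparison between the two sequences is induced by the universal FW-derivation $w$, and a five-lemma argument then delivers the isomorphism.

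The main obstacle in such an intrinsic approach would be the precise identification of the connecting homomorphism in the cotangent-complex long exact sequence with the Frobenius-twisting map entering the construction of $F\Omega^1_A$. This verification requires an explicit computation with a simplicial resolution of $k$ over $A$, which is the content of Section~4 of \cite{Sa22-1}. Since that reference carries out the computation in the generality of an arbitrary ring $A$ with quotient residue field $k$, no new input is required for the local case considered here, and the proof reduces to citing the corollary.
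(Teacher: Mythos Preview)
Your proposal is correct and matches the paper's treatment: the paper gives no proof at all for this proposition, simply recording it with the citation to \cite[Corollary~4.12]{Sa22-1}, and your proposal likewise concludes that the proof reduces to citing that corollary. Your additional sketch of the transitivity-triangle argument is accurate commentary on what lies behind the cited result, but it goes beyond anything the paper itself supplies.
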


\subsection{Characteristic form}\label{s2.2}
We briefly recall the construction of the characteristic form in \cite{Sa20}. 
Let $K$ be a henselian discrete valuation field with residue field $F$ of characteristic $p>0$.  Let $G_K$ be the absolute Galois group of $K$, and let $(G^r_K)_{r\in \Q_{>0}}$ be Abbes--Saito's non-logarithmic upper ramification filtration; see \cite[Definition 3.4]{AS02}. For an element $\chi\in\rmH^1(G_K, \Q/\Z)$, we define the {\it total dimension} $\dt\chi$ to be the smallest rational number $r$ satisfying $\chi(G^{s}_K)=0$ for all $s>r$. The total dimension is an integer by \cite[Theorem 4.3.5]{Xi12} and \cite[Theorem 4.3.1]{Sa20}.

We fix some notation. Let $L$ be a finite separable extension of $K$, and let $K'$ be a separable extension of $K$ of ramification index $e$. Let $E,F'$ be the residue fields of $L, K'$, respectively. Let $S, S', T$ be the spectra of the valuation rings $\calO_K, \calO_{K'}, \calO_L$, respectively. Take a closed immersion $T\to P$ to a smooth scheme over~$S$. For a rational number $r>0$ such that $er$ is an integer, we define the scheme $P^{[r]}_{S'}$ to be the dilatation $P^{[D_{r}\cdot T_{S'}]}$ of $P_{S'}=P\times_{S}{S'}$ with respect to the Cartier divisor $D_r$ defined by $\frakm^{er}_{K'}$ and the closed subscheme $T_{S'}=T\times_SS'$. (See \cite[Definition 3.1.1]{Sa20} for the definition of the dilatation.) Let $P^{(r)}_{S'}$ be the normalization of $P^{[r]}_{S'}$. Let $P^{[r]}_{F'}$ and $P^{(r)}_{F'}$ be the closed fibers of $P^{[r]}_{S'}$ and $P^{(r)}_{S'}$, respectively.

For an immersion $T\to P$ to a smooth scheme over $S$, we have an exact sequence 
\begin{equation}\label{2.10}
0\lra N_{T/P}\lra \Omega^1_{P/S}\otimes_{\calO_P}\calO_T\lra \Omega^1_{T/S}\lra 0
\end{equation}
of $\calO_L$-modules. 
We say that an immersion $T\to P$ to a smooth scheme over $S$ is {\it minimal} if the map
\begin{equation}\label{2.6}
\Tor_1^{\calO_L}(\Omega^1_{T/S}, E)\lra N_{T/P}\otimes_{\calO_L}E
\end{equation}
induced by \eqref{2.10} is an isomorphism. There exists a minimal immersion by \cite[Lemma 1.2.3(1)]{Sa20}.

Let $L/K$ be a finite Galois extension, and let $G=\Gal(L/K)$ be the Galois group. Let $r>1$ be a rational number such that $G^{r+}=\cup_{s>r}G^s=1$. By the reduced fiber theorem, see \cite{BLR}, there exists a finite separable extension $K'$ of $K$ of ramification index $e$ such that $er$ is an integer and the geometric closed fiber $P_{S'}^{(r)}\times_{S'}\overline{F}$ is reduced, where $\overline{F}$ denotes an algebraic closure of $F'$. We define the scheme $\Theta_{L/K, F'}^{(r)}$ to be the vector bundle 
 $\Hom_{F}(\frakm^{er}_{K'}/\frakm^{er+1}_{K'}, \Tor_1^{\calO_L}(\Omega^1_{\calO_L/\calO_K}, E))^\vee$ over $\Spec (E\otimes_{F}F')_{\red}$. If we take a minimal immersion $T\to P$ to a smooth scheme over $S$, the isomorphism \eqref{2.6} induces an isomorphism $P_{F',\red}^{[r]}\to\Theta_{L/K, F'}^{(r)}$ by \cite[Proposition 3.1.3(2)]{Sa20}. We define the scheme $\Phi_{L/K, F'}^{(r)}$ to be $P_{F'}^{(r)}$. The definition does not depend on the choice of a minimal immersion $T\to P$, by \cite[Lemma 3.3.7]{Sa20}.  
 
We fix a morphism $\map{i_0}{L}{K_s}$ to a separable closure of $K$. Let $\overline{T}_{S'}$ be the normalization of $T\times_{S} S'$ and $\overline{T}_{\overline{F}}=\overline{T}_{S'}\times_{S'} \Spec \overline{F}$. Then the morphism $i_0$ can be regarded as a point of $\overline{T}_{\overline{F}}=\Mor_K(L, K_s)$. We have the cartesian diagram
\[
\xymatrix{
\overline{T}_{\overline{F}}\ar[r]\ar[d]& T_{\overline{F}}\ar[d]\\
 \Phi_{L/K,\overline{F}}^{(r)}\ar[r]& \Theta_{L/K,\overline{F}}^{(r)}
}
\]
by \cite[Lemma 3.3.7]{Sa20}.

Let $\Theta_{L/K, \overline{F}}^{(r)\circ}$ and $\Phi_{L/K,\overline{F}}^{(r)\circ}$ denote the connected component of $\Theta_{L/K, \overline{F}}^{(r)}$ and $\Phi_{L/K,\overline{F}}^{(r)}$, respectively,  containing the image of the closed point of $\overline{T}_{\overline{F}}$ corresponding to $i_0$. 
Then $\Phi_{L/K,\overline{F}}^{(r)\circ}$ is an additive $G^r$-torsor over $\Theta_{L/K, \overline{F}}^{(r)\circ}$ by \cite[Theorem 4.3.3(1)]{Sa20} in the sense of \cite[Definition 2.1.4(1)]{Sa20}. By \cite[Proposition 2.1.6]{Sa20}, there exists a group scheme structure on $\Phi_{L/K,\overline{F}}^{(r)\circ}$ such that 
\[
0\lra G^r\lra \Phi_{L/K,\overline{F}}^{(r)\circ}\lra \Theta_{L/K,\overline{F}}^{(r)\circ}\lra 0
\]
is an extension of smooth group schemes. We define the map  
\[
[\Phi]\colon\Hom(G^r,\Fp)\lra\rmH^1(\Theta_{L/K,\overline{F}}^{(r)\circ}, \Fp)
\]
sending a character $\chi$ to the image $\chi_*[\Phi_{L/K,\overline{F}}^{(r)\circ}]$ of $[\Phi_{L/K,\overline{F}}^{(r)\circ}]$ by $\map{\chi_*}{\rmH^1(\Theta_{L/K,\overline{F}}^{(r)\circ}, G^r)}{\rmH^1(\Theta_{L/K,\overline{F}}^{(r)\circ}, \Fp)}$. 
By \cite[Proposition 2.1.6]{Sa20}, the morphism $[\Phi]$ is an injection, and the image of $[\Phi]$ is contained in $\Ext(\Theta_{L/K,\overline{F}}^{(r)\circ}, \Fp)$.

Let $\frakm_{K_s}^r$ be the ideal $\set{x\in K_s}{\ord_Kx\ge r}$, and let $\frakm_{K_s}^{r+}$ be the ideal $\set{x\in K_s}{\ord_Kx> r}$ for the extension of the valuation of $K$ to $K_s$. 
If we identify $\Ext(\Theta_{L/K,\overline{F}}^{(r)\circ}, \Fp)$ with $({\Theta_{L/K,\overline{F}}^{(r)\circ}})^\vee=\Hom_{\overline{F}}(\frakm^{r}_{K_s}/\frakm^{r+}_{K_s}, \Tor_1^{\calO_L}(\Omega^1_{T/S}, \overline{F}))$ by the isomorphism \cite[Equation~(2..1)]{Sa20}, we have a commutative diagram 
\begin{equation}
\vcenter{
\xymatrix{
0\ar[r]& G^r\ar[r]\ar[d]^-{\chi}& \Phi_{L/K,\overline{F}}^{(r)\circ}\ar[r]\ar[d]& \Theta_{L/K,\overline{F}}^{(r)\circ}\ar[r]\ar[d]^-{[\Phi](\chi)}& 0\\
0\ar[r]& \Fp\ar[r]& \mathbf{G}_a\ar[r]& \mathbf{G}_a\ar[r]& 0}
}
\end{equation}
of extensions of smooth group schemes, where the lower extension is the Artin--Schreier extension. 

We define the {\it characteristic form} to be the composition of injections 
\[
\ch \colon \Hom\left(G^r, \Fp\right)\overset{[\Phi]}\lra \Hom_{\overline{F}}\left(\frakm^{r}_{K_s}/\frakm^{r+}_{K_s}, \Tor_1^{\calO_L}\left(\Omega^1_{T/S}, \overline{F}\right)\right)\lra \Hom_{\overline{F}}\left(\frakm^{r}_{K_s}/\frakm^{r+}_{K_s}, \rmH_1\left(L_{\overline{F}/\calO_K}\right)\right), 
\]
where the second morphism is induced by the injection \eqref{2.8}. 
For a character $\map{\chi}{G^r}{\Fp}$, we call $\ch\chi$ the {\it characteristic form} of $\chi$. 

We now state the rationality of the characteristic form. 

\begin{thm}\label{2.2}
Let $\chi\in \rmH^1(K,\Q/\Z)$ be a character of total dimension $m$. Then the image of the characteristic form $\map{\ch\chi}{\frakm^m_{K}/\frakm^{m+1}_{K}}{\rmH_1(L_{\overline{F}/\calO_K})=\rmH_1(L_{F^{1/p}/\calO_K}})\otimes_{F^{1/p}}\overline{F}$ of $\chi$ is contained in $\rmH_1(L_{F^{1/p}/\calO_K})$. 
\end{thm}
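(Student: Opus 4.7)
The plan is to exploit the comparison between the characteristic form and Kato's refined Swan conductor that will be established in Section~\ref{sec:4}, together with the dichotomy of characters into type~\ref{I} and type~\ref{II}, and so reduce Theorem~\ref{2.2} to Kato's integrality of the refined Swan conductor \cite{Ka89}.

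First I would treat the type~\ref{I} case. Here the comparison identifies $\ch\chi$ as the image of the refined Swan conductor $\rsw\chi\in\Omega^1_F(\log)$ under the canonical map $\Omega^1_F(\log)\to \rmH_1(L_{\overline{F}/\calO_K})$ that sends $d\log\pi\mapsto w\pi$ and $dv_i\mapsto wv_i$ for a $p$-basis $(v_i)$ of $F$. Since $\Omega^1_F(\log)$ is an $F$-vector space and the identification of Proposition~\ref{2.7} realizes $\rmH_1(L_{F^{1/p}/\calO_K})$ as the free $F^{1/p}$-module on $\{w\pi, wv_i\}$, this map already factors through $\rmH_1(L_{F^{1/p}/\calO_K})$, and the rationality claim follows at once.

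For a character $\chi$ of type~\ref{II}, I would eliminate the inseparability of the residue extension by an unramified base change. Let $(u_i)\subset\calO_K$ be a lifting of a $p$-basis $(v_i)$ of $F$, and let $K'=K(u_i^{1/p})_{i\in I}$ be the henselian discrete valuation field obtained by adjoining all $p$-th roots. Then $K'$ has residue field $F^{1/p}$, and the restriction $\chi|_{K'}$ becomes a character of type~\ref{I}, to which the previous step applies. I would then invoke functoriality of the characteristic form with respect to $K\to K'$: the pullback $\rmH_1(L_{\overline{F}/\calO_K})\to\rmH_1(L_{\overline{F}/\calO_{K'}})$ translates $\ch\chi$ into data governed by $\ch(\chi|_{K'})$, allowing rationality on the $K'$ side to be converted into an $F^{1/p}$-rationality statement for $\ch\chi$.

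The main obstacle is this last descent step. One has to verify that the functoriality of the dilatation-based construction of $\Phi_{L/K,\overline{F}}^{(r)\circ}$ under $K\to K'$ interacts well with the $p$-basis description of $\rmH_1$, and that the rationality over $(F^{1/p})^{1/p}$ which comes from applying the type~\ref{I} case to $\chi|_{K'}$ in fact restricts to rationality over $F^{1/p}$ for the coefficients of $\ch\chi$ in the canonical basis $\{w\pi, wv_i\}$. This will require careful bookkeeping with the injectivity of $[\Phi]$ and with how the extension of $G^r$ by $\Theta_{L/K,\overline{F}}^{(r)\circ}$ varies under the base change, and it is here that the bulk of the argument is expected to lie.
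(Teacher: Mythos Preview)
Your overall strategy---split into types and pass to $K'=K(u_i^{1/p})$ in type~\ref{II}---matches the paper's, but two concrete points in your plan do not work as written.

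First, in the type~\ref{II} case your base-change argument can only control the coefficient of $w\pi$, not the coefficients $\beta_i$ of $wv_i$. Under the functoriality map $\rmH_1(L_{\overline{F}/\calO_K})\to\rmH_1(L_{\overline{F}/\calO_{K'}})$, each $wv_i$ is sent to $0$ (since $v_i$ becomes a $p$-th power in $\calO_{K'}$), so $\ch(\chi|_{K'})$ retains no information about the $\beta_i$. The paper fills this gap by using the \emph{other} comparison first: for type~\ref{II} one has that $\rsw\chi$ is the image of $\ch\chi$ under $\rmH_1(L_{\overline{F}/\calO_K})\to\Omega^1_F(\log)\otimes\overline{F}$ (Proposition~\ref{3.3}), which sends $w\pi\mapsto 0$ and $wv_i\mapsto dv_i$. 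Thus $\rsw\chi=(\sum_i\beta_i\,dv_i)/\pi^m$, and Kato's rationality of $\rsw$ gives $\beta_i\in F$ directly. Only the remaining coefficient $\alpha$ of $w\pi$ requires the passage to $K'$.

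Second, your description of the type~\ref{I} comparison is not the one that actually holds, and this matters for the descent you worry about. The map in Proposition~\ref{3.2} is the composite $\Omega^1_F(\log)\xrightarrow{\res}F\hookrightarrow\rmH_1(L_{\overline{F}/\calO_K})\otimes\frakm_K^{-1}$ (via $1\mapsto w\pi\otimes\pi^{-1}$); it \emph{kills} $dv_i$ rather than sending it to $wv_i$. Consequently, in type~\ref{I} one gets $\ch\chi=(\alpha\,w\pi)/\pi^m$ with $\alpha\in F$, not merely $\alpha\in F^{1/p}$. Applying this to $\chi|_{K'}$ (once one shows it is of type~\ref{I}, which the paper does by observing that otherwise $\rsw(\chi|_{K'})=0$, a contradiction) yields $\alpha\in F'=F^{1/p}$ immediately---there is no descent from $F^{1/p^2}$ to $F^{1/p}$ to perform.
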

We give a proof of Theorem~\ref{2.2} in Section~\ref{sec:5}.

\begin{remark}\label{2.11}
We have an example where the image of the characteristic form is not contained in $\rmH_1(L_{{F/\calO_K}})$ when we assume that the characteristic of $F$ is 2. Consider the Kummer character $\chi$ defined by $t^2=1+\pi^{2(e-1)}u$, where $\pi$ is a uniformizer of $K$, $e=\ord_K2$ and $u\in \calO_K$ is such that $\sqrt{\overline{u}}\notin F$.
Then, the computation in \cite[Lemma 3.2.5.3]{Sa22-2} shows that we have  
\[
\ch\chi=\frac{wu-\sqrt{\overline{u}}\cdot w(2/\pi^{e-1})}{\pi^2} \in \rmH_1\left(L_{F^{1/2}/\calO_K}\right)\otimes_F \frakm_K^{-2}/\frakm_K^{-1}. 
\]

When the characteristic of the residue field is not 2, we can expect from the results in  equal characteristic, see \cite[Proposition 3.2.3]{Ma97} and \cite[Proposition 1.17]{Ya17}, that  the image of the characteristic form is contained in 
$\rmH_1(L_{F/\calO_K})$, but the author does not know how to prove. 
\end{remark}

We now state the integrality of the characteristic form. In this article, we define the local field of a ring $A$ at a prime ideal $\frakp$ as the fraction field of the completion of the localization $A_{\frakp}$. We note that the residue field may not be perfect. 

\begin{thm}\label{2.1}
Let $A$ be an excellent regular local ring of dimension $d$ with fraction field $K$ and with residue field $k$ of characteristic $p > 0$. We assume $c=[k:k^p]<\infty$ and fix a lifting $(x_l)_{l=1,\dots,c}$ of a p-basis of\, $k$ to $A$. Let $(\pi_i)_{i=1,\dots, d}$ be a regular system of parameters of\, $A$, and let $K_i$ be the local field at the prime ideal generated by $\pi_i$. 
We fix an integer $r$ satisfying $1\le r\le d$. 
Let $D_i$ be the divisor on $X=\Spec A$ defined by $\pi_i$, and let $U$ be the complement of $D=\cup_{i=1}^rD_i$. Let $\chi$ be an element of\, $\rmH^1(U, \Q/\Z)$, and let $\chi|_{K_i}$ be the pullback of $\chi$ by $\Spec K_i\to U$. We put $m_i = \dt(\chi|_{K_i})$. By Proposition $\ref{2.7}$ and Theorem $\ref{2.2}$, we may write
\[
\ch\left(\chi|_{K_j}\right)=\left(\sum_{1\le i\le d}\alpha_{i,j}w\pi_i+\sum_{1\le l\le c}\beta_{l,j}wx_l\right)/\pi_1^{m_1}\cdots \pi_r^{m_r}
\]
with $\alpha_{i,j},\beta_{l,j}\in {\Frac(A/\pi_j)}^{1/p}$ for $1\le i\le d$, $1\le j\le r$ satisfying $m_j \ge2$, and $1\le l \le c$. Then, the following properties hold:
\begin{enumerate}
\item We have $\alpha_{i,j}^p, \beta_{l,j}\in A/\pi_j$. 
\item For integers $j,j'$ satisfying $1\le j,j'\le r$, the images of $\alpha_{i,j}^p$ and $\alpha_{i,j'}^p$ in $A/(\pi_j)+(\pi_{j'})$ are equal for each $i$, and the images of $\beta_{l,j}$ and $\beta_{l,j'}$ in $A/(\pi_j)+(\pi_{j'})$ are equal for each $l$. 
\end{enumerate}
\end{thm}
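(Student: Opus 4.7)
The plan is to reduce the integrality of the characteristic form to the corresponding integrality of Kato's refined Swan conductor from \cite{Ka89} through the comparison between $\ch$ and $\rsw$ that will be established in Section~\ref{sec:4}. Kato shows that $\rsw(\chi|_{K_j})$, once expanded in a basis of $\Omega^1_{F_j}(\log)$ adapted to the regular system of parameters $(\pi_i)$ and the $p$-basis $(x_l)$, has coefficients in $A/\pi_j$ that satisfy a compatibility on $D_j\cap D_{j'}$ parallel to~(2). The task is to transfer this integrality along the comparison maps.

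For part~(1), I would argue by case analysis on the ramification type of $\chi|_{K_j}$. If $\chi|_{K_j}$ is of type~\upI, then $\ch(\chi|_{K_j})$ is the image of $\rsw(\chi|_{K_j})$ under a natural map from $\Omega^1_{F_j}(\log)$ to $\rmH_1(L_{F_j^{1/p}/\calO_{K_j}})$ that carries the integral lattice to the integral lattice, and part~(1) follows immediately from Kato's integrality. If $\chi|_{K_j}$ is of type~\II, the comparison is reversed and $\rsw(\chi|_{K_j})$ is the image of $\ch(\chi|_{K_j})$. In this case, I would follow the strategy of Theorem~\ref{2.2} and base change to the ring $A'$ obtained from $A$ by adjoining $p$-th power roots of the liftings $x_l$; over $A'$ the character becomes of type~\upI, its characteristic form is an explicit expression in $\ch(\chi|_{K_j})$, and the previous case applies. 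Descending back to $A$ then yields $\alpha_{i,j}^p,\beta_{l,j}\in A/\pi_j$, where the $p$-th power on $\alpha_{i,j}$ reflects the Frobenius twist intrinsic to the FW-differential (Proposition~\ref{2.3}).

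For part~(2), the compatibility on $A/(\pi_j,\pi_{j'})$ is built into Kato's global construction of the refined Swan conductor as a section of a sheaf of log differentials on $Z_\chi$. Once one verifies that the comparison morphisms of Section~\ref{sec:4} commute with localization-completion at the generic points of the components of $D$ and with restriction to $D_j\cap D_{j'}$, the required compatibility of $\alpha_{i,j}^p$ and $\beta_{l,j}$ follows from the analogous compatibility of the refined Swan conductor coefficients. In particular, both the $p$-th power map $F_j^{1/p}\to F_j$ and the passage from $\rmH_1$ to $\Omega^1(\log)$ are compatible with restriction to a divisor, which is what makes the descent from $\rsw$ to $\ch$ work globally.

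The hard part will be the interaction between the type~\II case and the global compatibility in~(2): the auxiliary base change $A\to A'$ used at the generic point of $D_j$ must be performed compatibly as $j$ varies, and the types of $\chi|_{K_j}$ can differ along different components of $D$, forcing one to mix the two relations of Section~\ref{sec:4} inside a single argument. A further subtlety is the asymmetric appearance of $\alpha_{i,j}^p$ versus $\beta_{l,j}$, which must be tracked carefully through the comparison between the FW-basis $\{w\pi_i,wx_l\}$ of $\rmH_1(L_{F_j^{1/p}/\calO_{K_j}})$ and the logarithmic basis of $\Omega^1_{F_j}(\log)$.
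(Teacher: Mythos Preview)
Your overall strategy matches the paper's, but there are three concrete gaps.

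First, you never reduce to $\dim A=2$. The paper does this at the outset: since $A/\pi_j$ is regular it suffices to check the claim after localizing at each height~$1$ prime of $A/\pi_j$, which brings you to a $2$-dimensional regular local ring. Without this reduction the case analysis becomes unmanageable.

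Second, your auxiliary ring $A'$ is wrong. You propose adjoining $p$-th roots only of the $x_l$, but the residue field $F_j=\Frac(A/\pi_j)$ has $p$-basis given by the images of \emph{all} the $\pi_i$ with $i\neq j$ together with the $x_l$. To force $\chi'|_{K'_j}$ to type~\upI\ as in Theorem~\ref{2.2} you must adjoin $p$-th roots of the other parameters as well. In dimension~$2$ this is the ring $A'=A[u_2,y_l]/(u_2^p-\pi_2,\,y_l^p-x_l)$ of Lemma~\ref{4.1}, and the root $u_2$ of $\pi_2$ is essential: it changes the ramification index at $K_2$ and drives the Swan conductor estimates used later.

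Third, and most seriously, your claim that ``over $A'$ the character becomes of type~\upI'' is only half right. After base change, $\chi'|_{K'_1}$ becomes type~\upI\ (when the coefficient of $w\pi_1$ is nonzero), but $\chi'|_{K'_2}$ can remain type~\II\ (Lemma~\ref{4.1}\eqref{4.1-2}). So if both $\chi|_{K_1}$ and $\chi|_{K_2}$ start as type~\II, one pass to $A'$ lands you in a mixed case (one type~\upI, one type~\II), not in the pure type~\upI\ case. The paper handles this by a six-fold case analysis on the pair of types, and the hardest case (both type~\II, with $\alpha_{1,1}=0$ and $\alpha_{1,2}\neq 0$) requires an \emph{induction on $\dt(\chi|_{K_1})$}: after base change the total dimension drops, and one reapplies the argument to $(A',\chi')$. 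Your proposal does not anticipate this induction, and without it the type~\II/type~\II case for part~(2) does not close.
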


We give a proof of Theorem~\ref{2.1} in Section~\ref{sec:5}.

\section{Refined Swan conductor}\label{sec:3}
In this section, we recall the notion of refined Swan conductor. The refined Swan conductor was defined by Kato \cite{Ka89} as an injection from the dual of the graded quotients to twisted cotangent spaces with logarithmic poles. Using Abbes--Saito's (logarithmic) ramification theory, see \cite{AS02}, Saito \cite{Sa12} defined another injection from the dual of the graded quotients to twisted cotangent spaces with logarithmic poles. The coincidence of these two notions of refined Swan conductor is verified by Kato and Saito; see \cite[Theorem 1.5]{KS19}. In this paper, we use the definition by Saito, but we slightly change the construction to compare with the characteristic form. The construction here is also given by Saito. 

The content of this section is based on Saito's unpublished book on
ramification theory.

\subsection{Some preliminaries}
Let $K$ be a discrete valuation field with valuation ring $\calO_K$ and with residue field $F$ of characteristic $p$. 
Let~$L$ be an extension of $K$ with valuation ring $\calO_L$ and with residue field $E$.

\begin{definition}\leavevmode  
\begin{enumerate}
\item We say that a scheme $Q$ locally of finite type over $S=\Spec \calO_K$ is log smooth over $S$ if the following conditions are satisfied:
\begin{itemize}
\item The scheme $Q$ is regular and flat over $S$, and the generic fiber $Q_K$ is smooth over $K$. 
\item The reduced closed fiber $D=Q_{F,\red}$ is smooth over $F$. 
\item For every point $x\in D$ where the multiplicity $m$ of $D$ in $Q_F$ is divisible $p$, there exist an open neighborhood $U$ of $x$ and a smooth morphism 
\[
U\lra \Spec \calO_K[x,u^{\pm 1}]/(ux^m-\pi)
\]
over $S$ where $\pi$ denotes a uniformizer of $\calO_K$. 
\end{itemize}

\item  Let $Q$ be a log smooth scheme over $S$. We say that an immersion $T=\Spec \calO_L\to Q$ over $S$ is exact if the inverse image $T\times_Q Q_{F,\red}$ is equal to $\Spec E$. 
\end{enumerate}
\end{definition}

Let $Q$ be a log smooth scheme over $S$. Let $D$ be the reduced closed fiber $Q_{F,\red}$. 
We introduce the following notation:
\[
\Omega^1_Q(\log)=\Omega^1_Q(\log D),
\]
\[
\Omega^1_{Q/S}(\log/\log)=\Coker\left(\Omega^1_S(\log)\otimes_{\calO_S}\calO_Q\lra \Omega^1_Q(\log)\right). 
\]
The $\calO_Q$-module $\Omega^1_Q(\log)$ is locally free. By checking the case  $Q=\Spec \calO_K[x,u^{\pm 1}]/(ux^m-\pi)$, we can also prove that $\Omega^1_S(\log)\otimes_{\calO_S}\calO_Q\to \Omega^1_Q(\log)$ is a locally splitting injection and $\Omega^1_{Q/S}(\log/\log)$ is a locally free $\calO_Q$-module.

For an exact immersion $T=\Spec \calO_L\to Q$ to a log smooth scheme over $S$, we have an exact sequence 
\begin{equation}\label{1.7}
0\lra N_{T/Q}\lra \Omega^1_{Q/S}(\log/\log)\otimes_{\calO_Q}\calO_L\lra \Omega^1_{T/S}(\log/\log)\lra 0
\end{equation}
of locally free $\calO_L$-modules. Here, the first arrow $N_{T/Q}\to \Omega^1_{Q/S}(\log/\log)\otimes_{\calO_Q}\calO_L$ is injective since the map $N_{T/Q}\otimes_{\calO_L} L\to \Omega^1_{Q/S}(\log/\log)\otimes_{\calO_Q} L$ of $L$-vector spaces is injective.

We say that an exact immersion $T\to Q$ to a log smooth scheme over $S$ is {\it minimal} if the map
\begin{equation}\label{1.6}
\Tor_1^{\calO_L}\left(\Omega^1_{T/S}(\log/\log), E\right)\lra N_{T/Q}\otimes_{\calO_L}E
\end{equation}
induced by \eqref{1.7} is an isomorphism.

\begin{lemma}[\textit{cf.} {\cite[Lemma 1.2.3(1)]{Sa20}}]\label{1.2}
There exists a minimal exact immersion $T\to Q$ to a log smooth scheme over $S$.
\end{lemma}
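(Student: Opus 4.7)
The plan is to mimic the non-log argument of \cite[Lemma~1.2.3(1)]{Sa20}: first exhibit some exact immersion $T\to Q_0$ to a log smooth $S$-scheme, and then successively cut $Q_0$ by transversal hypersurfaces, preserving log smoothness and exactness, until the map \eqref{1.6} becomes an isomorphism.

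For the initial step, let $e$ be the ramification index of $L/K$, fix a uniformizer $\pi_L\in\calO_L$, write $\pi_K=u\pi_L^e$ with $u\in\calO_L^\times$, and choose lifts $\bar x_2,\ldots,\bar x_n\in\calO_L$ of a set of generators of $E$ over $F$. The assignment $X_1\mapsto\pi_L$, $U\mapsto u$, $X_i\mapsto\bar x_i$ for $i\ge 2$ defines a surjection $\calO_K[X_1,\ldots,X_n,U^{\pm1}]/(UX_1^e-\pi_K)\twoheadrightarrow\calO_L$; base changing the associated closed immersion $T\to Q_0$ along $(Q_0)_{F,\red}\to Q_0$ yields $\Spec(\calO_L/\pi_L\calO_L)=\Spec E$, so the immersion is exact, and $Q_0$ is log smooth over $S$ by the local model in the definition (the condition on $p$-divisible multiplicities being vacuous when $p\nmid e$).

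For the reduction step, suppose $T\to Q$ is an exact immersion with $Q$ log smooth but \eqref{1.6} not yet an isomorphism. Since both $N_{T/Q}$ and $\Omega^1_{Q/S}(\log/\log)\otimes_{\calO_Q}\calO_L$ are locally free $\calO_L$-modules, the Tor long exact sequence attached to \eqref{1.7} identifies the failure of \eqref{1.6} to be an isomorphism with the failure of the surjection $\Omega^1_{Q/S}(\log/\log)\otimes_{\calO_Q}E\twoheadrightarrow\Omega^1_{T/S}(\log/\log)\otimes_{\calO_L}E$ to be injective. Picking a nonzero kernel element at a point $x\in T\cap Q_{F,\red}$, lifting to a local section $f$ of the ideal sheaf $I_T\subset\calO_Q$, and cutting $Q$ by $V(f)$ produces a closed subscheme of strictly smaller relative dimension over $S$ that still contains $T$ as an exact immersion; iteration terminates in finitely many steps at a minimal exact immersion.

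The main obstacle, absent in the non-log argument, is arranging that each cut $V(f)$ remains log smooth over $S$. Via the \'etale-local model $Q\to\Spec\calO_K[X_1,\ldots,X_n,U^{\pm1}]/(UX_1^e-\pi_K)$, the module $\Omega^1_{Q/S}(\log/\log)$ admits the local basis $d\log X_1, dX_2,\ldots,dX_n$, and $V(f)$ inherits the local log smooth model precisely when $df$ is transversal to the logarithmic direction $d\log X_1$, i.e.\ when $df$ has a unit $dX_i$-component for some $i\ge 2$. I would secure such a choice by exploiting the freedom in the lifts $\bar x_i$ and, if necessary, by enlarging the generating set of $E$ over $F$ so that the kernel of $\Omega^1_{Q/S}(\log/\log)\otimes E\to\Omega^1_{T/S}(\log/\log)\otimes E$ contains an element in the span of $dX_2,\ldots,dX_n$; this linear-algebraic arrangement of the lifts, which hinges on the image of $d\log\pi_L$ in $\Omega^1_{T/S}(\log/\log)\otimes E$, is the step requiring the most care.
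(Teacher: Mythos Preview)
Your strategy matches the paper's: start from the same explicit exact immersion into $Q_0=\Spec\calO_K[X_1,\dots,X_n,U^{\pm1}]/(UX_1^e-\pi_K)$ and cut down by sections of $I_T$ whose differentials are nonzero in $\Omega^1_{Q_0/S}(\log/\log)\otimes E$. The paper does all cuts at once (choosing $f_1,\dots,f_s\in I_T$ lifting a basis of the image of $N_{T/Q_0}\otimes E\to\Omega^1_{Q_0/S}(\log/\log)\otimes E$) rather than iterating, but that is cosmetic.

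The substantive difference is that your ``main obstacle'' is not one, and the fix you propose is unnecessary. The condition you actually need for $V(f)$ to be log smooth near $T$ is precisely $df\neq0$ in $\Omega^1_{Q_0/S}(\log/\log)\otimes E$, with no constraint on the direction. The reason splits according to whether $p\mid e$:
\begin{itemize}
\item If $p\mid e$, then in $\Omega^1_{Q_0/S}(\log/\log)\otimes E$ one has $dX_1=X_1\,d\log X_1=0$ and $dU=U\,d\log U=-eU\,d\log X_1=0$. Hence for any regular $f$, the class $df|_E$ lies automatically in the span of $dX_2,\dots,dX_n$, and your transversality to $d\log X_1$ is free; the projection $V(f_1,\dots,f_s)\to\Spec\calO_K[X_1,U^{\pm1}]/(UX_1^e-\pi_K)$ is smooth near $T$.
\item If $p\nmid e$, the local-model clause in the definition of log smoothness is vacuous (it only applies where $p$ divides the multiplicity). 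So one only has to check regularity, flatness over $S$, smoothness of the generic fiber, and smoothness of the reduced closed fiber; all of these follow near $T$ from $df\neq0$ in $\Omega^1_{Q_0/S}(\log/\log)\otimes E$, even when $df$ points in the $d\log X_1$ direction via the $dU$ term.
\end{itemize}
So you can drop the paragraph about rearranging lifts and simply assert, as the paper does, that $V(f_1,\dots,f_s)$ is log smooth on a neighborhood of $T$.
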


\begin{proof}
Let $\pi$ be a uniformizer of $K$ and $m$ be the ramification index of $L/K$. 
Take a system of generators $a_1,\dots, a_n\in \calO_L$ over $\calO_K$, and put $ua_1^m=\pi$ with $u\in \calO_L^{\times}$. We define an exact closed immersion 
\[
T=\Spec\calO_L\lra Q'=\Spec\calO_K[X_1,\dots, X_n, U^{\pm 1}]/(UX_1^m-\pi)
\]
to a log smooth scheme sending $X_1,\dots X_n, U$ to $a_1,\dots, a_n, u$. Let $I$ be the kernel of the map 
\[
\calO_K[X_1,\dots, X_n, U^{\pm 1}]/(UX_1^m-\pi)\lra \calO_L.
\]
Take a lifting $f_1,\dots, f_s\in I$ of a basis of the image of $N_{T/Q'}\otimes_{\calO_L}E\to\Omega^1_{Q'/S}(\log/\log)\otimes_{\calO_{Q'}}E$. Then the closed subscheme $Q$ of $Q'$ defined by the ideal $(f_1,\dots, f_s)$ is log smooth over $S$ on a neighborhood of $T$. We show that the immersion $T\to Q$ is minimal. 

The construction of $Q$ shows that $N_{T/Q}\otimes_{\calO_L} E\to \Omega^1_{Q/S}(\log/\log)\otimes_{\calO_Q}E$ is a zero map. Hence the exact sequence \eqref{1.7} induces the isomorphism $\Tor_1^{\calO_L}(\Omega^1_{T/S}(\log/\log), E)\to N_{T/Q}\otimes_{\calO_L}E$. 
\end{proof}

We fix an exact immersion $T=\Spec \calO_L\to Q$ to a log smooth scheme over $S$. 
We have the following commutative diagram: 
\[
\xymatrix{
0\ar[d]&&&0\ar[d]\\
\Tor_1^{\calO_L}\left(\Omega^1_{T/S}(\log/\log), E\right)\ar[d]\ar@{-->}[rrr]&&& \Omega^1_F(\log)\otimes_{F}E\ar[d]\\
N_{T/Q}\otimes_{\calO_Q} E\ar[r]^-{\cong}\ar[d]&N_{E/D}\ar[r]&\Omega^1_D\otimes_{\calO_D} E\ar[r]&\Omega^1_Q(\log)\otimes_{\calO_D} E \ar[d]\\
\Omega^1_{Q/S}(\log/\log)\otimes_{\calO_Q}E\ar@{=}[rrr]&&&\Omega^1_{Q/S}(\log/\log)\otimes_{\calO_Q}E\rlap{,}
}
\]
where the left vertical sequence is obtained from \eqref{1.7}. Since the vertical sequences are exact, we get the morphism
\begin{equation}\label{1.10}
\Tor_1^{\calO_L}\left(\Omega^1_{T/S}(\log/\log), E\right)\lra \Omega^1_F(\log)\otimes_{F}E.
\end{equation}
Let us prove that this morphism is independent of the choice of exact immersions $T\to Q$. If we take two exact immersions $T\to Q$, $T\to Q'$, we may assume there exists a morphism $Q\to Q'$ such that the diagram
\[
\xymatrix{
T\ar[r] \ar[rd]&Q\ar[d]\\
&Q'
}
\]
is commutative by replacing $Q$ by an etale neighborhood of $T$. Then the independence of the morphism \eqref{1.10} follows from the functoriality of $N_{T/Q}$, $\Omega^1_Q(\log)$ with respect to $Q$. 

\subsection{Refined Swan conductor}\label{s3.2}

Let $K$ be a henselian discrete valuation field with residue field $F$ of characteristic $p>0$.  Let $G_K$ be the absolute Galois group of $K$, and let $(G^r_{K,\log})_{r\in \Q_{>0}}$ be Abbes--Saito's logarithmic upper ramification filtration; see \cite[Definition 3.12]{AS02}. For an element $\chi\in\rmH^1(G_K, \Q/\Z)$, we define the {\it Swan conductor} $\sw\chi$ to be the smallest rational number $r$ satisfying $\chi(G^{s}_{K,\log})=0$ for all $s>r$. The Swan conductor is an integer since Kato's definition, see \cite{Ka89}, of the Swan conductor coincides with the definition here by \cite[Theorem~1.3]{KS19} and the Swan conductor is defined as an integer in Kato's definition. 

We use the same notation as in Section~\ref{s2.2}.  Take an exact closed immersion $T\to Q$ to a log smooth scheme over $S$. For a rational number $r>0$ such that $er$ is an integer, we define the scheme $Q^{[r]}_{S'}$ to be the dilatation $Q^{[D_{r}\cdot T_{S'}]}$ of $Q_{S'}$ with respect to the Cartier divisor $D_r$ defined by $\frakm^{er}_{K'}$ and the closed subscheme~$T_{S'}$. (See \cite[Definition 3.1.1]{Sa20} for the definition of the dilatation.)

Let $Q^{(r)}_{S'}$ be the normalization of $Q^{[r]}_{S'}$. Let $Q^{[r]}_{F'}$ and $Q^{(r)}_{F'}$ be the closed fibers of $Q^{[r]}_{S'}$ and $Q^{(r)}_{S'}$, respectively. 

Let $L/K$ be a finite Galois extension, and let $G=\Gal(L/K)$ be Galois group. Let $r>0$ be a rational number such that $G^{r+}_{\log}=\cup_{s>r}G^s_{\log}=1$. By the reduced fiber theorem, see \cite{BLR}, there exists a finite separable extension $K'$ of $K$ of ramification index $e$ such that $er$ is an integer and the geometric closed fiber $Q_{S'}^{(r)}\times_{S'}\overline{F}$ is reduced. We define the scheme $\Theta_{L/K, \log, F'}^{(r)}$ to be the vector bundle 
$\Hom_{F}(\frakm^{er}_{K'}/\frakm^{er+1}_{K'}, \Tor_1^{\calO_L}(\Omega^1_{\calO_L/\calO_K}(\log/\log), E))^\vee$ over $\Spec (E\otimes_{F}F')_{\red}$. If we take a minimal exact immersion $T\to Q$ to a log smooth scheme over $S$, the isomorphism \eqref{1.6} induces an isomorphism $Q_{F',\red}^{[r]}\to\Theta_{L/K,\log, F'}^{(r)}$ by \cite[Proposition 3.1.3(2)]{Sa20}. We define the scheme $\Phi_{L/K,\log, F'}^{(r)}$ to be $Q_{F'}^{(r)}$. By
a logarithmic variant of \cite[Lemma 3.3.7]{Sa20}, the definition of $\Phi_{L/K,\log, F'}^{(r)}$ does not depend on the choice of a minimal exact immersion $T\to Q$, and for every exact immersion $T=\Spec\calO_L\to Q$ to a log smooth scheme $Q$ over $S$, we have a cartesian diagram
\begin{equation}\label{1.9}
\vcenter{
\xymatrix{
\overline{T}_{\overline{F}}\ar[r]\ar[rd]&Q_{F'}^{(r)}\ar[r]\ar[d]&Q_{F',\red}^{[r]}\ar[d]\\
&\Phi_{L/K,\log, F'}^{(r)}\ar[r]&\Theta_{L/K,\log, F'}^{(r)}\rlap{.}\ar@{}[lu]|{\square}
}
}
\end{equation} 

We fix a morphism $\map{i_0}{L}{K_s}$ to a separable closure of $K$. Let $\Theta_{L/K, \log, \overline{F}}^{(r)\circ}$ and $\Phi_{L/K,\log, \overline{F}}^{(r)\circ}$ denote the connected components of $\Theta_{L/K,\log, \overline{F}}^{(r)}$ and $\Phi_{L/K,\log,\overline{F}}^{(r)}$, respectively, containing the image of the closed point of $\overline{T}_{\overline{F}}$ corresponding to $i_0$. 
 
\begin{prop}[\textit{cf.} {\cite[Theorem 4.3.3]{Sa20}}]
The $G^r_{\log}$-torsor $\Phi_{L/K,\log, \overline{F}}^{(r)\circ}$ over $\Theta_{L/K,\log, \overline{F}}^{(r)\circ}$ is additive. 
Hence, there exists a group scheme structure on $\Phi_{L/K,\log, \overline{F}}^{(r)\circ}$ such that the sequence
\[
0\lra G^r_{\log}\lra\Phi_{L/K,\log, \overline{F}}^{(r)\circ}\lra \Theta_{L/K, \log, \overline{F}}^{(r)\circ}\lra 0
\]
is an extension of smooth group schemes. 
\end{prop}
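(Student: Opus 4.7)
The plan is to transport the non-logarithmic argument \cite[Theorem 4.3.3]{Sa20} to the logarithmic setting by systematically replacing smooth schemes and immersions with log smooth schemes and exact immersions. Since all the ingredients of that proof (dilatations, reduced fiber theorem, the torsor-to-extension equivalence \cite[Proposition 2.1.6]{Sa20}) have logarithmic counterparts already set up in Section~\ref{sec:3}, the main task is to check that these replacements fit together.

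First I would fix a minimal exact immersion $T = \Spec \calO_L \to Q$ to a log smooth scheme over $S$, whose existence is Lemma~\ref{1.2}. Via the cartesian diagram \eqref{1.9}, $\Theta^{(r)\circ}_{L/K,\log,\overline{F}}$ is realized as a connected component of $Q^{[r]}_{\overline{F},\red}$ and $\Phi^{(r)\circ}_{L/K,\log,\overline{F}}$ as a connected component of $Q^{(r)}_{\overline{F}}$, with $\Theta^{(r)\circ}$ an additive $\overline{F}$-vector bundle by the isomorphism \eqref{1.6} and the logarithmic variant of \cite[Proposition 3.1.3(2)]{Sa20}. Freeness of $\Omega^1_{Q/S}(\log/\log)$ guarantees this linear-algebraic structure even over points where the multiplicity of $Q_F$ is divisible by $p$.

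Next I would establish additivity of the torsor. Following Saito's strategy, consider the product $Q \times_S Q$ together with the diagonal exact immersion $T \to Q \times_S Q$; this is again an immersion into a log smooth scheme over $S$, and the dilatation $(Q \times_S Q)^{(r)}_{S'}$ can be compared, via the logarithmic analog of \cite[Lemma 3.3.7]{Sa20}, with $Q^{(r)}_{S'} \times_{S'} Q^{(r)}_{S'}$ over a suitable neighborhood. This comparison induces an addition morphism on $\Theta^{(r)\circ}$ and lifts it to $\Phi^{(r)\circ}$, giving the compatibility of the $G^r_{\log}$-torsor structure with the vector-bundle structure on the base, i.e.\ additivity in the sense of \cite[Definition 2.1.4(1)]{Sa20}. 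The action of $G^r_{\log} = \Gal(L/K)^r_{\log}$ on $Q^{(r)}_{\overline{F}}$ permutes the sheets over $\Theta^{(r)\circ}$ compatibly with the lifted addition because this action is induced by the Galois action on $T$, which is trivial on the diagonal used for the addition.

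Once additivity is in place, \cite[Proposition 2.1.6]{Sa20} upgrades the additive $G^r_{\log}$-torsor $\Phi^{(r)\circ} \to \Theta^{(r)\circ}$ to a group scheme structure fitting into the desired extension $0 \to G^r_{\log} \to \Phi^{(r)\circ}_{L/K,\log,\overline{F}} \to \Theta^{(r)\circ}_{L/K,\log,\overline{F}} \to 0$ of smooth commutative group schemes. The main obstacle I expect is the verification that the identification of the dilatation of $Q \times_S Q$ with $Q^{(r)} \times_{\Theta^{(r)}} Q^{(r)}$ goes through in the log setting, especially at wild points where the reduced closed fiber meets $D$ with $p \mid m$; here one has to work locally with the explicit model $\calO_K[x,u^{\pm 1}]/(ux^m - \pi)$ and verify that the logarithmic dilatation is well-behaved with respect to the product, just as in the non-log case but with this extra monoid-theoretic bookkeeping.
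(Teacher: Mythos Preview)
Your direct adaptation of the non-logarithmic argument has a genuine gap at the step where you pass to $Q \times_S Q$. In the non-log setting, the fiber product of smooth $S$-schemes is smooth, so Saito's proof of \cite[Theorem 4.3.3]{Sa20} can use $P \times_S P$ without trouble. But in the paper's definition of log smoothness, $Q$ is required to be regular, and $Q \times_S Q$ typically is not. Take the local model $Q = \Spec \calO_K[x,u^{\pm 1}]/(ux^m-\pi)$ with $m = e_{L/K} \ge 2$: then $Q \times_S Q$ is cut out in $\calO_K[x_1,x_2,u_1^{\pm 1},u_2^{\pm 1}]$ by $u_1x_1^m - \pi$ and $u_1x_1^m - u_2x_2^m$, and at a closed point with $x_1=x_2=0$ the second relation lies in $\frakm^2$, so the cotangent space has dimension $4$ while the Krull dimension is $3$. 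Thus $Q \times_S Q$ is not log smooth, the diagonal $T \to Q \times_S Q$ is not an exact immersion in the required sense, and your appeal to the logarithmic analog of \cite[Lemma 3.3.7]{Sa20} is not available. The ``extra monoid-theoretic bookkeeping'' you anticipate is not enough to fix this; one would need a genuinely different framework (fiber products in a log category) that the paper does not set up.

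The paper's proof avoids this problem entirely by a reduction trick rather than a direct adaptation. Using \cite[Theorem 3.1]{KS19}, one finds an extension $K'/K$ with $e_{L'/K'}=1$ for $L'=LK'$ and with $\Omega^1_F(\log)\to\Omega^1_{F'}(\log)$ injective; functoriality of the construction together with \cite[Corollary 2.1.8(3)]{Sa20} then reduces additivity for $L/K$ to additivity for $L'/K'$. But when $e_{L'/K'}=1$, any immersion $T'\to P$ into a smooth scheme is automatically an exact immersion into a log smooth scheme, so $\Phi^{(r)\circ}_{\log}\to\Theta^{(r)\circ}_{\log}$ literally coincides with the non-log $\Phi^{(r)\circ}\to\Theta^{(r)\circ}$, and additivity is just \cite[Theorem 4.3.3(1)]{Sa20}. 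This reduction buys you the result without ever needing to make sense of products of log smooth schemes.
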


\begin{proof}[Proof \textup{(Saito)}]
First, we prove that $\Phi_{L/K,\log, \overline{F}}^{(r)\circ}$ is a $G^r_{\log}$-torsor over $\Theta_{L/K,\log, \overline{F}}^{(r)\circ}$. 
We write $0\in \Theta_{L/K,\log, \overline{F}}^{(r)\circ}$ for the image of the closed point of $\overline{T}_{\overline{F}}$ corresponding to $i_0$. Then the fiber $\Phi_{L/K,\log, \overline{F}}^{(r)\circ}\times_{\Theta_{L/K,\log, \overline{F}}^{(r)\circ}}0$ is identified with $\Phi_{L/K,\log, \overline{F}}^{(r)\circ}\cap \Mor_K(L,K_s)$. The latter is a $G^r_{\log}$-torsor, so the assertion follows.  

Next, we prove the $G^r_{\log}$-torsor is additive. We reduce the assertion to the case where the ramification index $e_{L/K}$ is $1$. By \cite[Theorem 3.1]{KS19}, there exists an extension $K'/K$ such that $e_{L'/K'}=1$ and the map $\Omega^1_F(\log)\to \Omega^1_{F'}(\log)$ is injective, where $L'=LK'$ denotes the composite field and $F'$ denotes the residue field of $K'$. From the functoriality of the construction of $\Phi_{\log}\to \Theta_{\log}$, we deduce the commutative diagram
\[
\xymatrix{
\Phi_{L'/K',\log, \overline{F'}}^{(r)\circ} \ar[r]\ar[d]&\Theta_{L'/K',\log, \overline{F'}}^{(r)\circ}\ar[d] \\
\Phi_{L/K,\log, \overline{F}}^{(r)\circ}\ar[r]& \Theta_{L/K,\log, \overline{F}}^{(r)\circ}\rlap{.}
}
\]
Hence, by \cite[Corollary 2.1.8(3)]{Sa20}, it suffices to show that the ${G'}^r_{\log}$-torsor $\Phi_{L'/K',\log, \overline{F}}^{(r)\circ}$ over $\Theta_{L'/K', \log, \overline{F}}^{(r)\circ}$ is additive, where $G'=\Gal(L'/K')$. 

If $e_{L/K}$ is 1, then we have $G^r=G^r_{\log}$ and the morphism $\Phi_{L/K,\log,\overline{F}}^{(r)\circ}\to \Theta_{L/K, \log,\overline{F}}^{(r)\circ}$ is equal to $\Phi_{L/K,\overline{F}}^{(r)\circ}\to \Theta_{L/K, \overline{F}}^{(r)\circ}$ since every immersion $T\to P$ to a smooth scheme is an exact immersion to the log smooth scheme $P$ under the assumption $e_{L/K}=1$. Hence the assertion follows from the fact that $\Phi_{L/K,\overline{F}}^{(r)\circ}$ is an additive torsor over $\Theta_{L/K, \overline{F}}^{(r)\circ}$; see \cite[Theorem 4.3.3(1)]{Sa20}. 
\end{proof}

In the same way as in Section~\ref{s2.2}, we get a morphism 
\[
[\Phi_{\log}]\colon \Hom_{\Fp}\left(G^r_{\log},\Fp\right)\lra \Hom_{\overline{F}}\left(\frakm^r_{K_s}/\frakm^{r+}_{K_s}, \Tor_1^{\calO_L}\left(\Omega^1_{T/S}(\log/\log), \overline{F}\right)\right). 
\]
We define the {\it refined Swan conductor} to be the composition of injections 
\begin{multline}\label{1.8}
\rsw \colon \Hom\left(G^r, \Fp\right)\xrightarrow{[\Phi_{\log}]}\Hom_{\overline{F}}\left(\frakm^r_{K_s}/\frakm^{r+}_{K_s}, \Tor_1^{\calO_L}\left(\Omega^1_{T/S}(\log/\log), \overline{F}\right)\right) \\
\lra\Hom_{\overline{F}}\left(\frakm^r_{K_s}/\frakm^{r+}_{K_s}, \Omega^1_{F}(\log)\otimes_F\overline{F}\right), 
\end{multline}
where the second morphism is induced by the map \eqref{1.10}. 
We call $\rsw\chi$ the {\it refined Swan conductor} of $\chi$ for $\map{\chi}{G^r}{\Fp}$. 

\begin{remark}
The construction of the refined Swan conductor here coincides with the construction in \cite{Sa12}. 
Indeed, using the notation in \cite{Sa12}, we have the  diagram
\[
\xymatrix{
0\ar[r]& G^r_{\log}\ar[r]\ar@{=}[d]& Q^{(r)}_{\overline{F}} \ar[r]\ar[d]& P_{\overline{F}}^{(r)}= Q^{[r]}_{\overline{F}}  \ar[r]\ar[d]& 0\\
0\ar[r]& G^r_{\log}\ar[r]&\Phi_{L/K,\log,\overline{F}}^{(r)\circ}\ar[r]& \Theta_{L/K,\log, \overline{F}}^{(r)\circ}\ar[r]& 0
}
\]
by \eqref{1.9}, where the right vertical map is induced by the second morphism of \eqref{1.8}. 
\end{remark}

\begin{prop}\label{1.3}
Let $\chi\in \rmH^1(K, \Q/\Z)$ be a character of Swan conductor $n$. Then, the image of the refined Swan conductor $\map{\rsw\chi}{\frakm^n_{K}/\frakm^{n+1}_{K}}{\Omega^1_F(\log)\otimes_F\overline{F}}$ is contained in $\Omega^1_F(\log)$.
\end{prop}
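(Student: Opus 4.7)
The plan is to reduce the statement to Kato's original construction of the refined Swan conductor in \cite{Ka89}. Kato's construction, which proceeds through Artin--Schreier--Witt theory after a reduction to characters killed by $p$, produces for every character $\chi \in \rmH^1(K, \Q/\Z)$ of Swan conductor $n$ an injection of $F$-vector spaces
\[
\rsw_{\mathrm{Kato}}\chi\colon \frakm_K^n/\frakm_K^{n+1}\lra \Omega^1_F(\log),
\]
whose target does not involve any passage to an algebraic closure of the residue field. By the comparison theorem \cite[Theorem 1.5]{KS19}, this map coincides with the refined Swan conductor $\rsw\chi$ defined via Abbes--Saito theory in \eqref{1.8} under the natural inclusion $\Omega^1_F(\log) \hookrightarrow \Omega^1_F(\log)\otimes_F\overline{F}$. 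The image of $\rsw\chi$ in $\Omega^1_F(\log)\otimes_F\overline{F}$ therefore lies in $\Omega^1_F(\log)$, as claimed.

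The only non-formal point is to verify that the comparison in \cite[Theorem 1.5]{KS19} is compatible with the two targets in the expected way, i.e.\ via base change along $F \hookrightarrow \overline{F}$ rather than through some twisted identification. I expect this to be the main obstacle, but it should be built into the formulation of the comparison. As a back-up, a direct descent argument is available within the framework of Section~\ref{s3.2}: using \cite[Theorem 3.1]{KS19} one reduces to the case $e_{L/K}=1$ and $\Omega^1_F(\log)\to\Omega^1_{F'}(\log)$ injective, after which the auxiliary residue field $F'$ arising in the construction of $\Phi_{L/K,\log,F'}^{(r)}\to\Theta_{L/K,\log,F'}^{(r)}$ can be taken to be $F$ itself, and $[\Phi_\log](\chi)$ is $\Gal(\overline{F}/F)$-equivariantly defined over $F$. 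This descent is special to the logarithmic setting, which is ultimately why Proposition~\ref{1.3} is cleaner than its non-logarithmic counterpart Theorem~\ref{2.2}, whose proof requires the more delicate analysis of Section~\ref{sec:5}.
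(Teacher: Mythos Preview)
Your main argument is correct and is exactly the paper's proof: invoke Kato's original definition, which lands in $\Omega^1_F(\log)$, and then cite \cite[Theorem~1.5]{KS19} for the coincidence of the two refined Swan conductors. The extra discussion in your second paragraph (compatibility check and back-up descent argument) goes beyond what the paper provides, but the core approach is identical.
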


\begin{proof}
  The assertion follows from \cite[Theorem 1.5]{KS19} since the refined Swan conductor is defined by Kato
  as a map to $\Omega^1_F(\log)$. 
\end{proof}

We recall the integrality of the refined Swan conductor proved by Kato. 
 
\begin{thm}[\textit{cf.} {\cite[Theorem 7.1 and Proposition 7.3]{Ka89}}]\label{1.1}
Let $A$ be an excellent regular local ring of dimension~$d$ with fraction field $K$ and with residue field $k$ of characteristic $p > 0$.  We assume $c=[k:k^p]<\infty$ and fix a lifting $(x_l)_{l=1,\dots, c}$ of a $p$-basis of\, $k$ to $A$.  Let $(\pi_i)_{i=1,\dots, d}$ be a regular system of parameters of $A$, and let $K_i$ be the local field at the prime ideal generated by $\pi_i$.  We fix an integer $r$ satisfying $1\le r\le d$. Let $D_i$ be the divisor on $X=\Spec A$ defined by $\pi_i$, and let $U$ be the complement of\, $D=\cup_{i=1}^rD_i$.  Let $\chi$ be an element of\, $\rmH^1(U, \Q/\Z)$, and put $n_i=\sw(\chi|_{K_i})$. Write
\[
\rsw(\chi|_{K_j})=\left(\sum_{1\le i\le r}\alpha_{i,j}d\log \pi_i+\sum_{r+1\le i \le d}\alpha_{i,j}d\pi_i+\sum_{1\le l\le c}\beta_{l,j}dx_l\right)/\pi_1^{n_1}\cdots \pi_r^{n_r} 
\]
with $\alpha_{i,j}, \beta_{l,j}\in \Frac(A/\pi_j)$ for $1\le i\le d$, $1\le j\le r$ satisfying $n_j\ge 1$, and $1\le l \le c$. Then,  the following properties hold:
\begin{enumerate}
\item We have $\alpha_{i,j}, \beta_{l,j} \in A/\pi_j$. 
\item For integers $j,j'$ satisfying $1\le j,j'\le r$, the images of $\alpha_{i,j}$ and $\alpha_{i,j'}$ in $A/(\pi_j)+(\pi_{j'})$ are equal for each $i$ and the images of $\beta_{l,j}$ and $\beta_{l,j'}$ in $A/(\pi_j)+(\pi_{j'})$ are equal for each $l$.
\end{enumerate}
\end{thm}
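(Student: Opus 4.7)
The plan is to globalize the construction of Section~\ref{s3.2} across all divisors $D_j$ simultaneously, and to extract~(1) and~(2) as properties of a single global section of a twisted log-cotangent sheaf. First I would reduce to the $p$-primary case: the prime-to-$p$ part of $\chi$ is tame along each $D_i$, so $\sw=0$ and $\rsw=0$, and contributes nothing. We may therefore assume $\chi\in \rmH^1(U,\Z/p^n\Z)$ for some $n$. Let $V\to U$ be a Galois \'etale trivialization of $\chi$ with Galois group $G$ of $p$-power order, and let $Y$ be the normalization of $X$ in the function field of $V$.

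Next I would globalize. Choose a global minimal exact immersion $Y\to Q$ into a log-smooth scheme over $X$ (this exists by a routine extension of Lemma~\ref{1.2}), and form the normalized dilatation $Q^{(R)}$ of $Q$ along the effective Cartier divisor $R=\sum_{i=1}^r n_iD_i$ and the closed subscheme $Y$. Formally at the generic point $\frakp_j$ of $D_j$, this reproduces the construction $\Phi^{(n_j)\circ}_{L/K_j,\log}\to\Theta^{(n_j)\circ}_{L/K_j,\log}$ of Section~\ref{s3.2}. The reduced fiber theorem, applied once to $Q^{(R)}\to\Spec\calO_{K'}$ for a single sufficiently ramified $K'/K$, ensures that the associated extension of smooth group schemes is defined uniformly over $D_{\red}$, not merely at its generic points.

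Finally, pulling back the Artin--Schreier extension along $\chi\colon G\to\Fp$ (on each graded piece $G^r_{\log}/G^{r+}_{\log}$) produces a single global class, which via the global analogue of~\eqref{1.10} yields a section of $\Omega^1_X(\log)(R)|_{D_{\red}}$. Its restriction to the generic point of $D_j$ equals $\rsw(\chi|_{K_j})$, so its coefficients lie in $A/\pi_j$, giving~(1); and because one and the same global section restricts coherently to both $D_j$ and $D_{j'}$, the induced elements on $D_j\cap D_{j'}$ agree, giving~(2). The hardest step is verifying that the distinguished connected components $\Phi^{\circ}$, $\Theta^{\circ}$, pinned down locally by the marked geometric point coming from $i_0\colon L\to K_s$, vary coherently as $\frakp_j$ moves through the codimension-two strata $D_j\cap D_{j'}$; this is the classical local-global compatibility underlying Kato--Saito-type arguments. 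An alternative and considerably more expedient route is to invoke the equivalence of Saito's definition of $\rsw$ with Kato's, established in \cite[Theorem~1.3]{KS19}, and simply to cite \cite[Theorem~7.1 and Proposition~7.3]{Ka89}, where Kato proves the statement directly via his filtration on Milnor $K$-theory $K^M_d$.
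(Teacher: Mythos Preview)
The paper does not prove this statement; it is stated with the attribution ``\textit{cf.}~\cite[Theorem~7.1 and Proposition~7.3]{Ka89}'' and used as a black box. Your ``alternative and considerably more expedient route'' --- invoke \cite[Theorem~1.3]{KS19} to identify Saito's $\rsw$ with Kato's, then cite \cite[Theorem~7.1 and Proposition~7.3]{Ka89} --- is exactly what the paper does, and is the intended argument.

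Your main proposal, by contrast, is only a plan, and several of its steps do not go through as written. The constructions of Section~\ref{s3.2} --- log-smoothness, minimal exact immersions, $\Theta^{(r)}_{\log}$, $\Phi^{(r)}_{\log}$ --- are set up over $S=\Spec\calO_K$ for a henselian discrete valuation ring, not over a general excellent regular local base $X=\Spec A$; the assertion that ``a routine extension of Lemma~\ref{1.2}'' yields a global minimal exact immersion $Y\to Q$ over $X$ is doing a lot of unearned work. The normalization $Y$ need not be regular, and even defining ``log smooth over $X$ with respect to $D$'' so that the dilatation/normalization machinery and the exact sequence~\eqref{1.7} behave well at the codimension-two strata $D_j\cap D_{j'}$ is already a substantial project. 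You yourself flag the hardest step --- coherence of the connected components $\Phi^\circ$, $\Theta^\circ$ across codimension-two points --- without resolving it. Kato's original proof in \cite{Ka89} avoids all of this by working with his filtration on $\rmH^1$ via symbols and Milnor $K$-theory, which is intrinsically global; that is why the paper (and your alternative) simply cites him.
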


\section{Comparison}\label{sec:4}

In this section, we compare the refined Swan conductor with the characteristic form. 

Let $K$ be a henselian discrete valuation field with residue field $F$ of characteristic $p>0$. Let $\chi$ be an element of $\rmH^1(K,\Q/\Z)$, and let $L$ be a finite abelian Galois extension of $K$ such that $\chi$ factors through $G=\Gal(L/K)$. Since we have $G^r\supset G^r_{\log}\supset G^{r+1}$ for a rational number $r>0$ by \cite[Lemma 5.3]{AS03} and the Swan conductor and the total dimension are integers (as explained in Sections~\ref{s2.2} and~\ref{s3.2}), we have $\dt(\chi)=\sw(\chi)+1$ or $\dt(\chi)=\sw(\chi)$. We say that $\chi$ is of 
\refstepcounter{foo}\otherlabel{I}{\rm{I}}{{\it type}~I}
if $\dt(\chi)=\sw(\chi)+1$ and $\chi$ is of
\refstepcounter{foo}\otherlabel{II}{\II}{\it type~\II}
if $\dt(\chi)=\sw(\chi)$. 
If the residue field $F$ of $K$ is perfect, the character $\chi$ is of type~\ref{I} by \cite[Proposition 6.3.1]{AS03}.

\begin{prop}[\textit{cf.} {\cite[Propositions 1.1.8(b) and 1.1.10]{Sa20}}]\label{3.1}
Let $K$ be a discrete valuation field with residue field $F$. There exists an extension $K'$ of\, $K$ with perfect residue field $F'$ such that
\begin{equation}\label{3.5}
\rmH_1\left(L_{\overline{F}/\calO_K}\right)\lra \rmH_1\left(L_{\overline{F'}/\calO_{K'}}\right)
\end{equation}
is injective and $e_{K'/K}$ is equal to~$1$. 
\end{prop}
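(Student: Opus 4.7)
The plan is to construct $K'$ by adjoining a compatible system of $p$-power roots of liftings of a $p$-basis of $F$, and then verify the two required properties separately.

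Fix a $p$-basis $(v_i)_{i \in I}$ of $F$ and lift each $v_i$ to a unit $\tilde v_i \in \calO_K^\times$. Take $K'$ to be the completion of $K(\tilde v_i^{1/p^n}: i \in I,\, n \geq 1)$ with respect to the unique extension of the valuation of $K$. Each finite intermediate extension $K(\tilde v_i^{1/p^n})/K$ is generated by a root of the monic polynomial $X^{p^n} - \tilde v_i$ with unit constant term; such an extension has ramification index one, and this property is preserved under filtered colimit and completion, giving $e_{K'/K} = 1$. The residue field $F'$ contains $v_i^{1/p^n}$ for every $i$ and $n$, and since $(v_i)_{i \in I}$ is a $p$-basis of $F$, iterating the identity $F^{1/p} = F(v_i^{1/p}: i \in I)$ shows $F' = F^{1/p^\infty}$, which is perfect.

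For the injectivity of $\rmH_1(L_{\overline{F}/\calO_K}) \to \rmH_1(L_{\overline{F'}/\calO_{K'}})$, the strategy is to exploit Proposition~\ref{2.7}, which gives an explicit $\overline{F}$-basis $\{w\pi, wv_i\}_{i \in I}$ of the source, and to analyze the image of each basis element in the target via the functoriality of the cotangent complex. The distinguished triangles
\[
L_{\calO_{K'}/\calO_K} \otimes^{\mathrm{L}}_{\calO_{K'}} \overline{F'} \lra L_{\overline{F'}/\calO_K} \lra L_{\overline{F'}/\calO_{K'}}
\]
and
\[
L_{\overline{F}/\calO_K} \otimes^{\mathrm{L}}_{\overline{F}} \overline{F'} \lra L_{\overline{F'}/\calO_K} \lra L_{\overline{F'}/\overline{F}}
\]
reduce the claim to tracking the behavior of the FW-derivations $w(-)$ under the ring map $\calO_K \to \calO_{K'}$ and the field extension $\overline{F} \to \overline{F'}$, in such a way that no nontrivial $\overline{F}$-linear combination of the images of $w\pi$ and the $wv_i$ is killed.

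The main obstacle is precisely this injectivity step, which requires a careful analysis of how the cotangent complex of $\calO_{K'}/\calO_K$ interacts with the limit construction defining $K'$ and with the extension on the residue field side. The technical content is supplied by the corresponding analysis in Saito's \cite[Propositions 1.1.8(b) and 1.1.10]{Sa20}, to which one appeals to conclude.
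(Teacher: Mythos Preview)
The paper does not supply its own proof of this proposition; it is stated with a \textit{cf.}\ reference to \cite[Propositions 1.1.8(b) and 1.1.10]{Sa20} and used as a black box. Your proposal is therefore not being compared against an argument in the paper but against the cited source, and your approach---adjoining all $p$-power roots of lifts of a $p$-basis of $F$ and then invoking Saito for the injectivity---is exactly the construction underlying those propositions. The verification that $e_{K'/K}=1$ and that $F'$ is perfect is correct as you wrote it.

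One comment on presentation: your middle paragraph sets up the distinguished triangles as if you are about to carry out the injectivity computation yourself, but then the final paragraph defers entirely to \cite{Sa20}. Since the injectivity is the only nontrivial content (and is genuinely delicate, as the map on $\Omega^1_F$ pieces is far from injective), it would be cleaner either to commit to the citation from the outset or to actually execute the argument using the explicit basis from Proposition~\ref{2.7}. As written, the sketch promises more than it delivers, though what it delivers is correct.
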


\begin{prop}\label{3.2}
Let $K$ be a henselian discrete valuation field with residue field $F$. 
Let $L$ be a finite Galois extension of\, $K$ of Galois group $G$. Let $r>0$ be a rational number, and assume $G^{r+}_{\log}=1$ and $G^r_{\log}=G^{r+1}$. Then, there exists a commutative diagram
\[
   \xymatrix{
    \Hom_{\Fp}\left(G^r_{\log}, \Fp\right) \ar[r]^-{\rsw} \ar@{=}[d] & \Hom_{\overline{F}}\left(\frakm^r_{K_s}/\frakm^{r+}_{K_s}, \Omega^1_F(\log)\otimes_F\overline{F}\right) \ar[d] &  \\
   \Hom_{\Fp}\left(G^{r+1}, \Fp\right)\ar[r]^-{\ch} & \Hom_{\overline{F}}\left(\frakm^r_{K_s}/\frakm^{r+}_{K_s}, \rmH_1\left(L_{\overline{F}/\calO_K}\right)\otimes_{\calO_K}\frakm_{K}^{-1}\right)\rlap{,}
   }
\]
where the right vertical map is induced by the composition of the maps
\[
\Omega^1_F(\log)\otimes_F\overline{F}\xrightarrow{\res\otimes 1} \overline{F}\cong \overline{F}\otimes_{F}\frakm_{K}/\frakm^2_{K}\otimes_{\calO_K} \frakm_{K}^{-1}\ \overset{w}\lra \rmH_1(L_{\overline{F}/\calO_K})\otimes_{\calO_K}\frakm_{K}^{-1}.
\]
\end{prop}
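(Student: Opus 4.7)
The plan is to reduce to the case where the residue field $F$ is perfect via Proposition~\ref{3.1}, and then to compare the geometric objects $(\Theta^{(r+1)}, \Phi^{(r+1)\circ})$ and $(\Theta^{(r)}_{\log}, \Phi^{(r)\circ}_{\log})$ by exhibiting a common scheme that is a minimal immersion for both the non-logarithmic and the logarithmic theories. I would first apply Proposition~\ref{3.1} to choose $K'/K$ with perfect residue field $F'$ and $e_{K'/K} = 1$ for which \eqref{3.5} is injective. Since $e_{K'/K} = 1$, a uniformizer $\pi$ of $K$ remains a uniformizer of $K'$, and the filtrations on $\Gal(LK'/K')$ pull back to those on $G = \Gal(L/K)$; in particular the hypotheses $G^{r+}_{\log} = 1$ and $G^r_{\log} = G^{r+1}$ persist after base change. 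Both $\rsw$ and $\ch$ are natural with respect to this base change via the functoriality of $\Theta$ and $\Phi$, and the injectivity of \eqref{3.5} together with the analogous injectivity $\Omega^1_F(\log) \otimes_F \overline{F} \to \Omega^1_{F'}(\log) \otimes_{F'} \overline{F'}$ (automatic when $e_{K'/K}=1$) reduces the diagram to the case over $K'$. We may therefore assume $F$ is perfect, in which case $\Omega^1_F = 0$, $\Omega^1_F(\log) \otimes \overline{F}$ is one-dimensional with basis $d\log\pi$, and $\res \otimes 1$ is an isomorphism onto $\overline{F}$.

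Next I would fix a minimal immersion $T = \Spec\calO_L \to P$ to a smooth scheme over $S$ in the sense of \eqref{2.6}. The closed fiber $P_F$ is smooth over $F$, and the natural log structure on $P$ given by $P_F$ makes $P$ log smooth over $S$ with $T \to P$ exact. Comparing \eqref{2.10} with \eqref{1.7}, and using that the relation $\pi = uf$ (for $f$ a local equation of $P_F$ and $u$ a unit) forces $d\log f \equiv -d\log u$ in $\Omega^1_{P/S}(\log/\log)$, one checks that the canonical map $\Omega^1_{P/S} \to \Omega^1_{P/S}(\log/\log)$ is an isomorphism on a neighborhood of $T$, so that $\Tor_1^{\calO_L}(\Omega^1_{T/S}, E) = \Tor_1^{\calO_L}(\Omega^1_{T/S}(\log/\log), E)$ and $T \to P$ is also a minimal exact immersion in the sense of \eqref{1.6}. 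Consequently the same $P$ and its dilatations compute both $(\Theta^{(r+1)}, \Phi^{(r+1)})$ and $(\Theta^{(r)}_{\log}, \Phi^{(r)}_{\log})$.

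The key step is then to identify the non-log dilatation $P^{[r+1]}_{S'}$ (defined with respect to $\frakm^{e(r+1)}_{K'}$) with the log dilatation $P^{[r]}_{S'}$ (defined with respect to $\frakm^{er}_{K'}$) up to an $\frakm_K$-twist on the linear structure. Under the identification of the two $\Tor$ groups, one has $\Theta^{(r+1)} \cong \Theta^{(r)}_{\log} \otimes_F \frakm_K/\frakm_K^2$, and the Artin--Schreier extensions $\Phi^{(r+1)\circ}$ and $\Phi^{(r)\circ}_{\log}$ correspond to each other under this twist. Tracing through, the basis element $d\log\pi$ of $\Omega^1_F(\log)\otimes\overline{F}$ is sent by $\res \otimes 1$ followed by $w$ to $w\pi \otimes \pi^{-1}$, which is exactly the twist relating $\Hom_{\overline{F}}(\frakm^r_{K_s}/\frakm^{r+}_{K_s}, \rmH_1(L_{\overline{F}/\calO_K}) \otimes_{\calO_K} \frakm_K^{-1})$ to $\Hom_{\overline{F}}(\frakm^{r+1}_{K_s}/\frakm^{(r+1)+}_{K_s}, \rmH_1(L_{\overline{F}/\calO_K}))$. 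This yields the commutativity of the diagram.

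The main obstacle I expect is the clean verification that $P$ with its natural log structure is minimal in both senses, and the careful bookkeeping of the $\frakm_K$-twist between the two dilatations so that $\ch$ at level $r+1$ matches $\rsw$ at level $r$. An explicit local-coordinate computation using a uniformizer $\pi_L$ of $L$ with $\pi_L^{e_{L/K}} = u\pi$ would likely be needed both to justify the identification of the two $\Tor$ groups (which ultimately rests on $E/F$ being separable in the perfect residue field case) and to confirm that the $\pi^{-1}$-twist enters on the correct side of the diagram.
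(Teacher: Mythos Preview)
Your reduction to perfect residue field via Proposition~\ref{3.1} matches the paper's strategy, and the paper does indeed invoke Lemma~\ref{3.4} (the separable residue extension case) after this reduction. However, your proposed verification of that case contains a genuine gap.

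You claim that for a minimal immersion $T\to P$ into a smooth $S$-scheme, equipping $P$ with the log structure given by $P_F$ makes $T\to P$ an \emph{exact} immersion. This is false whenever $e_{L/K}>1$. Since $P$ is smooth over $S$, the closed fiber $P_F$ is already reduced, so $T\times_P P_{F,\red}=\Spec(\calO_L/\frakm_K\calO_L)=\Spec(\calO_L/\frakm_L^{e_{L/K}})$, which equals $\Spec E$ only when $e_{L/K}=1$. In the interesting (wildly ramified) situation one has $p\mid e_{L/K}$, so exactness fails and you cannot use the same scheme $P$ for both theories. Consequently the identification $\Tor_1^{\calO_L}(\Omega^1_{T/S},E)=\Tor_1^{\calO_L}(\Omega^1_{T/S}(\log/\log),E)$ and the claim that ``the same $P$ and its dilatations compute both $(\Theta^{(r+1)},\Phi^{(r+1)})$ and $(\Theta^{(r)}_{\log},\Phi^{(r)}_{\log})$'' are unjustified.

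The paper's proof of Lemma~\ref{3.4} addresses exactly this point: starting from a minimal immersion $T\to P$ into a smooth curve over $S$, it passes to the dilatation $P^{[1]}$ and extracts an open subscheme $Q=P\times_{\A^1_S}\Spec\calO_K[s,u^{\pm1}]/(us^m-\pi)$ (with $m=e_{L/K}$ and $s$ lifting a uniformizer of $L$), which is log smooth and for which $T\to Q$ is a minimal exact immersion. The isomorphism $N_{T/P}\otimes_{\calO_K}\frakm_K^{-1}\cong N_{T/Q}$ is what produces the $\frakm_K^{-1}$-twist you were anticipating, and the commutativity of diagram~\eqref{3.6} is then checked by tracking the explicit generator $g=f/\pi$ of $N_{T/Q}$. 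Your proposal gestures at the right twist and the right local computation, but without the passage to $Q\subset P^{[1]}$ there is no log-smooth scheme on which to run the logarithmic construction.
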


We reduce the proof of Proposition~\ref{3.2} to the following case, which is proved by Saito. 

\begin{lemma}\label{3.4}
Proposition~{\rm\ref{3.2}} holds if the residue field $E$ of\, $L$ is a separable extension of\, $F$. \textup{(}In this case, the equality $G^r_{\log}=G^{r+1}$ holds by \textup{\cite[Proposition 6.3.1]{AS03})}. 
\end{lemma}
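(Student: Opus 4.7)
The plan is to exhibit both extensions
\[
0 \to G^{r+1} \to \Phi_{L/K,\overline{F}}^{(r+1)\circ} \to \Theta_{L/K,\overline{F}}^{(r+1)\circ} \to 0, \qquad 0 \to G^r_{\log} \to \Phi_{L/K,\log,\overline{F}}^{(r)\circ} \to \Theta_{L/K,\log,\overline{F}}^{(r)\circ} \to 0
\]
from a single geometric setup and to construct a canonical morphism of extensions between them. Because $G^r_{\log}=G^{r+1}$ under our hypothesis, pushforward along $\chi$ will then yield the required commutative diagram, provided we identify the induced morphism on the base $\Theta$-spaces with the composition $(\res\otimes 1)$ followed by $w$ that appears in the proposition.

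I would first reduce to the totally ramified case: since $E/F$ is separable, the maximal unramified subextension $L_0\subset L$ does not alter the ramification groups or any of the invariants in the diagram, so I may replace $K$ by $L_0$ and assume $L/K$ is totally ramified of degree $e$, with uniformizers $\pi_L$ of $L$ and $\pi=u\pi_L^e$ of $K$. Choose a minimal (non-log) immersion $T=\Spec\calO_L\to P$ to a smooth $S$-scheme, arranged so that one coordinate $X_1$ of $P$ specializes to $\pi_L$; this can always be achieved after enlarging $P$. Produce the log smooth $S$-scheme $Q$ from $P$ in the style of Lemma~\ref{1.2} by adjoining an invertible coordinate $U$ satisfying $UX_1^e=\pi$, so that the smooth projection $Q\to P$ and the induced exact immersion $T\to Q$ (sending $U\mapsto u$) put the non-log and log constructions on a common geometric footing. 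The identity $d\pi_L=\pi_L\cdot d\log\pi_L$ then governs the resulting comparison between $N_{T/P}$ and $N_{T/Q}$, and hence between $\Tor_1^{\calO_L}(\Omega^1_{\calO_L/\calO_K},E)$ and $\Tor_1^{\calO_L}(\Omega^1_{\calO_L/\calO_K}(\log/\log),E)$.

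Next, I would compare the dilatations $P^{[r+1]}_{S'}$ and $Q^{[r]}_{S'}$. Using the relation $UX_1^e=\pi$ together with the conormal comparison above, I expect a canonical morphism between the normalized dilatations which restricts, on identity components, to a morphism of extensions $\Phi_{L/K,\log,\overline{F}}^{(r)\circ}\to\Phi_{L/K,\overline{F}}^{(r+1)\circ}$ equal to the identity on the common subgroup $G^r_{\log}=G^{r+1}$. The induced morphism on $\Theta$'s, under the duality identification with $\Hom(\frakm^r_{K_s}/\frakm^{r+}_{K_s},-)$, should then coincide with $(\res\otimes 1)$ followed by $w$: the residue extracts the $d\log\pi$-component of a log differential, and $w$ sends the resulting element of $\overline{F}$ to $w\pi$, with the twist by $\frakm_K^{-1}$ accounting for the passage between $\frakm^{r+1}_{K_s}/\frakm^{(r+1)+}_{K_s}$ and $\frakm^r_{K_s}/\frakm^{r+}_{K_s}\otimes\frakm_K^{-1}$. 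The main obstacle will be verifying that the scheme-theoretic comparison of the normalizations of $P^{[r+1]}$ and $Q^{[r]}$ produces the above morphism canonically (not merely up to a unit ambiguity) and that the induced map on $\Theta$-spaces agrees on the nose with the $\res$-and-$w$ composition; this requires a careful analysis of how the normalization interacts with the substitution $\pi=u\pi_L^e$ at the level of dilatation coordinates.
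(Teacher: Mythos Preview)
Your outline is on the right track and follows the same overall strategy as the paper: build $Q$ from a minimal non-log immersion $T\to P$ via the relation $U X_1^e=\pi$, compare the dilatations, and then verify that the induced map on the $\Theta$-spaces is exactly $(\res\otimes 1)$ followed by $w$. Two remarks on execution, and one on the point you flag as the main obstacle.

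First, the reduction to the totally ramified case is unnecessary: separability of $E/F$ already forces $\calO_L$ to be monogenic over $\calO_K$, so you can take $P$ of relative dimension~$1$ from the start. Second, the projection $Q\to P$ is \emph{not} smooth (the Jacobian of $UX_1^e-\pi$ with respect to $U$ is $X_1^e$, which vanishes on the closed fiber); what makes the construction work is a different structural fact you have not identified. Namely, $Q$ is naturally an open subscheme of the first dilatation $P^{[1]}$ of $P$ along $T$. This immediately yields the isomorphism $N_{T/P}\otimes_{\calO_K}\frakm_K^{-1}\cong N_{T/Q}$ (this is where the twist by $\frakm_K^{-1}$ in the statement comes from), and by the composition property of dilatations it gives isomorphisms $Q^{(r)}_{F'}\cong P^{(r+1)}_{F'}$ and $Q^{[r]}_{F',\red}\cong P^{[r+1]}_{F',\red}$, hence a morphism of extensions $\Phi^{(r+1)\circ}\to\Phi^{(r)\circ}_{\log}$ inducing the identity on $G^{r+1}=G^r_{\log}$. (Note the direction: non-log to log, not the reverse as you wrote; in this monogenic situation it is an isomorphism, so no harm done.)

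Your ``main obstacle'' is then resolved not by abstract reasoning but by a direct computation. One writes $\calO_L=A/f$ with $f=a\pi+bs^m$ ($a,b$ units, $s$ a lift of a uniformizer of $L$, $m=e_{L/K}$), so that $g=f/\pi=a+bv$ generates $N_{T/Q}$. Chasing $g$ around the square
\[
\xymatrix{
N_{T/Q}\otimes_{\calO_L}E \ar[r]\ar[d]_{\cong} & \Omega^1_F(\log)\otimes_F E \ar[d] \\
N_{T/P}\otimes_{\calO_L}E\otimes_{\calO_K}\frakm_K^{-1} \ar[r] & \rmH_1(L_{E/\calO_K})\otimes_{\calO_K}\frakm_K^{-1}
}
\]
one finds both composites send $g$ to $a\cdot w\pi\otimes\pi^{-1}$, using $p\mid m$ and the relation $md\log s-d\log v-d\log\pi=0$ in $\Omega^1_Q(\log)$. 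That explicit check is what pins down the map on the nose rather than up to a unit.
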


\begin{proof}[Proof \textup{(Saito)}]
  If $L/K$ is tamely ramified, then we have $G^r_{\log}=G^{r+1}=1$ and the assertion is trivial. Hence we may assume that $L/K$ is wildly ramified. Let $m=pn$ be the ramification index of $L/K$. Since $E$ is a separable extension of $F$ and thus $\calO_L$ is generated by a single element over $\calO_K$, we may take a minimal immersion $T=\Spec\calO_L\to P$ to a smooth scheme of relative dimension $1$ over $S=\Spec\calO_K$. We prove that the dilatation $P^{[1]}$ contains an open subscheme $Q$ such that $Q$ is log smooth over $S$ and the immersion $T\to Q$ is a minimal exact immersion. Let $x\in P$ the image of the closed point of $T$.  Since the assertion is local at $x$, after replacing $P$ by an open neighborhood of $x$, we may assume $P=\Spec A$ is affine and $\calO_L=A/f$ with $f\in A$. Let $\pi$ be a uniformizer of $K$, and let $s\in A$ be a lifting of a uniformizer of $L$. Further replacing $P$, we may assume that the canonical morphism $P\to \A^1_{S}=\Spec\calO_K[s]$ is \'etale. 
Let $\frakm_x=(f,s)$ be the maximal ideal of $A$ at $x$. Since $\pi$ is divisible by $s^m$ in $\calO_L=A/f$ and $\pi\in\frakm_x-\frakm_x^2$, we have $f\equiv \pi\mod\frakm_x^2$ and $\frakm_x=(\pi,s)$. We may write $f=a\pi+bs^m$ with $a,b\in A$. Since $f$ is not in $\frakm_x^2$, we see that $a$ is not in $\frakm_x$. Hence we may assume $a$ is a unit in $A$ by replacing $P$. We have $a\pi+bs^m=0\in \calO_L$, so $b$ is not in $\frakm_x$, and we may also assume $b$ is a unit. Then we have an equality $(f,\pi)=(\pi,s^m)$ of ideals of $A$. We have $P^{[1]}=\Spec A[s^m/\pi]=\Spec A[v]/(s^m-v\pi)$, and $P^{[1]}$ contains an open subscheme $Q=\Spec A[u^{\pm1}]/(us^m-\pi)=P\times_{\A^1_S}\calO_K[s,u^{\pm1}]/(us^m-\pi)$, which is log smooth over $S$ since $P\to \A^1_{S}$ is \'etale. Since the closed subscheme $Q_{F,\red}$ of $Q$ is defined by $s$, the inverse image $T\times_QQ_{F,\red}$ is $E$, and $T\to Q$ is an exact immersion. We note that $Q\to P$ induces an isomorphism $N_{T/P}\otimes_{\calO_K}\frakm_K^{-1}\to N_{T/Q}$. 

 Let $K'$ be a finite separable extension of $K$ such that the closed fibers of $P_{S'}^{(r)}$ and $Q_{S'}^{(r)}$ are reduced. By the functoriality of dilatations and normalizations, the middle square of the diagram
\begin{equation}
\vcenter{
   \xymatrix{
    \Phi_{L/K,F'}^{(r+1)} \ar[d] & P_{F'}^{(r+1)}\ar[l]_-{\cong} \ar[r]&P_{F',\red}^{[r+1]}\ar[r]^-{\cong}&\Theta_{L/K,F'}^{(r+1)}\ar[d]  \\
    \Phi_{L/K,\log,F'}^{(r)}& Q_{F'}^{(r)}\ar[l]_-{\cong} \ar[r]\ar[u]_-{\cong}&Q_{F',\red}^{[r]}\ar[r]^-{\cong}\ar[u]_-{\cong}&\Theta_{L/K,\log,F'}^{(r)}
      }
      }
\end{equation}
is commutative, and we have a commutative diagram 
\[
\xymatrix{
0\ar[r]& G^{r+1}\ar[r]\ar@{=}[d]& \Phi_{L/K, \overline{F}}^{(r+1)\circ} \ar[r]\ar[d]& \Theta_{L/K,\overline{F}}^{(r+1)\circ}\ar[r]\ar[d]& 0\\
0\ar[r]& G^r_{\log}\ar[r]&\Phi_{L/K,\log,\overline{F}}^{(r)\circ}\ar[r]& \Theta_{L/K,\log, \overline{F}}^{(r)\circ}\ar[r]& 0
}
\]
of extensions of smooth group schemes. Hence we have a commutative diagram
\[
   \xymatrix{
    \Hom_{\Fp}\left(G^{r}_{\log}, \Fp\right) \ar[r]^-{\rsw} \ar@{=}[d] & \Hom_{\overline{F}}\left(\frakm^r_{K_s}/\frakm^{r+}_{K_s},  \Tor_1^{\calO_L}\left(\Omega^1_{\calO_L/\calO_K}(\log/\log),\overline{F}\right)\right) \ar[d] &  \\
   \Hom_{\Fp}\left(G^{r+1}, \Fp\right)\ar[r]^-{\ch} & \Hom_{\overline{F}}\left(\frakm^r_{K_s}/\frakm^{r+}_{K_s},  \Tor_1^{\calO_L}\left(\Omega^1_{\calO_L/\calO_K},\overline{F}\right)\otimes_{\calO_K}\frakm_K^{-1}\right)\rlap{.}
   }
\]

It suffices to show that the diagram 
\begin{equation}\label{3.6}
\vcenter{
   \xymatrix{
    N_{T/Q}\otimes_{\calO_L} E\ar[r]\ar[d]_-{\cong}& \Omega^1_F(\log)\otimes_F E\ar[d] \\
         N_{T/P}\otimes_{\calO_L}E\otimes_{\calO_K}\frakm_K^{-1} \ar[r] &\rmH_1(L_{E/\calO_K})\otimes_{\calO_K}\frakm_K^{-1}
  }}
\end{equation}
is commutative. Since $g=f/\pi=a+bv$ defines a basis of $N_{T/Q}$, we consider the image of this basis. The left vertical map sends $g$ to $f\otimes \pi^{-1}$. The lower horizontal map sends $f\otimes \pi^{-1}$ to $wf\otimes \pi^{-1}=a\cdot w\pi\otimes \pi^{-1}$. The right horizontal map sends $g=f/\pi$ to $da+vdb+bvd\log v$. This is equal to $da+vdb+ad\log \pi$ in $\Omega^1_F(\log)\otimes E$ since we have $md\log s-d\log v-d\log\pi=0$ in $\Omega^1_Q(\log)$ and $p$ divides $m$ and $g=a+bv=0$ in $\calO_L=A[g]/g$. The right vertical map sends $da+vdb+ad\log\pi$ to $a\cdot w\pi\otimes \pi^{-1}$ since $a$ and $b$ are units in~$A$ and $da+vdb\in \Omega^1_E$. 
\end{proof}

\begin{proof}[Proof of Proposition~\ref{3.2}]
Let $K'$ be an extension as in Proposition~\ref{3.1}, and let $L'=LK'$ be the composition field and $G'=\Gal(L'/K')$ be the Galois group. Then, we have $G^{r+1}={G'}^{r+1}$ by \cite[Corollary 4.2.6]{Sa20}. Since the residue field $F'$ of $K'$ is perfect, we have ${G'}^r_{\log}={G'}^{r+1}$ by \cite[Proposition 6.3.1]{AS03}. Since we assume $G^r_{\log}=G^{r+1}$, we have $G^r_{\log}=G'^{r}_{\log}$. 

By the commutative diagram
\[
   \xymatrix{
 \Omega^1_F(\log)\otimes_F\overline{F} \ar[d]\ar[r] &  \Omega^1_{F'}(\log)\otimes_{F'}\overline{F'} \ar[d] &  \\
  \rmH_1\left(L_{\overline{F}/\calO_K}\right)\otimes_{\calO_K}\frakm_{K}^{-1}\ar[r] & \rmH_1\left(L_{\overline{F'}/\calO_{K'}}\right)\otimes_{\calO_{K'}}\frakm_{K'}^{-1}\rlap{,}
   }
\]
it suffices to show that the diagram
\[
\xymatrix{
    \Hom_{\Fp}\left(G^r_{\log}, \Fp\right) \ar[r]^-{\rsw} \ar@{=}[d] & \Hom_{\overline{F}}\left(\frakm^r_{K_s}/\frakm^{r+}_{K_s}, \Omega^1_F(\log)\otimes_F\overline{F}\right) \ar[d] &  \\
    \Hom_{\Fp}\left(G'^r_{\log}, \Fp\right) \ar[r]^-{\rsw} \ar@{=}[d] & \Hom_{\overline{F'}}\left(\frakm^r_{K'_s}/\frakm^{r+}_{K'_s}, \Omega^1_{F'}(\log)\otimes_{F'}\overline{F'}\right) \ar[d] &  \\
    \Hom_{\Fp}\left(G'^{r+1}, \Fp\right) \ar[r]^-{\ch} \ar@{=}[d] & \Hom_{\overline{F'}}\left(\frakm^r_{K'_s}/\frakm^{r+}_{K'_s}, \rmH_1\left(L_{\overline{F'}/\calO_{K'}}\right)\otimes_{\calO_{K'}}\frakm_{K'}^{-1}\right)  &  \\
   \Hom_{\Fp}\left(G^{r+1}, \Fp\right)\ar[r]^-{\ch} & \Hom_{\overline{F}}\left(\frakm^r_{K_s}/\frakm^{r+}_{K_s}, \rmH_1\left(L_{\overline{F}/\calO_K}\right)\otimes_{\calO_K}\frakm_{K}^{-1}\right)\ar[u]_{(\ref{3.5})}
   }
\]
is commutative since the map $\rmH_1(L_{\overline{F}/\calO_K})\to \rmH_1(L_{\overline{F'}/\calO_{K'}})$ is injective. The upper  and  lower squares are commutative by the functoriality of the refined Swan conductor and of the characteristic form, respectively. The middle square is commutative by Lemma~\ref{3.4} since $F'$ is perfect. 
\end{proof}

The following proposition is proved by Saito. 

\begin{prop}\label{3.3}
Let $K$ be a henselian discrete valuation field with residue field $F$. 
Let $L$ be a finite Galois extension of $K$ of Galois group $G$. Let $r>1$ be a rational number, and assume $G^{r+}=1$. Then, 
there exists a commutative diagram
\[
   \xymatrix{
     \Hom_{\Fp}
     \left(G^r, \Fp\right) \ar[r]^-{\ch} \ar[d] & \Hom_{\overline{F}}\left(\frakm^r_{K_s}/\frakm^{r+}_{K_s}, \rmH_1\left(L_{\overline{F}/\calO_K}\right)\right) \ar[d] &  \\
   \Hom_{\Fp}\left(G^r_{\log}, \Fp\right)\ar[r]^-{\rsw} & \Hom_{\overline{F}}\left(\frakm^r_{K_s}/\frakm^{r+}_{K_s}, \Omega^1_F(\log)\otimes_F\overline{F}\right)\rlap{,}
   }
\]
where the right vertical map is induced from the composition of the maps
\[
\rmH_1(L_{\overline{F}/\calO_K})\lra \Omega^1_F\otimes_{F}\overline{F}\lra \Omega^1_F(\log) \otimes_{F}\overline{F}.
\]
\end{prop}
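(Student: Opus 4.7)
The plan is to establish the commutativity by comparing the geometric constructions underlying $\ch$ and $\rsw$ via a natural morphism $Q\to P$ from a log smooth ambient scheme $Q$ (used for $\rsw$) to a smooth ambient scheme $P$ (used for $\ch$), both containing $T$ as minimal (exact) immersions. This is parallel to Lemma~\ref{3.4}, but here the log and non-log objects are compared at the same filtration level $r$, so no degree shift or $\frakm_K^{-1}$ twist appears (in contrast to Proposition~\ref{3.2}).

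First, extending the local construction in the proof of Lemma~\ref{3.4}, I would construct $Q\to P$ by realizing $Q$ as an open subscheme of a dilatation of $P$: starting from a minimal immersion $T\to P$ to a smooth $S$-scheme, the dilatation $P^{[1]}$ admits an open subscheme $Q$ that is log smooth over $S$ with $T\to Q$ a minimal exact immersion, and the projection from the dilatation to $P$ gives $Q\to P$. This morphism induces $Q^{[r]}\to P^{[r]}$ and $Q^{(r)}\to P^{(r)}$, and hence a morphism of extensions
\[
\xymatrix{
0\ar[r]& G^r_{\log}\ar[r]\ar@{^{(}->}[d]& \Phi^{(r)\circ}_{L/K,\log,\overline{F}}\ar[r]\ar[d]& \Theta^{(r)\circ}_{L/K,\log,\overline{F}}\ar[r]\ar[d]& 0\\
0\ar[r]& G^r\ar[r]& \Phi^{(r)\circ}_{L/K,\overline{F}}\ar[r]& \Theta^{(r)\circ}_{L/K,\overline{F}}\ar[r]& 0
}
\]
of smooth group schemes, in which the left vertical map is the natural inclusion.

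By the functoriality of the Artin--Schreier pushforward $[\Phi]$ applied to this diagram, for any $\chi\in\Hom(G^r,\Fp)$, the pullback of $[\Phi](\chi)$ along $\Theta^{(r)\circ}_{\log}\to\Theta^{(r)\circ}$ equals $[\Phi_{\log}](\chi|_{G^r_{\log}})$. Under the identifications of the $\Ext$ groups with $\Hom_{\overline{F}}(\frakm^r_{K_s}/\frakm^{r+}_{K_s},\Tor_1)$, this pullback is induced by the natural morphism $\Omega^1_{\calO_L/\calO_K}\to\Omega^1_{\calO_L/\calO_K}(\log/\log)$. It then remains to verify the commutativity of the diagram
\[
\xymatrix{
\Tor_1^{\calO_L}\left(\Omega^1_{\calO_L/\calO_K},\overline{F}\right)\ar[r]^-{\eqref{2.8}}\ar[d] & \rmH_1\left(L_{\overline{F}/\calO_K}\right)\ar[d]\\
\Tor_1^{\calO_L}\left(\Omega^1_{\calO_L/\calO_K}(\log/\log),\overline{F}\right)\ar[r]^-{\eqref{1.10}} & \Omega^1_F(\log)\otimes_F\overline{F}\rlap{,}
}
\]
where the right vertical arrow is the composition from the statement of Proposition~\ref{3.3}.

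The main obstacle is the final diagram chase, which requires reconciling the map \eqref{2.8} (derived from the distinguished triangle of cotangent complexes for $\Spec E\to\Spec\calO_L\to\Spec\calO_K$) with the map \eqref{1.10} (extracted from the log exact sequence \eqref{1.7}), and verifying that the composition $\rmH_1\to\Omega^1_F\otimes\overline{F}\to\Omega^1_F(\log)\otimes\overline{F}$ arises naturally from the geometric morphism between these two constructions. A secondary technical point is extending the construction of $Q\to P$ beyond the separable residue field case considered in Lemma~\ref{3.4}, which can presumably be handled by allowing $P$ to have higher relative dimension, or alternatively by a functorial descent using Proposition~\ref{3.1} combined with direct verification in sufficiently generic cases.
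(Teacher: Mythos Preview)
Your overall strategy is correct and matches the paper's: build a morphism $Q\to P$ compatible with the two minimal immersions of $T$, deduce the morphism of extensions $\Phi^{(r)\circ}_{\log}\to\Phi^{(r)\circ}$ over $\Theta^{(r)\circ}_{\log}\to\Theta^{(r)\circ}$ by functoriality of dilatations and normalizations, and then verify the compatibility square relating \eqref{2.8} and \eqref{1.10}.

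The one place where the paper does something simpler than you propose is the construction of $Q\to P$. You suggest extending the dilatation construction from Lemma~\ref{3.4}, realizing $Q$ as an open in $P^{[1]}$; as you note, that argument uses monogenicity of $\calO_L$ over $\calO_K$, which is not available here. The paper avoids this entirely: it takes a minimal immersion $T\to P$ and a minimal exact immersion $T\to Q$ \emph{independently}, and then, since $P$ is smooth over $S$, lifts $T\to P$ to an \'etale neighborhood of $T$ in $Q$. Replacing $Q$ by this neighborhood gives the desired $Q\to P$ with no hypothesis on the residue extension. This disposes of your ``secondary technical point'' in one line and makes the descent via Proposition~\ref{3.1} unnecessary.

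For the final square, the paper reduces (using the injectivity of $\Omega^1_F(\log)\otimes_F E\to\Omega^1_Q(\log)\otimes_{\calO_Q}E$) to a diagram built from the conormal sheaves $N_{T/P}$, $N_{T/Q}$, $N_{E/P}$, $N_{E/Q}$ and then checks each sub-square via the functoriality of conormal sheaves and of the cotangent complex along the maps $E\to\calO_L$, $Q\to P$, $P_F\to P$, and $Q_{F,\red}\to Q$. This is exactly the ``reconciling'' you flag as the main obstacle, and the paper carries it out explicitly; your identification of what needs to be checked is accurate.
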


\begin{proof}[Proof \textup{(Saito)}]
We show that there exists a commutative diagram
\begin{equation}\label{3.9}
\vcenter{
   \xymatrix{
  0\ar[r]&G^r_{\log}\ar[r]\ar[d]& \Phi_{L/K,\log,F'}^{(r)\circ} \ar[r] \ar[d] &\Theta_{L/K,\log,F'}^{(r)\circ}\ar[d]\ar[r]&0  \\
 0\ar[r]&G^r\ar[r]&\Phi_{L/K,F'}^{(r)\circ}\ar[r] & \Theta_{L/K,F'}^{(r)\circ}\ar[r]&0
      }}
\end{equation}
of extensions of smooth group schemes. 
We may take a minimal immersion $T\to P$ to a smooth scheme over $S=\Spec \calO_K$ and a minimal exact immersion $T\to Q$ to a log smooth scheme over $S$. By replacing $Q$ by an \'etale neighborhood, we may assume that there exists a morphism $Q\to P$. Let $K'$ be a finite separable extension of $K$ such that the closed fibers of $P_{S'}^{(r)}$ and $Q_{S'}^{(r)}$ are reduced. By the functoriality of dilatations and normalizations, we have a commutative diagram
\begin{equation}\label{3.7}
\vcenter{
   \xymatrix{
    Q_{S'}^{(r)} \ar[r] \ar[d] & Q_{S'}^{[r]}\ar[d] &  \\
   P_{S'}^{(r)}\ar[r] & P_{S'}^{[r]}\rlap{.}
      }}
\end{equation}
Since $Q\to P$ induces a morphism 
\begin{equation*}
\vcenter{
   \xymatrix{
    0\ar[r]&N_{T/P} \ar[r] \ar[d]& \Omega^1_{P/S}\otimes_{\calO_P}\calO_L \ar[r]\ar[d] &  \Omega^1_{T/S}\ar[r]\ar[d]& 0\\
    0\ar[r]&N_{T/Q} \ar[r] & \Omega^1_{Q/S}(\log/\log)\otimes_{\calO_Q}\calO_L \ar[r] &  \Omega^1_{T/S}(\log/\log)\ar[r]&0      
    }}
\end{equation*}
of free resolutions, we obtain a commutative diagram
\begin{equation}\label{3.8}
\vcenter{
   \xymatrix{
    \Tor_1^{\calO_L}\left(\Omega^1_{\calO_L/\calO_K},E\right) \ar[r]^-{\cong} \ar[d] & N_{T/P}\otimes_{\calO_L}E \ar[d]\\
   \Tor_1^{\calO_L}\left(\Omega^1_{\calO_L/\calO_K}(\log/\log),E\right)  \ar[r]^-{\cong} & N_{T/Q}\otimes_{\calO_L}E\rlap{,} 
            }}
\end{equation}
where the isomorphisms are \eqref{1.6} and \eqref{2.6}.
The diagram
\begin{equation}
\vcenter{
   \xymatrix{
    \Phi_{L/K,\log,F'}^{(r)} \ar[d] & Q_{F'}^{(r)}\ar[l]_-{\cong} \ar[r]\ar[d]&Q_{F',\red}^{[r]}\ar[d]\ar[r]^-{\cong}&\Theta_{L/K,\log,F'}^{(r)}\ar[d]  \\
    \Phi_{L/K,F'}^{(r)}& P_{F'}^{(r)}\ar[l]_-{\cong} \ar[r]&P_{F',\red}^{[r]}\ar[r]^-{\cong}&\Theta_{L/K,F'}^{(r)}
      }}
\end{equation}
is commutative by (\ref{3.7}), (\ref{3.8}) and the functoriality of normalizations and dilatations. 
Hence the diagram (\ref{3.9}) is commutative and defines a commutative diagram
\[
   \xymatrix{
   \Hom_{\Fp}\left(G^{r}, \Fp\right)  \ar[r]^-{[\Phi]} \ar[d] & \ar[d]  \Hom_{\overline{F}}\left(\frakm^r_{K_s}/\frakm^{r+}_{K_s},  \Tor_1^{\calO_L}\left(\Omega^1_{\calO_L/\calO_K},\overline{F}\right)\right)\\
  \Hom_{\Fp}\left(G^r_{\log}, \Fp\right) \ar[r]^-{[\Phi_{\log}]} &\Hom_{\overline{F}}\left(\frakm^r_{K_s}/\frakm^{r+}_{K_s}\right), \Tor_1^{\calO_L}\left(\Omega^1_{\calO_L/\calO_K}(\log/\log),\overline{F}\right)\rlap{.}
   }
\]
Hence it suffices to show that the diagram
\[
   \xymatrix{
    \Tor_1^{\calO_L}\left(\Omega^1_{\calO_L/\calO_K},E\right) \ar[r] \ar[d] & \rmH_1\left(L_{E/\calO_K}\right) \ar[d]  \\
   \Tor_1^{\calO_L}\left(\Omega^1_{\calO_L/\calO_K}(\log/\log),E\right)\ar[r] & \Omega^1_F(\log)\otimes_F E
   }
\]
is commutative, where $E$ denotes the residue field of $L$. We deduce from the injectivity of the map $\Omega^1_F(\log)\otimes_{F} E\to \Omega^1_Q(\log)\otimes_{\calO_Q} E$ and the commutative diagrams
\begin{equation*}
\vcenter{
   \xymatrix{
    \Tor_1^{\calO_L}\left(\Omega^1_{\calO_L/\calO_K},E\right) \ar[r] \ar[d] & N_{T/P}\otimes_{\calO_L}E \ar[d] &   \Tor_1^{\calO_L}(\Omega^1_{\calO_L/\calO_K}(\log/\log),E)  \ar[r] \ar[d] & N_{T/Q}\otimes_{\calO_L}E \ar[d]\\
   \rmH_1\left(L_{E/\calO_K}\right)\ar[r] & N_{E/P}\rlap{,} & \Omega^1_F(\log)\otimes_F E\ar[r] & \Omega^1_Q(\log)\otimes_{\calO_Q} E      }}
\end{equation*}
that it suffices to show that the diagram
\begin{equation*}
\vcenter{
   \xymatrix{
    \Tor_1^{\calO_L}\left(\Omega^1_{\calO_L/\calO_K},E\right) \ar[r] \ar[d] & N_{T/P}\otimes_{\calO_T}E \ar[r]\ar[d]&N_{E/P}\ar[d]
   &&\rmH_1\left(L_{E/\calO_K}\right)\ar[d]\ar[ll]\\
   \Tor_1^{\calO_L}\left(\Omega^1_{\calO_L/\calO_K}(\log/\log),E\right)  \ar[r] & N_{T/Q}\otimes_{\calO_L}E  \ar[r]& N_{E/Q}\ar[r]&\Omega^1_Q(\log)\otimes_{\calO_Q} E& \Omega^1_F(\log)\otimes_{F} E\ar[l]
            }
            }
\end{equation*}
is commutative. The left square is commutative by (\ref{3.8}). The middle square is commutative by the functoriality of conormal sheaves. The right square is commutative since 
the diagram
\[
\xymatrix{
\rmH_1\left(L_{E/\calO_{K}}\right)\ar[r]\ar[d]&\rmH_1\left(L_{E/F}\right)\ar[r]\ar[d]&\Omega^1_F\otimes_FE\ar[r]\ar[d]&\Omega^1_F(\log)\otimes_FE\ar[d]\\
N_{E/P}\ar[r]\ar[d]&N_{E/P_F}\ar[r]\ar[d]&\Omega^1_{P_F}\otimes_{\calO_{P_F}}E\ar[r]\ar[d]&\Omega^1_Q(\log)\otimes_FE\\
N_{E/Q}\ar[r]&N_{E/Q_{F,\red}}\ar[r]&\Omega^1_{Q_{F,\red}}\otimes_{\calO_{Q_{F,\red}}}E\ar[ur]&
}
\]
is commutative by the functoriality of cotangent complexes.
\end{proof}

\section{Proof of the rationality and the integrality}\label{sec:5}

In this section, we prove Theorems~\ref{2.2} and~\ref{2.1}. 

\begin{proof}[Proof of Theorem~\ref{2.2}]
Let $L$ be a finite abelian extension such that $\chi$ factors through $G=\Gal(L/K)$.  Let $\pi$ be a uniformizer, and let $(v_i)_{i\in I}$ be a family of elements of $\calO_K$ such that $(dv_i)_{i\in I}$ forms a basis of $\Omega^1_F$. We put $m=\dt(\chi)$. First we consider the case where the character $\chi$ is of type~\ref{I}. If we put
\[
\rsw(\chi)=\left(\alpha d\log \pi+\sum_{i\in I}\beta_idv_i\right)/\pi^{m-1}, 
\]
then we have 
\[
\ch(\chi)=(\alpha w\pi)/\pi^{m}
\]
by Proposition~\ref{3.2}, and the assertion follows from Proposition~\ref{1.3}. 

Second we consider the case where the character $\chi$ is of type~\ref{II}. If we put 
\[
\ch(\chi)=\left(\alpha w \pi+\sum_{i\in I}\beta_iwv_i\right)/\pi^m,
\]
then we have 
\[
\rsw(\chi)=\left(\sum_{i\in I}\beta_idv_i\right)/\pi^{m}
\]
by Proposition~\ref{3.3}. We see that the $\beta_i$ are contained in $F$ by Proposition~\ref{1.3}. We show that $\alpha$ is contained in $F^{1/p}$.  We define the discrete valuation ring $\calO_{K'}$ by
\[
\calO_{K'}=\calO_K[w_i]_{i\in I}/\left(w_i^p-v_i\right)_{i\in I}
\]
and let $K'$ be the fraction field of $\calO_{K'}$. Then the residue field $F'$ of $K'$ is $F^{1/p}$. Let $L'=LK'$ be the composite field and $G'=\Gal(L'/K')$ be the Galois group. 
The map $\rmH_1(L_{\overline{F}/\calO_{K}})\to\rmH_1(L_{\overline{F'}/\calO_{K'}})$ sends $w\pi$ to $w\pi$ and the other basis elements
to 0. The diagram 
\[
   \xymatrix{
    \Hom_{\Fp}\left(G^{m}, \Fp\right) \ar[r]^-{\ch} \ar[d] & \Hom_{\overline{F}}\left(\frakm^{m}_{K_{s}}/\frakm^{m+}_{K_{s}}, \rmH_1\left(L_{\overline{F}/\calO_{K}}\right)\right)\ar[d] &  \\
   \Hom_{\Fp}\left(G'^{m}, \Fp\right)\ar[r] & \Hom_{\overline{F'}}\left(\frakm^{m}_{K'_{s}}/\frakm^{m+}_{K'_{s}}, \rmH_1\left(L_{\overline{F'}/\calO_{K'}}\right)\right)
   }
\]
is commutative by the functoriality; see \cite[Equation~(4.17)]{Sa20}. Here, the lower horizontal arrow is $\ch$ if $G'^m\ne 1$ and zero if $G'^m=1$. If the coefficient of $w\pi$ is not zero, then the image of $\ch\chi$ by the right vertical arrow is not zero. Hence $G'^{m}$ is not trivial, and thus we have $\dt(\chi')=m$.  Let $\chi'$ be the image of the character $\chi$ by the left vertical arrow. Then the characteristic form $\ch(\chi')$ is of the form
\[
\ch(\chi')=\alpha\cdot w\pi/\pi^{m}. 
\]
If the character $\chi'$ is of type~\ref{II}, then the refined Swan conductor of $\chi'$ is zero and we have a contradiction. Hence the character $\chi'$ is of type~\ref{I}. By the first case, we have $\alpha 
\in {F'}=F^{1/p}$. 
\end{proof}

Next, we prove Theorem $\ref{2.1}$. We prepare the following lemma. 

\begin{lemma}\label{4.1}
We use the notation of Theorem~{\rm\ref{2.1}} and assume the dimension of $A$ is $2$. We define the regular local ring $A'$ of dimension $2$ by 
\[
A'=A[u_2,y_l]_{1\le l\le c}/\left(u_2^p-\pi_2, y_l^p-x_l\right)_{1\le l\le c}.
\]
The maximal ideal of $A'$ is generated by $\pi_1$ and $u_2$. 
Let $K'_1$ be the local field of $A'$ at the prime ideal generated by $\pi_1$, and let $K'_2$ be the local field of $A'$ at the prime ideal generated by $u_2$. Let $L_i$ be a finite abelian extension of $K_i$ $(i=1,2)$ such that $\chi|_{K_i}$ factors through $G_i=\Gal(L_i/K_i)$. Let $L'_i=L_iK'_i$ be the composite field, and put $G'_i=\Gal(L'_i/K'_i)$. Let $F_i$ and $F'_i$ be the residue fields of\, $K_i$ and $K'_i$, respectively. Let $U'$ be the pullback of\, $U$ by $\Spec A'\to \Spec A$, and let $\chi'\in \rmH^1(U',\Q/\Z)$ be the pullback of $\chi$. We put $m'_i=\dt(\chi'|_{K'_i})$. 

\begin{enumerate}
  \item\label{4.1-1} Assume that $\chi|_{K_1}$ is wildly ramified. If the coefficient of $w\pi_1$ in $\ch(\chi|_{K_1})$ is not zero, then we have $m'_1=m_1$ and the character $\chi'|_{K'_1}$ is of type~\ref{I}. If the coefficient of $w\pi_1$ in $\ch(\chi|_{K_1})$ is zero, then we have $m'_1<m_1$. 

  \item\label{4.1-2} Assume that $\chi|_{K_2}$ is of type~\ref{II}. If the coefficient of $w\pi_1$ in $\ch(\chi|_{K_2})$ is not zero, then we have $m'_2=pm_2$ and the character $\chi'|_{K'_2}$ is of type~\ref{II}. If the coefficient of $w\pi_1$ is zero, we have $\sw(\chi'|_{K'_2})<pm_2$. 
\end{enumerate}
\end{lemma}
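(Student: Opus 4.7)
The plan is to exploit the functoriality of the characteristic form and of the refined Swan conductor under the base change $A\to A'$, in the spirit of the proof of Theorem~\ref{2.2}.

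For part~\eqref{4.1-1}, the extension $K'_1/K_1$ has ramification index~$1$ since $\pi_1$ remains a uniformizer in $A'$. The functorial map $\rmH_1(L_{\overline{F_1}/\calO_{K_1}})\to \rmH_1(L_{\overline{F'_1}/\calO_{K'_1}})$ preserves $w\pi_1$ and annihilates both $w\pi_2=w(u_2^p)$ and $wx_l=w(y_l^p)$, since Frobenius--Witt differentials kill $p$-th powers (exactly the computation already carried out in the proof of Theorem~\ref{2.2}). Hence the image of $\ch(\chi|_{K_1})$ equals $\alpha w\pi_1/\pi_1^{m_1}$. Combining the commutative functoriality diagram used in the proof of Theorem~\ref{2.2} with the standard inequality $m'_1\le m_1$ valid for extensions of ramification index~$1$, we obtain $m'_1=m_1$ and $\ch(\chi'|_{K'_1})=\alpha w\pi_1/\pi_1^{m_1}$ when $\alpha\ne 0$, and $m'_1<m_1$ when $\alpha=0$. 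To see that $\chi'|_{K'_1}$ is then of type~\ref{I}: if it were of type~\ref{II}, Proposition~\ref{3.3} would give $\rsw(\chi'|_{K'_1})$ as the image of $\ch(\chi'|_{K'_1})$ under $\rmH_1\to\Omega^1_{F'_1}(\log)$, and this image vanishes because $w\pi_1\mapsto 0$ ($\pi_1$ being a uniformizer of $K'_1$). The injectivity of $\rsw$ on the wildly ramified character $\chi'|_{K'_1}$ then yields a contradiction.

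For part~\eqref{4.1-2}, the extension $K'_2/K_2$ has ramification index~$p$ since $\pi_2=u_2^p$. Because $\chi|_{K_2}$ is of type~\ref{II}, Proposition~\ref{3.3} identifies
\[
\rsw(\chi|_{K_2})=\left(\alpha_1 d\pi_1+\sum_l \beta_l dx_l\right)/\pi_2^{m_2}
\]
in $\Omega^1_{F_2}(\log)\otimes\overline{F_2}$. Under the functorial map $\Omega^1_{F_2}(\log)\otimes\overline{F_2}\to \Omega^1_{F'_2}(\log)\otimes\overline{F'_2}$, which scales the level by $p$, each $dx_l=d(y_l^p)$ vanishes in characteristic $p$ and $\pi_2^{m_2}$ becomes $u_2^{pm_2}$, so the image is $\alpha_1 d\pi_1/u_2^{pm_2}$. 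When $\alpha_1\ne 0$ this is nonzero, and combining with the general inequality $\sw(\chi'|_{K'_2})\le p\sw(\chi|_{K_2})=pm_2$ yields $\sw(\chi'|_{K'_2})=pm_2$ and $\rsw(\chi'|_{K'_2})=\alpha_1 d\pi_1/u_2^{pm_2}$; when $\alpha_1=0$ the same functoriality forces $\sw(\chi'|_{K'_2})<pm_2$, which is the second assertion of the part.

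The main obstacle is to exclude type~\ref{I} for $\chi'|_{K'_2}$ in the case $\alpha_1\ne 0$. Suppose otherwise; then $\dt(\chi'|_{K'_2})=pm_2+1$, and Proposition~\ref{3.2} identifies $\ch(\chi'|_{K'_2})$ with the image of $\rsw(\chi'|_{K'_2})$ under the composition
\[
\Omega^1_{F'_2}(\log)\otimes\overline{F'_2}\xrightarrow{\res\otimes 1}\overline{F'_2}\xrightarrow{w}\rmH_1(L_{\overline{F'_2}/\calO_{K'_2}})\otimes_{\calO_{K'_2}}\frakm_{K'_2}^{-1}.
\]
But $\alpha_1 d\pi_1/u_2^{pm_2}$ has no $d\log u_2$ component, so its residue with respect to the uniformizer $u_2$ vanishes, yielding $\ch(\chi'|_{K'_2})=0$ and contradicting the injectivity of $\ch$ on a wildly ramified character. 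Therefore $\chi'|_{K'_2}$ is of type~\ref{II} and $m'_2=pm_2$, completing the argument.
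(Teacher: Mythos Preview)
Your proof is correct and follows essentially the same route as the paper: both parts rest on the functoriality of $\ch$ (for part~\eqref{4.1-1}) and of $\rsw$ (for part~\eqref{4.1-2}) under the base change $A\to A'$, together with the observation that the relevant transition maps kill all basis elements except $w\pi_1$ (respectively $d\log\pi_1$).

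The one genuine difference is in how you establish that $\chi'|_{K'_2}$ is of type~\ref{II} when $\alpha_1\ne 0$. The paper simply invokes the non-logarithmic inequality $m'_2=\dt(\chi'|_{K'_2})\le p\cdot\dt(\chi|_{K_2})=pm_2$, which combined with $\sw(\chi'|_{K'_2})=pm_2\le m'_2$ forces $m'_2=pm_2$ immediately. You instead assume type~\ref{I} and derive a contradiction via Proposition~\ref{3.2}: the characteristic form would be the image of $\rsw(\chi'|_{K'_2})$ under the residue map, which vanishes since $d\pi_1$ has no $d\log u_2$ component. Your argument is slightly longer but has the mild advantage of only using the logarithmic base-change inequality $\sw(\chi')\le e\cdot\sw(\chi)$ (cited later in the paper as \cite[Proposition~5.1.1]{KS19}), whereas the paper's one-line conclusion relies on the analogous non-logarithmic bound, which is stated without reference.
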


\begin{proof}
  \eqref{4.1-1}~ The map $\rmH_1(L_{\overline{F_1}/\calO_{K_1}})\to\rmH_1(L_{\overline{F'_1}/\calO_{K'_1}})$ sends $w\pi_1$ to $w\pi_1$ and the other basis elements
  to 0. The diagram 
\[
   \xymatrix{
    \Hom_{\Fp}\left(G^{m_1}_1, \Fp\right) \ar[r]^-{\ch} \ar[d] & \Hom_{\overline{F_1}}\left(\frakm^{m_1}_{K_{1,s}}/\frakm^{m_1+}_{K_{1,s}}, \rmH_1\left(L_{\overline{F_1}/\calO_{K_1}}\right)\right)\ar[d] &  \\
    \Hom_{\Fp}\left(G'^{m_1}_1, \Fp\right)\ar[r] & \Hom_{\overline{F'_1}}\left(\frakm^{m_1}_{K'_{1,s}}/\frakm^{m_1+}_{K'_{1,s}}, \rmH_1\left(L_{\overline{F'_1}/\calO_{K'_1}}\right)\right)
   }
\]
is commutative by the functoriality, see \cite[Equation~(4.17)]{Sa20}, where the lower horizontal arrow is $\ch$ if $G'^{m_1}_1\ne 1$ and zero if $G'^{m_1}_1= 1$. The first assertion follows from the same argument as in the proof of Proposition~\ref{2.2}. If the coefficient of $w\pi_1$ in $\ch(\chi|_{K_1})$ is zero, then the image of $\ch\chi|_{K_1}$ by the right vertical arrow is zero. Hence we have $G'^{m_1}_1\subset \Ker\chi'|_{K'_1}$, and we have $m'_1<m_1$. 

\eqref{4.1-2}~ The map $\Omega^1_{F_2}(\log)\to\Omega^1_{F'_2}(\log)$ sends $d\log\pi_1$ to $d\log\pi_1$ and the other basis elements to 0. Since $\chi|_{K_2}$ is of type~\ref{II}, we have $\sw(\chi|_{K_2})=m_2$. The diagram 
\[
   \xymatrix{
    \Hom_{\Fp}\left(G_{2,\log}^{m_2}, \Fp\right) \ar[r]^-{\rsw} \ar[d] & \Hom_{\overline{F_2}}\left(\frakm^{m_2}_{K_{2,s}}/\frakm^{m_2+}_{K_{2,s}}, \Omega^1_{F_2}(\log)\otimes_{F_2}\overline{F_2}\right)\ar[d] &  \\
   \Hom_{\Fp}\left(G'^{pm_2}_{2,\log}, \Fp\right)\ar[r] & \Hom_{\overline{F'_2}}\left(\frakm^{pm_2}_{K'_{2,s}}/\frakm^{pm_2+}_{K'_{2,s}}, \Omega^1_{F'_2}(\log)\otimes_{F'_2}\overline{F'_2}\right)
   }
\]
is commutative by the functoriality, see \cite[Equation~(4.17)]{KS19}, where the lower horizontal arrow is $\rsw$ if $G'^{pm_2}_{2,\log}\ne 1$ and zero if $G'^{pm_2}_{2,\log}= 1$. Since we assume the coefficient of $w\pi_1$ is not zero, the coefficient of $d\log\pi_1$ is not zero and the image of $\rsw\chi|_{K_2}$ by the right vertical arrow is not zero. Hence $G'^{pm_2}_{2,\log}$ is not trivial, and thus we have $\sw(\chi'|_{K'_2})=pm_2$. Since the inequality $m'_2\le pm_2$ holds, we obtain $m'_2=pm_2$, and the character $\chi|_{K_2}$ is of type~\ref{II}. If the coefficient of $w\pi_1$ in $\ch(\chi|_{K_2})$ is zero, then the image of $\rsw\chi|_{K_2}$ by the right vertical arrow is zero. Hence we have $G'^{pm_2}_{2,\log}\subset \Ker \chi'|_{K'_2}$, and we have $\sw(\chi'|_{K'_2})<pm_2$. 
\end{proof}

\begin{proof}[Proof of Theorem~\ref{2.1}]
Since $A/\pi_j$ is regular, it suffices to show that $\alpha_{i,j}^p,\beta_{l,j}$ are elements of $(A/\pi_j)_{\mathfrak{q}}$ for any prime ideal $\mathfrak{q}$ of height~$1$ of $A/\pi_j$. By replacing $A$ by $A_{\mathfrak{q}}$, we may assume $\dim A=2$.  We use the notation of Lemma~{\rm\ref{4.1}}.

We divide the proof into six cases.
\begin{enumerate}[wide, label=(\alph*), ref=\alph*]
\item\label{case-a} \emph{The case where $r=1$ and $\chi|_{K_1}$ is of type~\ref{I}.}~ 
In this case, the characteristic form $\ch(\chi|_{K_1})$ is the image of the refined Swan conductor $\rsw(\chi|_{K_1})$ by Proposition~\ref{3.2}. If we put
\[
\rsw(\chi|_{K_1})=\left(\alpha_1d\log \pi_1+\alpha_2d\pi_2+\sum_{1\le l\le c}\beta_ldx_l\right)/\pi_1^{m_1-1},
\]
then we have
\[
\ch(\chi|_{K_1})=\alpha_1 w\pi_1/\pi_1^{m_1}.
\]
Since $\alpha_1$ is in $A/\pi_1$ by Theorem~\ref{1.1}, the assertion follows.

\item\label{case-b} \emph{The case where $r=1$ and $\chi|_{K_1}$ is of type~\ref{II}.}~
In this case, the refined Swan conductor $\rsw(\chi|_{K_1})$ is the image of the characteristic form $\ch(\chi|_{K_1})$ by Proposition~\ref{3.3}. 
Hence, if we put
\[
\ch(\chi|_{K_1})=\left(\alpha_1w \pi_1+\alpha_2w\pi_2+\sum_{1\le l\le c}\beta_lwx_l\right)/\pi_1^{m_1},
\]
then we have 
\[
\rsw(\chi|_{K_1})=\left(\alpha_2d\pi_2+\sum_{1\le l\le c}\beta_ldx_l\right)/\pi_1^{m_1}.
\]
This implies $\alpha_2, \beta_l\in A/\pi_1$ by Theorem~\ref{1.1}. 
It remains to prove $\alpha_1^p\in A/\pi_1$. If $\alpha_1=0$, then the assertion holds, so we may assume $\alpha_1$ is not $0$. Then we have $m'_1=m_1$ and 
\[
\ch(\chi'|_{K'_1})=\alpha_1\cdot w\pi_1/\pi_1^{m_1}, 
\]
and $\chi'|_{K'_1}$ is of type~\ref{I} by Lemma~\ref{4.1}.1. Hence we have $\alpha_1\in A'/\pi_1$ by case~\eqref{case-a} applied to the triple
$(A',U',\chi')$. Since $A/\pi_1$ is of characteristic $p$, we obtain $\alpha_1^p\in A/\pi_1$. 

\item\label{case-c} \emph{The case where $r=2$ and $\chi|_{K_1}$ or $\chi|_{K_2}$ is tamely ramified.}~ 
In this case, we can prove the assertion by a similar argument to that in cases~\eqref{case-a} and~\eqref{case-b}. 

\item\label{case-d} \emph{The case where $r=2$ and $\chi|_{K_1}$ and $\chi|_{K_2}$ are both of type~\ref{I}.}~ 
 If we put
\[
\rsw(\chi|_{K_1})=\left(\alpha_{1,1}d\log \pi_1+\alpha_{2,1}d\pi_2+\sum_{1\le l\le c}\beta_{l,1}dx_l\right)/\pi_1^{m_1-1}\pi_2^{m_2-1},
\]
\[
\rsw(\chi|_{K_2})=\left(\alpha_{1,2}d\pi_1+\alpha_{2,2}d\log \pi_2+\sum_{1\le l\le c}\beta_{l,2}dx_l\right)/\pi_1^{m_1-1}\pi_2^{m_2-1},
\]
then we have
\[
\ch(\chi|_{K_1})=\pi_2\alpha_{1,1}w \pi_1/\pi_1^{m_1}\pi_2^{m_2},
\]
\[
\ch(\chi|_{K_2})=\pi_1\alpha_{2,2}w\pi_2/\pi_1^{m_1}\pi_2^{m_2}
\]
by Proposition~\ref{3.2}. Since $\alpha_{1,1}$ is in $A/\pi_1$ and $\alpha_{2,2}$ is in $A/\pi_2$ by Theorem~\ref{1.1}, the assertion follows. We note that the coefficient of $w\pi_1$ in  $\ch(\chi|_{K_1})$ is contained in $\pi_2\cdot(A/\pi_1)$. 

\item\label{case-e} \emph{The case where $r=2$ and $\chi|_{K_1}$ is of type~\ref{II} and $\chi|_{K_2}$ is of type~\ref{I}, or $\chi|_{K_1}$ is of type~\ref{I} and $\chi|_{K_2}$ is of type~\ref{II}.}~
We only consider the case where $\chi|_{K_1}$ is of type~\ref{II} and $\chi|_{K_2}$ is of type~\ref{I}. If we put
\[
\ch(\chi|_{K_1})=\left(\alpha_{1,1}w \pi_1+\alpha_{2,1}w\pi_2+\sum_{1\le l\le c}\beta_{l,1}wx_l\right)/\pi_1^{m_1}\pi_2^{m_2},
\]
\[
\rsw(\chi|_{K_2})=\left(\alpha_{1,2}d\log\pi_1+\alpha_{2,2}d\log \pi_2+\sum_{1\le l\le c}\beta_{l,2}dx_l\right)/\pi_1^{m_1}\pi_2^{m_2-1},
\]
then we have
\[
\rsw(\chi|_{K_1})=\left(\alpha_{2,1}d\log \pi_2+\sum_{1\le l\le c}\pi_2^{-1}\beta_{l,1}dx_l\right)/\pi_1^{m_1}\pi_2^{m_2-1},
\]
\[
\ch(\chi|_{K_2})=\alpha_{2,2}w \pi_2/\pi_1^{m_1}\pi_2^{m_2}
\]
by Propositions~\ref{3.2} and~\ref{3.3}. By Theorem~\ref{1.1}, we have $\alpha_{2,1}, \beta_{l,1}\in A/\pi_1$ and $\alpha_{2,2}\in A/\pi_2$ and the equalities $\alpha_{2,1}=\alpha_{2,2}$ and $\beta_{l,1}=0$ in $A/(\pi_1)+(\pi_2)$.  Hence it suffices to show that $\alpha_{1,1}^p\in A/\pi_1$ and $\alpha_{1,1}^p=0$ in $A/(\pi_1)+(\pi_2)$. 

If $\alpha_{1,1}=0$, then the assertion holds, so we may assume $\alpha_{1,1}$ is not $0$. Then the characteristic form $\ch(\chi'|_{K'_1})$ is of the form 
\[
\ch(\chi'|_{K'_1})=\alpha_{1,1}\cdot w\pi_1/\pi_1^{m_1}u_2^{pm_2}=u_2^{m'_2-pm_2}\alpha_{1,1}\cdot w\pi_1/\pi_1^{m_1}u_2^{m'_2}, 
\]
and $\chi'|_{K'_1}$ is of type~\ref{I} by Lemma~\ref{4.1}.1. Since we assume $\chi|_{K_2}$ is of type~\ref{I}, we have $\sw(\chi|_{K_2})=m_2-1$. Since the ramification index of the extension $K'_2/K_2$ is $p$, we have $\sw(\chi'|_{K'_2})\le p(m_2-1)$ by \cite[Proposition 5.1.1]{KS19}. Thus we have $m'_2-pm_2<0$. 
Hence we have $\alpha_{1,1}\in u_2\cdot(A'/\pi_1)$ by case~\eqref{case-c} applied to the pair
% triple? 
$(A',U',\chi')$ if $\chi'|_{K'_2}$ is tamely ramified, by case~\eqref{case-d}  if $\chi'|_{K'_2}$ is of type~\ref{I}, and by the first half of the argument in case~\eqref{case-e} if $\chi'|_{K'_2}$ is of type~\ref{II}. 
Hence we obtain $\alpha_{1,1}^p\in \pi_2\cdot(A/\pi_1)$. 

\item\label{case-f} \emph{The case where $r=2$ and $\chi|_{K_1}$ and $\chi|_{K_2}$ are both of type~\ref{II}.}~
If we put
\[
\ch(\chi|_{K_1})=\left(\alpha_{1,1}w \pi_1+\alpha_{2,1}w\pi_2+\sum_{1\le l\le c}\beta_{l,1}wx_l\right)/\pi_1^{m_1}\pi_2^{m_2},
\]
\[
\ch(\chi|_{K_2})=\left(\alpha_{1,2}w \pi_1+\alpha_{2,2}w\pi_2+\sum_{1\le l\le c}\beta_{l,2}wx_l\right)/\pi_1^{m_1}\pi_2^{m_2},
\]
then we have
\[
\rsw(\chi|_{K_1})=\left(\pi_2\alpha_{2,1}d\log\pi_2+\sum_{1\le l\le c}\beta_{l,1}dx_l\right)/\pi_1^{m_1}\pi_2^{m_2},
\]
\[
\rsw(\chi|_{K_2})=\left(\pi_1\alpha_{1,2}d\log \pi_1+\sum_{1\le l\le c}\beta_{l,2}dx_l\right)/\pi_1^{m_1}\pi_2^{m_2}
\]
by Proposition~\ref{3.3}. By Theorem~\ref{1.1}, we have $\beta_{l,1}=\beta_{l,2}$ in $A/(\pi_1)+(\pi_2)$. It suffices to show $\alpha_{1,1}^p\in A/\pi_1$,  $\alpha_{1,2}^p\in A/\pi_2$ and $\alpha_{1,1}^p=\alpha_{1,2}^p\in A/(\pi_1)+(\pi_2)$ since the assertion corresponding to $\alpha_{2,1}$ and $\alpha_{2,2}$ is proved by switching $\pi_1$ and $\pi_2$.

If $\alpha_{1,1}\ne 0$ and $\alpha_{1,2}\ne 0$, then we have $\dt(\chi'|_{K'_1})=m_1$ and $\dt(\chi'|_{K'_2})=pm_2$ by Lemma~\ref{4.1}. Further, $\ch(\chi'|_{K'_1})$ is of type~\ref{I} and $\ch(\chi'|_{K'_2})$ is of type~\ref{II},  and we have
\[
\ch(\chi'|_{K'_1})=\alpha_{1,1}w \pi_1/\pi_1^{m_1}u_2^{pm_2},
\]
\[
\ch(\chi'|_{K'_2})=\alpha_{1,2}w \pi_1/\pi_1^{m_1}u_2^{pm_2}
\]
by Lemma~\ref{4.1}. By  case~\eqref{case-e}, we have $\alpha_{1,1}\in A'/\pi_1$, $\alpha_{1,2}\in A'/u_2$ and $\alpha_{1,1}=\alpha_{1,2}\in A'/(\pi_1)+(u_2)$. Hence we obtain $\alpha_{1,1}^p\in A/\pi_1, \alpha_{1,2}^p\in A/\pi_2$ and $\alpha_{1,1}^p=\alpha_{1,2}^p\in A/(\pi_1)+(\pi_2)$.

If $\alpha_{1,1}\ne 0$ and $\alpha_{1,2}=0$, then we have 
\[
\ch(\chi'|_{K'_1})=\alpha_{1,1}w \pi_1/\pi_1^{m_1}u_2^{pm_2}=u_2^{m'_2-pm_2}\alpha_{1,1}w\pi_1/\pi_1^{m_1}u_2^{m'_2}
\]
and $\chi'|_{K'_1}$ is of type~\ref{I} by Lemma~\ref{4.1}\eqref{4.1-1}.  
If $\chi'|_{K'_2}$ is tamely ramified, then we have $u_2^{m'_2-pm_2}\alpha_{1,1}\in A'/\pi_1$ by  case~\eqref{case-c} and $m'_2-pm_2=1-pm_2<0$. 
If $\chi'|_{K'_2}$ is of type~\ref{I}, then we have $u_2^{m'_2-pm_2}\alpha_{1,1}\in u_2\cdot(A'/\pi_1)$ by the last note in  case~\eqref{case-d} and $m'_2-pm_2=1+\sw(\chi'|_{K'_2})-pm_2\le 0$ by Lemma~\ref{4.1}\eqref{4.1-2}. 
If $\chi'|_{K'_2}$ is of type~\ref{II}, then we have $u_2^{m'_2-pm_2}\alpha_{1,1}\in A'/\pi_1$ by case~\eqref{case-e} and $m'_2-pm_2=\sw(\chi'|_{K'_2})-pm_2< 0$ by Lemma~\ref{4.1}\eqref{4.1-2}. 
Hence we have $\alpha_{1,1}\in u_2\cdot(A'/\pi_1)$ in any case, and we obtain $\alpha_{1,1}^p\in \pi_2\cdot(A/\pi_1)$. 

If $\alpha_{1,1}=0$ and $\alpha_{1,2}\ne 0$, then we prove $\alpha_{1,2}^p\in \pi_1\cdot(A/\pi_2)$ by  induction on $m_1=\dt(\chi|_{K_1})>1$. 
By Lemma~\ref{4.1}\eqref{4.1-1}, we have $m'_1<m_1$, and by Lemma~\ref{4.1}\eqref{4.1-2}, we have 
\[
\ch(\chi'|_{K'_2})=\alpha_{1,2}w \pi_1/\pi_1^{m_1}u_2^{pm_2}=\pi_1^{m'_1-m_1}\alpha_{1,2}w \pi_1/\pi_1^{m'_1}u_2^{pm_2}, 
\]
and $\chi'|_{K'_2}$ is of type~\ref{II}.  
If $\chi'|_{K'_1}$ is tamely ramified or of type~\ref{I}, the assertion is true by  case~\eqref{case-c}  or~\eqref{case-e}, respectively. If $\chi'|_{K'_1}$ is of type~\ref{II},  we have $\pi_1^{p(m'_1-m_1)}\alpha_{1,2}^p\in \pi_1\cdot(A'/u_2)$ by the induction hypothesis. Hence we have $\alpha_{1,2}\in \pi_1\cdot(A'/u_2)$, and we obtain $\alpha_{1,2}^p\in \pi_1\cdot(A/\pi_2)$.  \qedhere 
\end{enumerate}
\end{proof}

\section{F-characteristic cycle}\label{sec:6}
In this section, we define the F-characteristic cycle of a rank~$1$ sheaf on a regular  surface as a cycle on the FW-cotangent bundle. We prove that the intersection with the 0-section computes the Swan conductor of cohomology. We give an example of the F-characteristic cycle. 

\subsection{Refined Swan conductor and characteristic form of a rank~1 sheaf}
Let $K$ be a discrete valuation field of characteristic 0 with residue field $F$ of characteristic $p > 0$. Let $X$ be a regular flat separated scheme of finite type over the valuation ring $\calO_K$ of $K$, and let $D$ be a divisor with simple normal crossings. 
Let $\{D_i\}_{i\in I}$ be the irreducible components of $D$, and let $K_i$ be the local field at the generic point $\frakp_i$ of $D_i$. 
Let $U$ be the complement of $D$. 
Let $\chi$ be an element of $\rmH^1(U,\Q/\Z)$. 
We define the Swan conductor divisor $R_{\chi}$ of $\chi$ by 
\[
R_\chi=\sum_{i\in I} \sw(\chi|_{K_i})D_i
\]
and denote the support of $R_\chi$ by $Z_\chi$. We note that $Z_{\chi}$ is contained in the closed fiber of $X$. Indeed, if $D_i$ intersects the generic fiber of $X$, the character $\chi|_{K_i}$ is tamely ramified since the characteristic of $K$ is zero.
 
By Theorem~\ref{1.1}, there exists a unique global section
\[
\rsw(\chi)\in \Gamma\left(Z_{\chi}, \Omega^1_X(\log D)(R_{\chi})|_{Z_{\chi}}\right)
\]
such that the germ $\rsw(\chi)_{\frakp_i}$ of $\rsw(\chi)$ is $\rsw(\chi|_{K_i})$ if the character $\chi|_{K_i}$ is wildly ramified. We call $\rsw(\chi)$ the {\it refined Swan conductor} of $\chi$.

\begin{definition}[\textit{cf.} {\cite[Definition 4.2]{Ka94}}]
Let $x$ be a closed point of $Z_\chi$. For $i\in I$ satisfying $x\in D_i\subset Z_\chi$, we define $\ord(\chi;x,D_i)$ to be the maximal integer $n\ge 0$ such that
\[
\rsw(\chi)|_{D_i,x}\in \frakm_x^n\Omega^1_X(\log D)(R_\chi)|_{D_i,x}, 
\]
where $m_x$ is the maximal ideal of $\calO_{X,x}$. We say that $(X,U,\chi)$ is \textup{clean} at $x$ if the integer $\ord(\chi;x,D_i)$ is zero for every $i\in I$ satisfying $x\in D_i\subset Z_\chi$. We say that $(X,U,\chi)$ is \textup{clean} if $(X,U,\chi)$ is clean at every closed point in $Z_\chi$. 
\end{definition}

We define the total dimension divisor $R'_{\chi}$ by
\[
R'_{\chi}=\sum_{i\in I}\dt\left(\chi|_{K_i}\right)D_i.
\]
By Proposition~\ref{2.3} and Theorem~\ref{2.1}, there exists a unique global section
\begin{equation}\label{5.16}
\ch(\chi)\in \Gamma\left(Z_{\chi}, F\Omega^1_X\left(pR'_{\chi}\right)|_{Z_{\chi}}\right)
\end{equation}
such that the germ $\ch(\chi)_{\frakp_j}$ of $\ch(\chi)$ is 
\[
\left(\sum_{1\le i\le d}\alpha_{i,j}^p w\pi_i+\sum_{1\le l\le c}\beta_{l,j}^pwx_l\right)/\pi_1^{pm_1}\cdots \pi_r^{pm_r}
\]
using the notation of Theorem~\ref{2.1} if the character $\chi|_{K_j}$ is wildly ramified. We call $\ch(\chi)$ the {\it characteristic form} of $\chi$.

\begin{definition}\label{5.18}
Let $x$ be a closed point of $Z_\chi$. For $i\in I$ satisfying $x\in D_i\subset Z_\chi$, we define $n'$ to be the maximal integer $n'\ge 0$ such that
\[
\ch(\chi)|_{D_i,x}\in \frakm_x^{n'}F\Omega^1_X(pR'_\chi)|_{D_i,x}, 
\]
where $m_x$ is the maximal ideal of $\calO_{X,x}$. We define $\ord'(\chi;x,D_i)$ by $\ord'(\chi;x,D_i)=n'/p$. We say that $(X,U,\chi)$ is \textup{non-degenerate} at $x$ if $\ord'(\chi;x,D_i)$ is zero for every $i\in I$ satisfying $x\in D_i\subset Z_\chi$, and we say that $(X,U,\chi)$ is \textup{non-degenerate} if $(X,U,\chi)$ is non-degenerate at every point at $x \in Z_{\chi}$.
\end{definition}

\begin{remark}
By  definition, $p\cdot \ord'(\chi;x,D_i)$ is an integer,  but $\ord'(\chi;x,D_i)$ may not be an integer. Assume that the characteristic of the residue field of $K$ is 2, and put $e=\ord_K2$. We consider the scheme ${X=\Spec\calO_K[T, (1+\pi^{2(e-1)}T^3)^{-1}]}$. Let $U$ be the  generic fiber $\Spec K[T, (1+\pi^{2(e-1)}T^3)^{-1}]$,  and let ${\chi\in \rmH^1(U,\mathbf{F}_2)}$ be the Kummer character defined by $t^2=1+\pi^{2(e-1)}T^3$. Then we have 
\[
\ch(\chi)=\frac{T^4\cdot wT-T^3\cdot w(2/\pi^{e-1})}{\pi^4} 
\]
and  $\ord'(\chi, x, X_F)=3/2$,  where $x$ denotes the closed point defined by $(\pi, T)$. 

Similarly to Remark~\ref{2.11}, we can expect that $\ord'(\chi;x, D_i)$ is an integer if the characteristic of the residue field of $K$ is not 2. 
\end{remark}

\subsection{F-characteristic cycle}
Let $K$ be a complete discrete valuation field of characteristic 0 with perfect residue field $F$ of characteristic $p > 0$. Let $X$ be a regular flat separated scheme of finite type over the valuation ring $\calO_K$ of $K$, and let $D$ be a divisor with simple normal crossings. 
We assume that $X$ is purely of dimension~$2$. 
Let $D_1,\dots, D_n$ be the irreducible components of $D$,  and let $K_i$ be the local field at the generic point $\frakp_i$ of $D_i$. We put $U=X-D$ and let $\map{j}{U}{X}$ be the open immersion. Let $X_F$ and $D_F$ be the closed fibers of $X$ and $D$. We fix a finite field $\Lambda$ of characteristic $l\ne p$. Let $\calF$ be a locally constant constructible sheaf of $\Lambda$-modules of rank 1 on $U$, and let $\map{\chi}{\pi_1^{\ab}(U)}{\Lambda^\times}$ be the corresponding character. We fix an inclusion $\Lambda^\times \to \Q/\Z$ and regard $\chi$ as an element of $\rmH^1(U,\Q/\Z)$. 

Let $I_{T,\chi}, I_{W,\chi}, I_{\textup{I},\chi},I_{\II,\chi}$ be the subsets of $I$ consisting of the $i\in I$ such that $\chi|_{K_i}$ is tamely ramified, wildly ramified, of type~\ref{I} and of type~\ref{II}, respectively. For a closed point $x$ in $D$, let $I_x$ be the subset of $I$ consisting of $i\in I$ such that $x\in D_i$ and $I_{*,\chi, x}$ be $I_{*,\chi}\cap I_x$, where $*=W,T,\textup{I},\II$. 
Let $Z_{\II,\chi}$ be the union $\cup_{i\in I_{\II,\chi}}D_i$. 

We define the sub--vector bundle $L_{i,\chi}$ of $T^*X(\log D)|_{D_i}$ for $i\in I_{W,\chi}$ as the image of the multiplication by the refined Swan conductor of $\chi$, 
\[
\times \rsw(\chi)|_{D_i}\colon \calO_X\left(-R_{\chi}\right)\otimes_{\calO_X}\calO_{D_i}\left(\sum_{x\in D_i}\ord(\chi;x,D_i)[x]\right)\lra\Omega^1_X(\log D)|_{D_i}. 
\]

\begin{definition}[\textit{cf.} {\cite[Equation~(3.4.4)]{Ka94}}]
Assume that $(X,U,\chi)$ is clean. We define the \textup{logarithmic characteristic cycle} $\CC^{\log}j_!\calF$ as a cycle on the logarithmic cotangent bundle $T^*X(\log D)|_{D_F}$ by 
\[
\CC^{\log}j_!\calF=\left[T^*_XX(\log D)|_{D_F}\right]+\sum_{i\in I_{W,\chi}}\sw\left(\chi|_{K_i}\right)\left[L_{i,\chi}\right], 
\]
where $T^*_XX(\log D)|_{D_F}$ denotes the \textup{0}-section of $T^*X(\log D)|_{D_F}$. 
\end{definition}

In the case $\dim X=2$, we define the logarithmic characteristic cycle without the assumption on the cleanness of  $(X,U,\chi)$. By \cite[Theorem 4.1]{Ka94}, there exist successive blowups $\map{f}{X'}{X}$ at closed points such that $f^{-1}(U)$ is isomorphic to $U$
via $f$ and $(X',f^{-1}(U), f^*\chi)$ is clean. Let $D'$ be the inverse image of~$D$, and let
\[
T^*X(\log D)|_{D_F}\xleftarrow{\mathrm{pr}}T^*X(\log D)|_{D_F}\times_{D_F}{D'_F}\xrightarrow{df^D}T^*X'(\log D')|_{D'_F}
\]
be the algebraic correspondence. We define $\CC^{\log}j_!\calF-[T^*_XX(\log D)|_{D_F}]$ to be the pushforward by pr of the pullback of $\CC^{\log}j'_!f^*\calF-[T^*_{X'}{X'}(\log D')|_{D'_F}]$ by $df^D$. This is independent of the choice of blowups by \cite[Remark 5.7]{Ka94}. We define the logarithmic characteristic cycle as
\begin{equation}\label{5.15}
\CC^{\log}j_!\calF=\left[T^*_XX(\log D)|_{D_F}\right]+\sum_{i\in I_{W,\chi}}\sw\left(\chi|_{K_i}\right)\left[L_{i,\chi}\right]+\sum_{x\in D_F}s_x[T_x^*X(\log D)], 
\end{equation}
where $T_x^*X(\log D)$ denotes the fiber at $x$.

\begin{thm}[Conductor formula, \textit{cf.} {\cite[Corollary 7.5.3 and Theorem 8.3.7]{KS13}}]\label{5.1}
Assume $\dim X=2$ and $X$ is proper over $\calO_K$. Then we have 
\begin{equation*}
\left(\CC^{\log}j_!\calF-\left[T^*_XX(\log D)|_{D_F}\right], T^*_XX(\log D)|_{D_F}\right)_{T^*X(\log D)|_{D_F}}=-\Sw_K \left(X_{\overline{K}}, j_!\calF\right)+\Sw_K\left(X_{\overline{K}}, j_!\Lambda\right), 
\end{equation*}
where $\Sw_K(X_{\overline{K}}, j_!\calF)$ denotes the alternating sum $\sum_{m\ge 0} (-1)^m\Sw_K\rmH^m(X_{\overline{K}}, j_!\calF)$. 
\end{thm}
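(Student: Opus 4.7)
The plan is to deduce this statement as a direct consequence of Kato--Saito's conductor formula, after reducing to the clean case where the log characteristic cycle takes its simpler form.

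First I would reduce to the case where $(X,U,\chi)$ is clean. By \cite[Theorem~4.1]{Ka94} applied to the surface $X$, there exist successive blowups $f\colon X'\to X$ at closed points in $D_F$ such that $f^{-1}(U)\xrightarrow{\sim} U$ and $(X', f^{-1}(U), f^*\chi)$ is clean. By proper base change we have $\rmH^m(X_{\overline K}, j_!\calF) \cong \rmH^m(X'_{\overline K}, j'_!f^*\calF)$, so the right-hand side of the desired equality is unchanged. On the left-hand side, the definition~\eqref{5.15} of $\CC^{\log}j_!\calF$ is engineered so that the correction terms $s_x[T^*_xX(\log D)]$ measure precisely the discrepancy between the pullback via $df^D$ and the pushforward via $\mathrm{pr}$; an application of the projection formula shows that the intersection numbers against the zero sections on $X$ and $X'$ agree, which is essentially the content of \cite[Remark~5.7]{Ka94}.

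Once $(X,U,\chi)$ is clean, I would invoke \cite[Corollary~7.5.3]{KS13}, which expresses $\Sw_K(X_{\overline K}, j_!\calF) - \Sw_K(X_{\overline K}, j_!\Lambda)$ as the degree on $X_F$ of the localized second Chern class associated with the pair $(j_!\calF, j_!\Lambda)$. The next step is to identify this localized class, pulled back to the log cotangent bundle, with the intersection appearing on the left-hand side. This is the content of \cite[Theorem~8.3.7]{KS13}: under the cleanness hypothesis the localized Chern class is represented by $\sum_{i\in I_{W,\chi}}\sw(\chi|_{K_i})[L_{i,\chi}]$ on $T^*X(\log D)|_{D_F}$, and its intersection with the zero section $T^*_XX(\log D)|_{D_F}$ computes a weighted sum of self-intersections $(D_i, D_i)_{X_F}$ on the closed fiber. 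Combining with the log-cotangent analog of the excess-intersection computation for $L_{i,\chi}$, one recovers exactly the claimed right-hand side, up to the sign dictated by the duality between the Swan conductor and the localized Chern class.

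The main obstacle is managing the sign conventions and compatibility of the various formulations: Kato--Saito's formula in \cite{KS13} is written in terms of characteristic classes rather than cycles on the log cotangent bundle, and matching the combinatorics of the subbundles $L_{i,\chi}$ with the intersection-theoretic expression for the localized Chern class requires a careful local computation at each $D_i$. A second subtlety is ensuring that the blowup reduction is compatible with both the Kato--Saito class (which has its own functorial behavior under blowups) and the cycle $\CC^{\log}j_!\calF$ as defined through the correspondence in~\eqref{5.15}; these two functorialities must be shown to agree so that the clean-case identification propagates back to the original $X$.
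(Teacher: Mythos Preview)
Your proposal is correct and follows essentially the same route as the paper: reduce to the clean case by successive blowups (invariance of both sides), then identify the left-hand intersection with $-\deg c_\chi$ in the notation of \cite[Equation~(8.3.0.1)]{KS13} and conclude by \cite[Corollary~7.5.3 and Theorem~8.3.7]{KS13}. Your description of what Theorem~8.3.7 yields is slightly off---it does not reduce to ``a weighted sum of self-intersections $(D_i,D_i)_{X_F}$'' but rather directly matches the intersection number with $-\deg c_\chi$---but this is a cosmetic imprecision in how you summarize the cited result, not a gap in the argument.
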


\begin{proof}
There exist successive blowups $\map{f}{X'}{X}$ at closed points such that $(X', f^{-1}(U), f^*\chi)$ is clean. Since  both sides do not change after replacing $X$ by $X'$, we may assume $(X,U,\chi)$ is clean. The intersection 
\[\left(
\CC^{\log}j_!\calF-[T^*_XX(\log D)|_{D_F}], T^*_XX(\log D)|_{D_F}\right)_{T^*X(\log D)|_{D_F}}
\]
equals $-\deg c_\chi$ with the notation in \cite[Equation~(8.3.0.1)]{KS13}. Hence the assertion follows by \cite[Corollary~7.5.3 and Theorem 8.3.7]{KS13}.
\end{proof}

We define the sub--vector bundle $L'_{i,\chi}$ of $FT^*X|_{D_i}$ for $i\in I_{W,\chi}$ as the image of the multiplication by the characteristic form of $\chi$, 
\[
\times \ch(\chi)|_{D_i}\colon\calO_X\left(-pR'_{\chi}\right)\otimes_{\calO_X}\calO_{D_i}\left(\sum_{x\in D_i}p\ord'(\chi;x,D_i)[x]\right)\lra F\Omega^1_X|_{D_i}. 
\]
For $i\in I_{T,\chi}$, we define $L'_{i,\chi}$ to be $\rmF^*(T^*_{D_i}X|_{D_{i,F}})$, where $\rmF^*$ is the pullback by the Frobenius $\map{\rmF}{D_{i,F}}{D_{i,F}}$.

\begin{definition}\label{5.17}
Assume $\dim X=2$. We define the \textup{F-characteristic cycle} $\FCC j_!\calF$ as a cycle on the FW-cotangent bundle $FT^*X|_{X_F}$ by 
\begin{equation}\label{5.2}
\FCC j_!\calF=-\left(\frac{1}{p}\left[FT_X^*X|_{X_F}\right]+\sum_{i\in I}\dt(\chi|_{K_i})\left[L'_{i,\chi}\right]+\sum_{x\in D_F}pt_x\left[\rmF^*T_x^*X\right]\right), 
\end{equation}
where $FT_X^*X|_{X_F}$ denotes the \textup{0}-section of $FT^*X|_{X_F}$ and 
\begin{equation}\label{5.10}
t_x=\#I_x-1+s_x+\sum_{i\in I_{W,x}}\sw\left(\chi|_{K_i}\right)(\ord'(\chi;x,D_i)-\ord(\chi;x,D_i))
+\sum_{i\in I_{\II,x}}(\ord(\chi;x,D_i)+1-\#I_x).
\end{equation}
Here, the integer $s_x$ is the coefficient of the fiber at $x$ in $\CC^{\log}j_!\calF$; see \eqref{5.15}. 
\end{definition}

The integrality of the characteristic form (Theorem~\ref{2.1}) is necessary to define $t_x$ for all closed points $x\in D_F$. 

\begin{lemma}  
Let $\map{h}{W}{X}$ be an \'etale morphism, and let $\map{j'}{W\times_XU}{W}$ be the base change of $j$. Then we have
\[
\FCC j'_!h^*\calF=h^*\FCC j_!\calF. 
\]
\end{lemma}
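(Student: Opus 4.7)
The plan is to verify that each summand in the definition \eqref{5.2} of $\FCC j_!\calF$ is compatible with étale pullback along $\map{h}{W}{X}$. Since $h$ is étale, the preimage $h^{-1}(D)$ is again a divisor with simple normal crossings whose irreducible components $\{D'_j\}_{j\in I'}$ lie over the $D_i$. For each $j\in I'$ mapping to $i\in I$, the local ring homomorphism $\calO_{X,\frakp_i}\to\calO_{W,\frakp'_j}$ on generic points is étale, so it induces an unramified extension $K_i\subset K'_j$ of complete discrete valuation fields. Under such an unramified extension the total dimension, Swan conductor, and the dichotomy between characters of type~\ref{I} and~\ref{II} are all preserved, the refined Swan conductor pulls back through the natural map $\Omega^1_{F_i}(\log)\to\Omega^1_{F'_j}(\log)$, and the characteristic form pulls back through the natural map $\rmH_1(L_{\overline{F_i}/\calO_{K_i}})\to\rmH_1(L_{\overline{F'_j}/\calO_{K'_j}})$; these are direct consequences of the functoriality statements used throughout Sections~\ref{sec:2} and~\ref{sec:3}.

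Second, étale pullback is compatible with the FW-cotangent bundle: by \cite[Corollary~4.12]{Sa22-1}, the natural map $h^*F\Omega^1_X\to F\Omega^1_W$ is an isomorphism, and hence $FT^*X|_{X_F}$ pulls back to $FT^*W|_{W_F}$. Consequently $h^*[FT^*_XX|_{X_F}]=[FT^*_WW|_{W_F}]$; each sub-bundle $L'_{i,\chi}$ pulls back (via the compatibility of $\ch(\chi)$ from the previous paragraph, combined with the manifest functoriality of $\rmF^*(T^*_{D_i}X|_{D_{i,F}})$ in the tame case $i\in I_{T,\chi}$) to $\bigsqcup_{j/i}L'_{j,h^*\chi}$; and each point fiber satisfies $h^*[\rmF^*T^*_xX]=\sum_{y\in h^{-1}(x)}[\rmF^*T^*_yW]$. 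Combined with the equality $\dt(h^*\chi|_{K'_j})=\dt(\chi|_{K_i})$, this yields the correct contributions for the first two summands of \eqref{5.2}.

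The remaining task, which is the crux, is to show $t_y=t_x$ for every closed point $y\in h^{-1}(x)$ in the third summand. Each ingredient of \eqref{5.10} is étale-local at the closed point: the cardinality $\#I_x$ is preserved because the components of $h^{-1}(D)$ through $y$ correspond bijectively to those of $D$ through $x$; the orders $\ord(\chi;x,D_i)$ and $\ord'(\chi;x,D_i)$ are preserved since étale pullback does not change the order of vanishing of a section of a locally free sheaf at a closed point; and the Swan conductors, together with membership in $I_{W,x}$ and $I_{\II,x}$, are preserved by the unramified-extension observation above. The delicate term is $s_x$, which is defined via the non-clean construction of $\CC^{\log}j_!\calF$ by successive blowups of closed points from \cite[Theorem~4.1]{Ka94}. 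Here étale compatibility reduces to the fact that blowups of closed points commute with étale base change: if $\map{f}{X'}{X}$ is the cleanness-resolving blowup sequence, then $W':=W\times_XX'\to W$ is the corresponding sequence of blowups of the preimages, the triple $(W',W'\times_XU,h^*\chi)$ is clean, and the fiber cycle above $y$ is the $\mathrm{pr}$-pushforward of the $df^D$-pullback of the same cycle used to define $s_x$ above $x$.

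Assembling these compatibilities summand by summand yields $\FCC j'_!h^*\calF=h^*\FCC j_!\calF$. I expect the main obstacle to be the identity $s_y=s_x$: although formal, it requires a careful tracking through the blowup–pushforward definition of $\CC^{\log}$ in \cite{Ka94}. By contrast, the étale compatibility of the refined Swan conductor, the characteristic form, and the associated ramification invariants is a straightforward unravelling of functoriality.
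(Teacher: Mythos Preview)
Your argument is correct and follows the same approach as the paper: both verify term-by-term that the ingredients of Definition~\ref{5.17} are preserved under \'etale pullback. The paper's proof is extremely terse (it records only the invariance of $\sw$, $\dt$, $\rsw$, $\ch$ and then appeals directly to the definition), whereas you spell out in addition the compatibility of the FW-cotangent bundle and, more importantly, the invariance of $s_x$ via commutation of blowups with \'etale base change---a point the paper leaves implicit in the phrase ``follows from Definition~\ref{5.17}.''
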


\begin{proof}
Let $K'_i$ be the local field at the generic point of $h^*D_i$. Then we have $\sw(\chi|_{K_i})=\sw((h^*\chi)|_{K'_i})$, $\dt(\chi|_{K_i})=\dt((h^*\chi)|_{K'_i})$,  $h^*\rsw(\chi)=\rsw(h^*\chi)$ and $h^*\ch(\chi)=\ch(h^*\chi)$. 
Hence the assertion follows from Definition~\ref{5.17}. 
\end{proof}

\begin{remark}
The F-characteristic cycle $\FCC j_!\calF$ is equal to
\[
-\left(\frac{1}{p}\left[FT_X^*X|_{X_F}\right]+\sum_{i\in J}\left[\rmF^*\left(T^*_{D_i}X|_{D_{i,F}}\right)\right]+\sum_{i\in J'}\dt(\chi|_{K_i})\left[L'_{i,\chi}\right]+\sum_{x\in D_F}pt_x\left[\rmF^*T_x^*X\right]\right), 
\]
where $J$ denotes the subset of $I$ consisting of the $i\in I$ such that $D_i\cap X_K$ is not empty and $J'$ denotes $I-J$.  Then $(1/p)[FT_X^*X|_{X_F}]+\sum_{i\in J}[\rmF^*(T^*_{D_i}X|_{D_{i,F}})]$ is a 1-cycle, and $\sum_{i\in J'}\dt(\chi|_{K_i})[L'_{i,\chi}]+\sum_{x\in D_F}pt_x[\rmF^*T_x^*X]$ is a 2-cycle. Later, we consider the difference $\FCC j_!\calF-\FCC j_!\Lambda$. This  is a 2-cycle, so the intersection number with the 0-section is defined. 
\end{remark}

\begin{remark}
In this remark, we consider the equal-characteristic case. 
Let $X$ be a smooth scheme over a perfect field $k$ of characteristic $p>0$. For simplicity, we assume $p\ne 2$. Let $D=\cup_{i\in I}D_i$ be a divisor with simple normal crossings, and let $\map{j}{U=X-D}{X}$ be the open immersion. Let $\calF$ be a locally constant sheaf of $\Lambda$-modules of rank~$1$ on $U$. Then the characteristic cycle $CCj_!\calF$ is defined as a cycle on the cotangent bundle $T^*X$. 
If the dimension of $X$ is~$2$, we have 
\[
CCj_!\calF=\left[T_X^*X\right]+\sum_{i\in I}\dt(\chi|_{K_i})\left[L''_{i,\chi}\right]+\sum_{x\in D_F}t_x[T_x^*X]
\]
by \cite[Theorem 6.1]{Ya20}. Here, $L''_{i,\chi}$ denotes the vector bundle defined by the characteristic form in the sense of \cite{Ya20}, and $t_x$ is defined in \cite{Ya20} by the same form as in~\eqref{5.10}.

Let $\map{\rmF}{X}{X}$ be the Frobenius. If we put 
\[
\FCC j_!\calF=-\left(\frac{1}{p}\left[\rmF^*T_X^*X\right]+\sum_{i\in I}\dt(\chi|_{K_i})\left[\rmF^*L''_{i,\chi}\right]+\sum_{x\in D_F}pt_x[\rmF^*T_x^*X]\right)
\]
as a cycle on $FT^*X\cong \rmF^*T^*X$, then we have 
\[
\rmF_*\FCC j_!\calF=-p\cdot CCj_!\calF, 
\]
where $\rmF_*$ denotes the pushforward by the projection $\rmF^*T^*X\to T^*X$. 
\end{remark}

The rationality of the characteristic form (Theorem~\ref{2.2}) implies the integrality of the coefficients of the fibers in the F-characteristic cycle.

\begin{lemma}
The coefficients $pt_x$ of the fibers $[\rmF^*T^*_xX]$ in the F-characteristic cycle \eqref{5.2} are integers. If\, $(X,U,\chi)$ is clean at $x\in D_F$, we have $t_x\ge 0$. 
\end{lemma}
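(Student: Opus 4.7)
The plan is to verify the two assertions directly from Definition~\ref{5.17}.

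For the integrality of $pt_x$, multiply the defining expression of $t_x$ by $p$. Every constituent is then an integer: $\#I_x$, $s_x$, $\sw(\chi|_{K_i})$, $\ord(\chi;x,D_i)$, and $\ord(\chi;x,D_i)+1-\#I_x$ are integers; and by Definition~\ref{5.18}, $p\cdot\ord'(\chi;x,D_i)=n'\in\Z_{\ge 0}$. The essential input is Theorem~\ref{2.1}: the $p$-power integrality of the coefficients of $\ch(\chi)$ is precisely what makes $p\cdot\ord'$ an integer.

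For the non-negativity of $t_x$ under cleanness, begin with the two simplifications provided by cleanness: $\ord(\chi;x,D_i)=0$ for every $i\in I_{W,x}$ by definition, and $s_x=0$ because the iterated blow-up procedure defining $s_x$ is trivial on a datum that is already clean. The formula collapses to
\[
t_x = (\#I_x-1)\bigl(1-|I_{\II,x}|\bigr) + \sum_{i\in I_{W,x}} \sw(\chi|_{K_i})\,\ord'(\chi;x,D_i).
\]
Since $\dim X=2$ and $D$ has simple normal crossings, $\#I_x\in\{1,2\}$. The case $\#I_x=1$ is immediate: the first summand vanishes and $t_x=\sum\sw\,\ord'\ge 0$.

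The case $\#I_x=2$ hinges on the following key observation, which uses the standing assumption (Section~\ref{sec:6}.2) that the residue field $F$ of $\calO_K$ is perfect: under this perfectness, cleanness at a point where two components of $D$ cross forbids any of the corresponding characters from being of type~\ref{II}. Indeed, $k=A/\frakm_x$ is a finite extension of the perfect field $F$ and is therefore itself perfect, so the $p$-basis lifts $x_l$ of Theorem~\ref{2.1} are absent in local coordinates at $x$. If some $\chi|_{K_{i_0}}$ with $i_0\in I_x$ were of type~\ref{II}, then by Proposition~\ref{3.3} the $d\log\pi_{i_0}$ coefficient of $\rsw(\chi|_{K_{i_0}})$ vanishes, while the explicit formula of Theorem~\ref{2.1} (case~(f) or its analogue) shows that the only other nonzero coefficient has the shape $\pi_j\,\alpha_{j,i_0}\,d\log\pi_j$ for the unique $j\in I_x\setminus\{i_0\}$; this vanishes at $x$ since $\pi_j(x)=0$. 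Hence $\rsw(\chi)|_{D_{i_0},x}=0$, so $\ord(\chi;x,D_{i_0})\ge 1$, contradicting cleanness. Therefore $|I_{\II,x}|=0$ and $t_x=1+\sum\sw\,\ord'\ge 1$.

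Combining the two cases gives $t_x\ge 0$ under cleanness. The main conceptual obstacle is the observation that perfectness of $F$ forces the absence of type~\ref{II} characters at multi-component points of $D$ in the clean setting; once this is in place, the case analysis is mechanical, and the remaining work amounts to inspecting the coefficient structure supplied by Theorem~\ref{2.1} and Proposition~\ref{3.3}.
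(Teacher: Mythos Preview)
Your integrality argument is correct and essentially matches the paper's. Your non-negativity argument has the right shape but contains an overclaim that makes the case $\#I_x=2$ go wrong as stated.

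The error is the assertion that cleanness at $x$ with $\#I_x=2$ forces $|I_{\II,x}|=0$. You justify this by saying that for $i_0\in I_{\II,x}$ the coefficient of $d\log\pi_j$ in $\rsw(\chi|_{K_{i_0}})$ has the shape $\pi_j\alpha_{j,i_0}$, hence vanishes at $x$. This is true in case~(f) of the proof of Theorem~\ref{2.1}, where \emph{both} characters are of type~\ref{II}. But in case~(e), where $\chi|_{K_{i_0}}$ is of type~\ref{II} and $\chi|_{K_j}$ is of type~\ref{I}, the computation there gives
\[
\rsw(\chi|_{K_{i_0}})=\bigl(\alpha_{j,i_0}\,d\log\pi_j+\textstyle\sum_l\pi_j^{-1}\beta_{l,i_0}\,dx_l\bigr)/\pi_{i_0}^{m_{i_0}}\pi_j^{m_j-1},
\]
so the $d\log\pi_j$-coefficient is $\alpha_{j,i_0}$ with no $\pi_j$ factor. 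With perfect residue field at $x$ there are no $x_l$ terms, but $\alpha_{j,i_0}$ can perfectly well be a unit at $x$, so cleanness is not contradicted. Thus $|I_{\II,x}|=1$ can occur.

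The good news is that you do not need $|I_{\II,x}|=0$; you only need $|I_{\II,x}|\le 1$. From your own simplified formula
\[
t_x=(\#I_x-1)(1-|I_{\II,x}|)+\sum_{i\in I_{W,x}}\sw(\chi|_{K_i})\,\ord'(\chi;x,D_i),
\]
with $\#I_x=2$ the first summand is $1-|I_{\II,x}|$, which is $\ge 0$ once $|I_{\II,x}|\le 1$. So the only case to exclude is $|I_{\II,x}|=2$, and your argument restricted to case~(f) does exactly that: when both are type~\ref{II} the $d\log\pi_j$-coefficient is $\pi_j\alpha_{j,i_0}$, vanishing at $x$, contradicting cleanness. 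This is precisely how the paper argues: it reduces to ruling out $\#I_x=\#I_{\II,x}=2$ via Proposition~\ref{3.3}, without claiming anything about the mixed case.
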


\begin{proof}
In the definition \eqref{5.10} of $t_x$, the terms other than $\sw(\chi|_{K_i})\ord'(\chi;x,D_i)$ are integers. By Definition~\ref{5.18}, we see that the products $p\cdot \ord'(\chi;x,D_i)$ are integers.

If $(X,U,\chi)$ is clean at $x$, we have 
\[
t_x=\#I_x-1+\sum_{i\in I_{W,x}}\sw\left(\chi|_{K_i}\right)\cdot\ord'(\chi;x,D_i)
+\sum_{i\in I_{\II,x}}(1-\#I_x)
\]
by \eqref{5.10}. Since we have $\ord'(\chi;x,D_i)\ge 0$, we have $t_x\ge 0$ unless $\ord'(\chi;x,D_i)=0$ for all $i\in I_{W,\chi}$ and $\#I_x=\#I_{\II,x}=2$. If $\#I_x=\#I_{\II,x}=2$, we have $\rsw(\chi)_x=0$ by Proposition~\ref{3.3}, and this contradicts the assumption. 
\end{proof}

\begin{remark}
  The author conjectures that the terms $\sw(\chi|_{K_i})\ord'(\chi;x,D_i)$ are also integers and thus the $t_x$ are integers. We can check that $\sw(\chi|_{K_i})\ord'(\chi;x,D_i)$ is an integer in the following cases:
  \begin{enumerate}
    \item\label{case1} The character $\chi|_{K_i}$ is of type~\ref{I}.
    \item\label{case2} The character $\chi|_{K_i}$ is defined by a Kummer equation of degree $p$.
      \end{enumerate}
Indeed, in case~\eqref{case1}, $\ord'(\chi;x, D_i)$ is an integer. In  case~\eqref{case2}, if the character $\chi|_{K_i}$ is of type \ref{II}, the Swan conductor $\sw(\chi|_{K_i})$ is divisible by $p$. 

The author also conjectures that we have $t_x\ge 0$ even if $(X,U,\chi)$ is not clean at $x$. In the equal-characteristic case, this follows from the fact that $j_!\calF[2]$ is perverse by \cite[Proposition 5.14.1]{Sa17}. 
\end{remark}

Let $\map{\rmF}{X_F}{X_F}$ be the Frobenius. We define 
\[
\tau_D\colon F\Omega^1_X\lra \rmF^*\Omega^1_X(\log D)|_{X_F}
\]
as the composition of the maps 
\[
F\Omega^1_X\lra F\Omega^1_X/\calO_{X_F}\cdot w(p)\cong \rmF^*\Omega^1_{X_F}\lra \rmF^*\Omega^1_X(\log D)|_{X_F}, 
\]
where the middle isomorphism is the map \cite[Equation (4-1)]{Sa22-1}. 
The map $\tau_D$ defines a morphism 
\[
\tau_{D}\colon FT^*X|_{D_F}\lra\rmF^*T^*X(\log D)|_{D_F}
\]
of vector bundles over $D_F$. 

Let 
\[
\tau_{D}^!\colon\CH_2\left(FT^*X|_{D_F}\right)\lra\CH_2\left(T^*X(\log D)|_{D_F}\right)
\]
be the Gysin homomorphism for $\tau_D$.

\begin{lemma}[\textit{cf.} {\cite[Lemma 4.3(i)]{Ya20}}]\label{5.3}
Assume $\dim X=2$. Let $i$ be an element of\, $I_{\upI,\chi}$. Let $\map{\rmF}{D_i}{D_i}$ be the Frobenius.  
\begin{enumerate}
\item\label{5.3-1} We have $\dim \tau_D^{-1}(\rmF^*L_{i,\chi})=2$. 
\item\label{5.3-2} We have $L'_{i,\chi}=\rmF^*T^*_{D_i}X$. 
\item We have
\[
\tau_D^!\left(\left[\rmF^*L_{i,\chi}\right]\right)=\left[L'_{i,\chi}\right]+\sum_{x\in D_i}p\left(\ord'(\chi;x,D_i)-\ord(\chi;x,D_i)\right)[\rmF^*T^*_xX]+\sum_{x\in Z_{\II,\chi}\cap D_i}p[\rmF^*T^*_xX]
\]
 in $Z_2(\tau_D^{-1}(\rmF^*L_{i,\chi}))$.
\end{enumerate}
\end{lemma}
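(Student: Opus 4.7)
The plan is to reduce each assertion to a local computation at generic and closed points of $D_i$, exploiting the explicit relation between $\rsw(\chi)$ and $\ch(\chi)$ for type~\ref{I} characters supplied by Proposition~\ref{3.2}. In local coordinates near the generic point of $D_i$ with local equation $\pi_i$, combining Proposition~\ref{3.2} with the integral description~\eqref{5.16} yields
\[
\ch(\chi) = \alpha^p\, w\pi_i/\pi_i^{pm_i},
\]
where $\alpha$ is the residue (the $d\log\pi_i$-coefficient) of $\rsw(\chi)$ along $D_i$. The type~\ref{I} hypothesis ensures that $\alpha$ is generically nonzero on $D_i$, and this single identity is the anchor for all three parts.

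For~\eqref{5.3-2}, I would argue that the above local formula forces $L'_{i,\chi}$ to be the line bundle generated by $w\pi_i$ in $F\Omega^1_X|_{D_i}$, once the twists $\calO_X(-pR'_\chi)$ and $\calO_{D_i}\bigl(\sum_x p\,\ord'(\chi;x,D_i)[x]\bigr)$ absorb the pole and vanishing orders of $\alpha^p/\pi_i^{pm_i}$. By the construction of $\tau_D$, the kernel of $F\Omega^1_X \to \rmF^*\Omega^1_{X_F}$ restricted to $D_i$ is spanned by $w\pi_i$ and coincides with $\rmF^*T^*_{D_i}X$, so $L'_{i,\chi} = \rmF^*T^*_{D_i}X$.

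For~\eqref{5.3-1}, the strategy is to observe that $\tau_D$ is a morphism of rank-$2$ vector bundles over $D_F$ whose kernel along $D_i$ is $\rmF^*T^*_{D_i}X$. Generically on $D_i$, the line $\rmF^*L_{i,\chi}$ has a nonzero $d\log\pi_i$-component, hence is not contained in the image of $\tau_D|_{D_i}$, which consists of forms pulled back from $X_F$. Consequently $\tau_D^{-1}(\rmF^*L_{i,\chi})$ coincides generically on $D_i$ with $\ker\tau_D|_{D_i}$, giving total dimension~$2$, with possibly additional $2$-dimensional fibers at finitely many closed points that do not increase the total dimension.

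The third assertion is the core calculation, which I would approach via the excess-intersection formula applied to the Cartesian square defining the preimage. Over the dense open subset of $D_i$ where $\alpha$ is a unit and which avoids $Z_{\II,\chi}$, the Gysin pullback of $[\rmF^*L_{i,\chi}]$ produces the main component $[L'_{i,\chi}]$. At a closed point $x \in D_i$ with $\ord(\chi;x,D_i) < \ord'(\chi;x,D_i)$, the residue $\alpha$ vanishes to higher order than some other basis component of $\rsw(\chi)$, and the excess intersection contributes a fiber of multiplicity $p(\ord'(\chi;x,D_i) - \ord(\chi;x,D_i))$, the factor $p$ originating from the $p$-th power in~\eqref{5.16}. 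At $x \in Z_{\II,\chi}\cap D_i$, Theorem~\ref{2.1} supplies an additional $d\log\pi_j$-term in $\rsw(\chi)$ from the type~\ref{II} component $D_j$, while $\ch(\chi)$ carries no corresponding $w\pi_j$-term; this mismatch generates the final summand $p[\rmF^*T^*_xX]$. The hard part will be this local book-keeping at the closed points, particularly pinning down the exact multiplicity $p(\ord' - \ord)$ and the type~\ref{II} contribution; the integrality theorem (Theorem~\ref{2.1}) is essential here to guarantee that $\alpha^p$ descends to a section whose vanishing order on $D_i$ matches $p\cdot\ord'(\chi;x,D_i)$.
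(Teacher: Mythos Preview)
Your overall plan matches the paper's: both reduce to a local coordinate computation at closed points of $D_i$, with Proposition~\ref{3.2} supplying the key identity. The paper proceeds by writing down the defining ideals of $L_{i,\chi}$, $L'_{i,\chi}$, and $\tau_D^{-1}(\rmF^*L_{i,\chi})$ explicitly in coordinates $(\pi_1,\pi_2)$, treating the cases $\#I_x=1$ and $\#I_x=2$ separately. Since part~\eqref{5.3-1} shows the preimage has the expected dimension, $\tau_D^!([\rmF^*L_{i,\chi}])$ is simply the fundamental cycle of that scheme-theoretic preimage, so no excess-intersection machinery is actually needed.

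Two points in your sketch should be corrected. First, $w\pi_i$ is sent to zero by the composite $\tau_D|_{D_i}$ not because of the first step $F\Omega^1_X\to\rmF^*\Omega^1_{X_F}$ (whose kernel is generated by $w(p)$), but because of the second step: $d\pi_i=\pi_i\,d\log\pi_i$ vanishes on $D_i$ in $\Omega^1_X(\log D)$. Your conclusion for~\eqref{5.3-2} is still correct. Second, and more substantively, your mechanism for the type~\ref{II} contribution would not distinguish type~\ref{II} crossings from the others: the presence of a $d\log\pi_j$ term in $\rsw(\chi)$ and the absence of a $w\pi_j$ term in $\ch(\chi)|_{D_i}$ hold at \emph{every} crossing point $x\in D_i\cap D_j$. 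The actual source of the extra $p[\rmF^*T^*_xX]$ is the discrepancy between the twists $R_\chi$ and $R'_\chi$ along $D_j$: for type~\ref{II} one has $\dt(\chi|_{K_j})=\sw(\chi|_{K_j})$ rather than $\sw(\chi|_{K_j})+1$. In the paper's computation this appears as a parameter $\delta\in\{0,1\}$ (equal to $0$ exactly in the type~\ref{II} case) with $\ord'(\chi;x,D_i)=\ord_{\pi_2}(\alpha_1)+\delta$, and the preimage ideal works out to $(\pi_2^{p(\ord'-\ord)+p(1-\delta)}\,\partial'/\partial'\pi_2)$; the term $p(1-\delta)$ is what produces the extra fiber.
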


\begin{proof}
We may assume $I=\{1,2\}$ and $i=1$. Let $x$ be a closed point of $D_i$, and let $(\pi_1,\pi_2)$ be a local coordinate at $x$ such that $\pi_{i'}$ is a local equation of $D_i$ for $i'\in I_x$. Then $F\Omega^1_{X,x}$ is a free $O_{X,x}$-module with basis $(w\pi_1, w\pi_2)$. Its dual basis is denoted by $(\partial'/\partial'\pi_1,\partial'/\partial'\pi_2)$. 
Let $\mathcal{I},\mathcal{I}',\mathcal{J}$ be the defining ideal sheaves of $L_{1,\chi}\subset T^*X(\log D)|_{D_i}$, $L'_{i,\chi}\subset FT^*X|_{D_i}$ and $\tau_D^{-1}(\rmF^*L_{i,\chi})\subset FT^*X|_{D_i}$,  respectively. 

First, we consider the case $I_x=\{1\}$. Then $\Omega^1_X(\log D)_x$ is a free $O_{X,x}$-module with basis $(d\log \pi_1,d\pi_2)$. Its dual basis is denoted by $(\partial/\partial\log\pi_1,\partial/\partial\pi_2)$. If we put
\[
\rsw(\chi)_x=(\alpha_1d\log \pi_1+\alpha_2d\pi_2)/\pi_1^{n_1}, 
\]
where $\alpha_1,\alpha_2\in \calO_{X,x}$ and $n_1=\sw(\chi|_{K_1})$, then
\[
\mathcal{I}_x=\left(\pi_2^{-\ord(\chi;x,D_1)}\left(\alpha_2\partial/\partial\log\pi_1-\alpha_1\partial/\partial\pi_2\right)\right) 
\]
and 
\[
\mathcal{J}_x=\left(\pi_2^{-p\ord(\chi;x,D_1)}\left(-\alpha_1^p\partial/\partial\pi_2\right)\right).
\]
Since $\chi|_{K_1}$ is of type~\ref{I}, we have  
\[
\ch(\chi)_x=\alpha_1^p w\pi_1/\pi_1^{p(n_1+1)}
\]
by Proposition~\ref{3.2}, and thus we have $\ord'(\chi;x,D_i)=\ord_{\pi_2}(\alpha_1)$. Hence we have
\[
\mathcal{J}_x=\left(\pi_2^{p\ord'(\chi;x,D_1)-p\ord(\chi;x,D_1)}\partial/\partial\pi_2\right),
\]
\[
\mathcal{I}'_x=\left(\pi_2^{-p\ord'(\chi;x,D_1)}\left(-\alpha_1^p\partial'/\partial'\pi_2\right)\right)=(\partial'/\partial'\pi_2).
\]
The assertion follows.

Second, we consider the case $I_x=\{1,2\}$. Then $\Omega^1_X(\log D)_x$ is a free $O_{X,x}$-module with basis $(d\log \pi_1,d\log \pi_2)$. Its dual basis is denoted by $(\partial/\partial\log\pi_1,\partial/\partial\log\pi_2)$. If we put
\[
\rsw(\chi)_x=(\alpha_1d\log \pi_1+\alpha_2d\log\pi_2)/\pi_1^{n_1}\pi_2^{n_2}, 
\]
where $\alpha_1,\alpha_2\in \calO_{X,x}$ and $n_i=\sw(\chi|_{K_i})$, then
\[
\mathcal{I}_x=\left(\pi_2^{-\ord(\chi;x,D_1)}\left(\alpha_2\partial/\partial\log\pi_1-\alpha_1\partial/\partial\log\pi_2\right)\right),
\]
and
\[
\mathcal{J}_x=\left(\pi_2^{-p\ord(\chi;x,D_1)}\left(-\alpha_1^p\pi_2^p\partial/\partial\pi_2\right)\right).
\]
Since $i=1$ is an element of $I_{\textup{I},\chi}$, we have  
\[
\ch(\chi)|_{D_i,x}=\alpha_1^p\pi_2^{p\delta} w\pi_1/\pi_1^{p(n_1+1)}\pi_2^{p(n_2+\delta)}
\]
by Proposition~\ref{3.2}, where $\delta$ is 1 if $\chi|_{K_2}$ is tamely ramified or of type~\ref{I} and 0 if $\chi|_{K_2}$ is of type \ref{II}. Hence we have $\ord'(\chi;x,D_i)=\ord_{\pi_2}(\alpha_1)+\delta$ and 
\[
\mathcal{J}_x=\left(\pi_2^{p\ord'(\chi;x,D_1)-p\ord(\chi;x,D_1)+p(1-\delta)}\partial/\partial\pi_2\right),
\]
\[
\mathcal{I}'_x=(\partial'/\partial'\pi_2).
\]
The assertion follows. 
\end{proof}

\begin{lemma}[\textit{cf.} {\cite[Lemmas 4.4 and~4.5]{Ya20}}]\label{5.4}
Assume $\dim X=2$. Let $i$ be an element of $I_{\II,\chi}$, and let $\map{q'_i}{\tau_D^{-1}(\rmF^*L_{i,\chi})}{D_i}$ be the canonical projection. Let $\map{\rmF}{D_i}{D_i}$ be the Frobenius.  
\begin{enumerate}
\item\label{5.4-1} We have $\tau_D^{-1}(\rmF^*L_{i,\chi})=FT^*X|_{D_i}$. 
\item\label{5.4-2} We have
\[
\tau_D^!\left(\left[\rmF^*L_{i,\chi}\right]\right)={q'_i}^*\left(c_1\left(\rmF^*T^*X\left(\log D\right)|_{D_i}\right)\cap [D_i]-c_1\left(\calO_X\left(-pR_{\chi}\right)|_{D_i}\right)\cap [D_i]-\sum_{x\in D_i}p\ord(\chi;x,D_i)[x]\right)
\]
in $\CH_2(FT^*X|_{D_i})$. 
\item\label{5.4-3} We have 
\[
\left[L'_{i,\chi}\right]={q'_i}^*\left(c_1(FT^*X|_{D_i})\cap [D_i]-c_1\left(\calO_X\left(-pR'_{\chi}\right)|_{D_i}\right)\cap [D_i]-\sum_{x\in D_i}p\ord'(\chi;x,D_i)[x]\right)
\]
in $\CH_2(FT^*X|_{D_i})$.
\item\label{5.4-4} We have 
\[
\left[\rmF^*T^*_{D_i}X\right]={q'_i}^*\left(c_1(\calO_X\left(-pR_{\chi})|_{D_i}\right)\cap [D_i]+\sum_{x\in D_i}p(\ord(\chi;x,D_i)-\#I_x+1)[x]\right)
\]
in $\CH_2(FT^*X|_{D_i})$.
\end{enumerate}
\end{lemma}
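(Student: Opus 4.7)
The lemma's four parts form the type~\ref{II} counterpart to Lemma~\ref{5.3}. The common strategy is to work locally at a closed point $x\in D_i$, with local coordinates $(\pi_i,\pi_j)$ on $\calO_{X,x}$ where $\pi_j$ is either a local equation of another component $D_j$ (when $I_x=\{i,j\}$) or a transverse coordinate (when $I_x=\{i\}$). The decisive input is Proposition~\ref{3.3}: since $\chi|_{K_i}$ is of type~\ref{II}, $\rsw(\chi|_{K_i})$ is the image of $\ch(\chi|_{K_i})$ under the map $\rmH_1(L_{\overline{F_i}/\calO_{K_i}})\to\Omega^1_{F_i}(\log)\otimes_{F_i}\overline{F_i}$, which kills $w\pi_i$ (as $\pi_i$ is a uniformizer of $K_i$). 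Consequently $\rsw(\chi|_{K_i})$ has no $d\log\pi_i$-component. In parallel, a direct computation gives $\tau_D(w\pi_i)=\overline{\pi_i}^{\,p}\cdot\rmF^*(d\log\overline{\pi_i})$, which vanishes on $D_i$.

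For Part~\eqref{5.4-1}, I write $\ch(\chi)_x$ in the basis $(w\pi_i,w\pi_j)$ of $F\Omega^1_X|_x$; the remarks above place $L_{i,\chi}$ inside the sub-line-bundle of $T^*X(\log D)|_{D_i}$ complementary to $d\log\pi_i$. Since $\tau_D(w\pi_i)|_{D_i}=0$ and $\tau_D(w\pi_j)$ is a multiple of $\rmF^*(d\log\pi_j)$ (or $\rmF^*(d\pi_j)$), the image of $\tau_D|_{D_i}\colon FT^*X|_{D_i}\to\rmF^*T^*X(\log D)|_{D_i}$ lies in that same sub-line-bundle, namely $\rmF^*L_{i,\chi}$. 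Hence $\tau_D^{-1}(\rmF^*L_{i,\chi})=FT^*X|_{D_i}$.

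For Part~\eqref{5.4-2}, I apply the excess intersection formula to $\tau_D|_{D_i}$ and the codimension-$1$ regular closed embedding $\rmF^*L_{i,\chi}\hookrightarrow\rmF^*T^*X(\log D)|_{D_i}$. By~\eqref{5.4-1} the preimage has codimension $0$, so the excess bundle equals the pullback of the quotient line bundle $\rmF^*T^*X(\log D)|_{D_i}/\rmF^*L_{i,\chi}$ along $q'_i$, giving
\[
\tau_D^!([\rmF^*L_{i,\chi}])=(q'_i)^*\bigl((c_1(\rmF^*T^*X(\log D)|_{D_i})-c_1(\rmF^*L_{i,\chi}))\cap[D_i]\bigr).
\]
The defining map $\times\rsw(\chi)$ identifies $L_{i,\chi}$ with the line bundle $\calO_X(-R_\chi)|_{D_i}(\sum_x\ord(\chi;x,D_i)[x])$ on $D_i$, injective by the definition of $\ord$. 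Therefore $c_1(\rmF^*L_{i,\chi})=c_1(\calO_X(-pR_\chi)|_{D_i})+\sum_xp\,\ord(\chi;x,D_i)[x]$, yielding the stated formula.

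For Parts~\eqref{5.4-3} and~\eqref{5.4-4}, I use the standard identity: for any sub-line-bundle $M$ of a rank-$2$ vector bundle $V$ on $D_i$, one has $[M]=(q_V)^*(c_1(V/M)\cap[D_i])$, since $M$ is the vanishing locus of the tautological section of $(q_V)^*(V/M)$ on $V$. Part~\eqref{5.4-3} follows by applying this to $M=L'_{i,\chi}\subset V=FT^*X|_{D_i}$: the defining map $\times\ch(\chi)$ together with Definition~\ref{5.18} identifies $L'_{i,\chi}$ with $\calO_X(-pR'_\chi)|_{D_i}(\sum_xp\,\ord'(\chi;x,D_i)[x])$, and the formula then follows from $c_1(V/M)=c_1(FT^*X|_{D_i})-c_1(L'_{i,\chi})$. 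For Part~\eqref{5.4-4}, take $M=\rmF^*T^*_{D_i}X$, the sub-line-bundle of $FT^*X|_{D_i}$ locally generated by $w\pi_i$ (the same identification that makes $\rmF^*T^*_{D_i}X=L'_{i,\chi}$ in the type-\ref{I} case, by Lemma~\ref{5.3}\eqref{5.3-2}). Here the main obstacle is computing the Chern class of this sub-line-bundle in the required form involving $R_\chi$ and the corrections $\sum_xp(\ord(\chi;x,D_i)-\#I_x+1)[x]$: unlike in type~\ref{I}, the bundles $\rmF^*T^*_{D_i}X$ and $L'_{i,\chi}$ are genuinely distinct for type~\ref{II} characters, and obtaining the precise formula requires a careful local comparison at each closed point of $D_i$ between the generators $w\pi_i$ and $\ch(\chi)$, using Proposition~\ref{3.3} together with the normal-crossing combinatorics at the intersection points $D_i\cap D_j$.
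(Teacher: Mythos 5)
Parts (1)--(3) of your argument are correct and essentially coincide with the paper's proof. Part (1) rests, as in the paper, on Proposition~\ref{3.3}: for type~\ref{II} the coefficient of $d\log\pi_i$ in $\rsw(\chi)|_{D_i}$ vanishes, while $\tau_D(w\pi_i)|_{D_i}=0$. Your direct excess-intersection computation in (2) is an equivalent repackaging of the paper's two-step argument (first express $[\rmF^*L_{i,\chi}]$ as $q_i^*$ of a zero-cycle on $D_i$ by the self-intersection formula in the ambient bundle, then use that $\tau_D^!$ turns $q_i^*$ into ${q'_i}^*$), and (3) is identical to the paper's.

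Part (4), however, is not proved. The sub-bundle formula reduces it to the identity
\[
c_1\left(FT^*X|_{D_i}/\rmF^*T^*_{D_i}X\right)\cap[D_i]
=c_1\left(\calO_X(-pR_{\chi})|_{D_i}\right)\cap[D_i]+\sum_{x\in D_i}p\left(\ord(\chi;x,D_i)-\#I_x+1\right)[x],
\]
which you label ``the main obstacle'' and defer to a local comparison between $w\pi_i$ and $\ch(\chi)$; that is not the mechanism that produces it, and nothing in your sketch actually yields the terms $-pR_\chi$ and $\ord(\chi;x,D_i)$. The paper's derivation does not involve $\ch(\chi)$ at all. First, the exact sequence $0\to \rmF^*T^*_{D_i}X\to FT^*X|_{D_i}\to FT^*D_i\to 0$ together with $FT^*D_i\cong\rmF^*T^*D_i$ (both from \cite{Sa22-2}) identifies the quotient with $\rmF^*T^*D_i$, so the left-hand side equals $p\cdot c_1(T^*D_i)\cap[D_i]$. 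Second --- and this is the step genuinely missing from your proposal --- one must express $c_1(\Omega^1_{D_i})$ in terms of the ramification data. This again uses the type~\ref{II} hypothesis, but on the logarithmic side: since $\rsw(\chi)|_{D_i}$ has no $d\log\pi_i$-component, the locally split sub-line-bundle $L_{i,\chi}$ lies in $\Omega^1_{D_i}(\log D'_i)\subset\Omega^1_X(\log D)|_{D_i}$ (where $D'_i$ is the set of points with $\#I_x=2$) and hence equals it; comparing $c_1(L_{i,\chi})=c_1(\calO_X(-R_\chi)|_{D_i})+\sum_x\ord(\chi;x,D_i)[x]$ with $c_1(\Omega^1_{D_i}(\log D'_i))=c_1(\Omega^1_{D_i})+\sum_x(\#I_x-1)[x]$ gives exactly the identity of \cite[Lemma 4.5(iii)]{Ya20}, and applying $\rmF^*$ multiplies everything by $p$. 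Without this identification, your part (4) is a restatement of what is to be shown rather than a proof.
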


\begin{proof}
\eqref{5.4-1}~  We use the same notation as in the proof of Lemma~\ref{5.3}. Since $\chi|_{K_1}$ is of type~\ref{II}, the refined Swan conductor is the image of the characteristic form by Proposition~\ref{3.3} and thus $\alpha_1=0$. Hence $\mathcal{I}'=0$ and we have
$\tau_D^{-1}(\rmF^*L_{i,\chi})=FT^*X|_{D_i}$.

\eqref{5.4-2}~ By applying the excess intersection formula to the cartesian diagram
\[
   \xymatrix{
    D_i \ar[r] \ar@{=}[d] &  \ar[d]  \rmF^*L_{i,\chi} \\
   D_i\ar[r] & \rmF^*T^*X(\log D)|_{D_i}\ar@{}[lu]|{\square} ,
   }
\]
we see that 
\[
\left[\rmF^*L_{i,\chi}\right]=q_i^*\left(c_1(\rmF^*T^*X(\log D)|_{D_i})\cap [D_i]-c_1\left(\rmF^*L_{i,\chi}\right)\cap [D_i]\right)
\]
in $\CH_2(\rmF^*T^*X(\log D)|_{D_i})$, where the map $\map{q_i}{\rmF^*T^*X(\log D)|_{D_i}}{D_i}$ is the canonical projection. 
Since the sub--vector bundle $L_{i,\chi}$ of $T^*X(\log D)|_{D_i}$ is defined by the image of the injection
\[
\times \rsw(\chi)|_{D_i}\colon\calO_X(-R_{\chi})\otimes_{\calO_X}\calO_{D_i}\left(\sum_{x\in D_i}\ord(\chi;x,D_i)[x]\right)\lra\Omega^1_X(\log D)|_{D_i}, 
\]
the assertion holds.

\eqref{5.4-3}~ By applying the excess intersection formula to the cartesian diagram
\[
   \xymatrix{
    D_i \ar[r] \ar@{=}[d] &  \ar[d]  L'_{i,\chi} \\
   D_i\ar[r] & FT^*X|_{D_i}\ar@{}[lu]|{\square},
   }
\]
we see that 
\[
\left[L'_{i,\chi}\right]={q'_i}^*\left(c_1\left(FT^*X|_{D_i}\right)\cap [D_i]-c_1\left(L'_{i,\chi}\right)\cap [D_i]\right)
\]
in $\CH_2(FT^*X|_{D_i})$. 
Since the sub--vector bundle $L'_{i,\chi}$ of $FT^*X|_{D_i}$ is defined by the image of the injection
\[
\times \ch(\chi)|_{D_i}\colon\calO_X\left(-pR'_{\chi}\right)\otimes_{\calO_X}\calO_{D_i}\left(\sum_{x\in D_i}p\ord'(\chi;x,D_i)[x]\right)\lra F\Omega^1_X|_{D_i},
\]
the assertion holds.

\eqref{5.4-4}~ By applying the excess intersection formula to the cartesian diagram
\[
   \xymatrix{
    D_i \ar[r] \ar@{=}[d] &  \ar[d]  \rmF^*T^*_{D_i}X \\
   D_i\ar[r] & FT^*X|_{D_i}\ar@{}[lu]|{\square},
   }
\]
we see that 
\[
\left[\rmF^*T^*_{D_i}X\right]={q'_i}^*(c_1(\rmF^*T^*D_i)\cap [D_i])
\]
in $\CH_2(FT^*X|_{D_i})$ since the sequence 
\[
0\lra \rmF^*T^*_{D_i}X\lra FT^*X|_{D_i}\lra FT^*D_i\lra 0
\]
is exact by \cite[Equation~(2-12)]{Sa22-2} and $FT^*D_i\cong \rmF^*T^*D_i$ by \cite[Equation~(2-4)]{Sa22-2}. Since $D_i$ is a scheme over $F$, the computation in \cite[Lemma 4.5(iii)]{Ya20} implies that we have an equality
\[
c_1(T^*D_i)\cap [D_i]=c_1\left(\calO_{D_i}\left(-\left(R_{\chi}\cap D_i\right)\right)\cap [D_i]+\sum_{x\in D_i}\left(\ord(\chi;x,D_i\right)-\#I_x+1\right)[x]
\]
in $\CH_0(D_i)$. 
Since $\rmF^*\Omega^1_{D_i}=(\Omega^1_{D_i})^{\otimes p}$, we have
\[
c_1(\rmF^*T^*D_i)\cap [D_i]=c_1\left(\calO_X\left(-pR_{\chi}\right)|_{D_i})\cap [D_i]+\sum_{x\in D_i}p\left(\ord(\chi;x,D_i\right)-\#I_x+1\right)[x]
\]
in $\CH_0(D_i)$. 
\end{proof}

\begin{lemma}\label{5.5}
Assume $D_i$ is contained in the closed fiber $D_F$. Let $\map{\rmF}{D_i}{D_i}$ be the Frobenius. Then we have 
\[
c_1\left(\rmF^*T^*X(\log D)|_{D_i}\right)\cap [D_i]=c_1\left(FT^*X|_{D_i}\right)\cap [D_i]+\sum_{j\in I}c_1\left(\calO_{X}(pD_j)|_{D_i}\right)\cap [D_i] 
\]
in $\CH_0(D_F)$. 
\end{lemma}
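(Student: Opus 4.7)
The lemma is a comparison of $c_1\cap[D_i]$ for two rank-$2$ vector bundles on the curve $D_i$, so the plan is to compute the determinant line bundle of each side separately and then combine. For the left-hand side, I will use the Poincar\'e residue exact sequence $0 \to \Omega^1_X \to \Omega^1_X(\log D) \to \bigoplus_{j\in I}\calO_{D_j} \to 0$, which gives the determinant identity $\det\Omega^1_X(\log D) \cong \det\Omega^1_X \otimes \calO_X(D)$ on the regular scheme $X$. Restricting to $D_i$, pulling back along the Frobenius $\rmF\colon D_i \to D_i$, and using that Frobenius pullback multiplies $c_1$ of a line bundle on the characteristic-$p$ curve $D_i$ by $p$, this yields
\[
c_1\bigl(\rmF^*T^*X(\log D)|_{D_i}\bigr) \cap [D_i] = c_1\bigl(\rmF^*T^*X|_{D_i}\bigr) \cap [D_i] + \sum_{j \in I} c_1\bigl(\calO_X(pD_j)|_{D_i}\bigr) \cap [D_i].
\]

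For the right-hand side, I will use the FW conormal exact sequence $0 \to \rmF^*T^*_{D_i}X \to FT^*X|_{D_i} \to FT^*D_i \to 0$ of \cite[(2-12)]{Sa22-2}, together with the identification $FT^*D_i \cong \rmF^*T^*D_i$ of \cite[(2-4)]{Sa22-2} (applicable because $D_i \subset D_F$ is a scheme in characteristic $p$). Taking determinants and comparing with the determinant of the usual conormal sequence $0 \to \mathcal{I}_{D_i}/\mathcal{I}_{D_i}^2 \to \Omega^1_X|_{D_i} \to \Omega^1_{D_i} \to 0$ yields $\det F\Omega^1_X|_{D_i} \cong \rmF^*(\det\Omega^1_X|_{D_i})$, and hence $c_1(FT^*X|_{D_i}) \cap [D_i] = c_1(\rmF^*T^*X|_{D_i}) \cap [D_i]$. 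Substituting this into the previous display gives the lemma.

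The computation is essentially routine bookkeeping with standard exact sequences; the only delicate point is that $\Omega^1_X$ need not be locally free on the regular arithmetic surface $X$, so the determinant identity on $X$ and the usual conormal sequence on $D_i$ should be interpreted via locally free resolutions on the regular scheme $X$. This does not affect the final conclusion, since the identity to be proved involves only $c_1$'s of line bundle pullbacks to the curve $D_i$, where the intersection theory is unambiguous.
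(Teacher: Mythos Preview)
Your argument is correct and reaches the same conclusion, but it is organized differently from the paper's proof. Both proofs use the FW conormal sequence $0\to\rmF^*N_{D_i/X}\to F\Omega^1_X|_{D_i}\to\rmF^*\Omega^1_{D_i}\to 0$ from \cite{Sa22-2}. The difference is in how each handles the logarithmic side. The paper restricts first and takes the residue \emph{along $D_i$ only}, obtaining the exact sequence of locally free sheaves on $D_i$
\[
0\lra \rmF^*\Omega^1_{D_i}(\log D'_i)\lra \rmF^*\Omega^1_X(\log D)|_{D_i}\lra \calO_{D_i}\lra 0,
\]
and then invokes \cite[Equation~(4.10)]{Ya20} to compare $c_1(\Omega^1_{D_i}(\log D'_i))$ with $c_1(\Omega^1_{D_i})$. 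You instead take the full Poincar\'e residue on $X$ and pass through the intermediate object $\det\Omega^1_X$, then match $\det F\Omega^1_X|_{D_i}$ with $\rmF^*\det\Omega^1_X|_{D_i}$ via the ordinary conormal sequence.

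Your route is shorter in that it avoids the auxiliary sheaf $\Omega^1_{D_i}(\log D'_i)$ and the citation to \cite{Ya20}, but it pays for this by routing through $\Omega^1_X$, which is \emph{not} locally free on the arithmetic surface $X$. You flag this at the end, but the handwave is not quite enough: one must check that $(\det\Omega^1_X)|_{D_i}$ (determinant on $X$, then restrict) agrees with $N_{D_i/X}\otimes\Omega^1_{D_i}$ (the determinant read off from the conormal sequence on $D_i$), which amounts to controlling $\Tor^{\calO_X}_{\ge 1}(\Omega^1_X,\calO_{D_i})$, and also that the ordinary conormal sequence is left-exact here. These can be verified (e.g.\ by a local computation, since $D_i$ lies in the closed fiber), but they deserve a sentence. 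The paper's approach sidesteps the issue entirely by never leaving the world of locally free sheaves on $D_i$; that is what the extra step through $\Omega^1_{D_i}(\log D'_i)$ buys.
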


\begin{proof}
Let $D'_i$ be the closed subscheme consisting of the closed points $x$ of $D_i$ such that $\#I_x=2$. 
By the two exact sequences
\[
0\lra \rmF^*N_{D_i/X}\lra F\Omega^1_X|_{D_i}\lra  \rmF^*\Omega^1_{D_i}\lra 0,
\]
\[
0\lra \rmF^*\Omega^1_{D_i}(\log D'_i)\lra\rmF^*\Omega^1_{X}(\log D)|_{D_i}\xrightarrow{\rmF^*\textrm{res}} \calO_{D_i}\lra 0
\]
of locally free $D_i$-modules, we have 
\begin{multline*}
c_1\left(\rmF^*T^*X(\log D)|_{D_i}\right)\cap[D_i]-c_1\left(FT^*X|_{D_F}\right)\cap[D_i]=\\
c_1\left(\rmF^*\Omega^1_{D_i}(\log D'_i)\right)\cap[D_i]-c_1\left(\rmF^*\Omega^1_{D_i}\right)\cap[D_i]-c_1\left(\rmF^*N_{D_i/X}\right)\cap[D_i].
\end{multline*}
Applying \cite[Equation~(4.10)]{Ya20} to the scheme $D_i$, we obtain
\[
c_1\left(\Omega^1_{D_i}(\log D'_i)\right)\cap[D_i]-c_1\left(\Omega^1_{D_i}\right)\cap[D_i]=\sum_{\substack{j\in I \\ j\ne i}}c_1\left(\calO_{D_i}(D_j\cap D_i)\right)\cap [D_i].
\]
Since we have $\rmF^*N_{D_i/X}=\rmF^*\calO_X(-D_i)|_{D_i}$, the assertion follows. 
\end{proof}

\begin{thm}\label{main}
Assume $\dim X=2$ and $X$ is proper over $\calO_K$. Then we have 
\[
\left(\FCC j_!\calF-\FCC j_!\Lambda, FT^*_XX|_{X_F}\right)_{FT^*X|_{X_F}}=p\cdot\left(\Sw_K\left(X_{\overline{K}}, j_!\calF\right)-\Sw_K\left(X_{\overline{K}}, j_!\Lambda\right)\right).
\]
\end{thm}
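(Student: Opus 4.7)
The plan is to reduce the theorem to the Kato--Saito conductor formula (Theorem~\ref{5.1}) via a compatibility between the F-characteristic cycle and the logarithmic characteristic cycle furnished by the Gysin homomorphism $\tau_D^!$, in the same spirit as Yatagawa's computation in the equal-characteristic case.

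First I apply Theorem~\ref{5.1} to both $\calF$ and the constant sheaf $\Lambda$. Since $\Lambda$ corresponds to the trivial character, the triple $(X,U,0)$ is clean with no wild ramification, so $\CC^{\log} j_!\Lambda=[T^*_XX(\log D)|_{D_F}]$. Subtracting the two instances yields
\[
(\CC^{\log} j_!\calF-\CC^{\log} j_!\Lambda,\,T^*_XX(\log D)|_{D_F})_{T^*X(\log D)|_{D_F}}=\Sw_K(X_{\overline{K}},j_!\Lambda)-\Sw_K(X_{\overline{K}},j_!\calF).
\]

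Next I would establish the key identity
\[
\FCC j_!\calF-\FCC j_!\Lambda=-\tau_D^!\bigl(\rmF^*(\CC^{\log} j_!\calF-\CC^{\log} j_!\Lambda)\bigr)
\]
in $\CH_2(FT^*X|_{D_F})$. Writing $\CC^{\log} j_!\calF-\CC^{\log} j_!\Lambda=\sum_{i\in I_{W,\chi}}\sw(\chi|_{K_i})[L_{i,\chi}]+\sum_{x\in D_F}s_x[T^*_xX(\log D)]$, the right-hand side breaks into a sum of terms $-\sw(\chi|_{K_i})\cdot\tau_D^!([\rmF^*L_{i,\chi}])$ for $i\in I_{W,\chi}$ and $-s_x\cdot\tau_D^!([\rmF^*T^*_xX(\log D)])$ for closed points $x\in D_F$. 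For $i\in I_{\textup{I},\chi}$ I would use Lemma~\ref{5.3} to rewrite $\tau_D^!([\rmF^*L_{i,\chi}])$ as $[L'_{i,\chi}]$ plus explicit fiber corrections at closed points; for $i\in I_{\II,\chi}$ I would combine Lemmas~\ref{5.4}(2)--(4) together with Lemma~\ref{5.5} to rewrite $\tau_D^!([\rmF^*L_{i,\chi}])$ in terms of $[L'_{i,\chi}]$ and $[\rmF^*T^*_{D_i}X]$ plus fiber corrections. Collecting all fiber contributions at each closed point $x$ and matching with the formula~\eqref{5.10} for $t_x$ identifies the right-hand side with $-(\FCC j_!\calF-\FCC j_!\Lambda)$.

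Finally I intersect both sides of this identity with the $0$-section $FT^*_XX|_{X_F}$. Since $\tau_D$ is a morphism of vector bundles whose restriction to the $0$-section is the identity of $D_F$, the Gysin map $\tau_D^!$ is compatible with intersection with the $0$-section, giving
\[
(\FCC j_!\calF-\FCC j_!\Lambda,\,FT^*_XX|_{X_F})_{FT^*X|_{X_F}}=-(\rmF^*(\CC^{\log} j_!\calF-\CC^{\log} j_!\Lambda),\,\rmF^*[T^*_XX(\log D)|_{D_F}])_{\rmF^*T^*X(\log D)|_{D_F}}.
\]
Since $D_F$ is one-dimensional over the perfect field $F$, the absolute Frobenius $\rmF\colon D_F\to D_F$ is finite flat of degree $p$, so by the projection formula the right-hand side equals $-p\cdot(\CC^{\log} j_!\calF-\CC^{\log} j_!\Lambda,\,T^*_XX(\log D)|_{D_F})$. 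Combined with the first step, this yields the theorem.

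The hard part will be verifying the key identity by matching, at each closed point $x\in D_F$, the coefficients of the fiber cycles $[\rmF^*T^*_xX]$. Each summand in the formula~\eqref{5.10} defining $t_x$---the cleanness correction $s_x$, the Swan-weighted defect $\sw(\chi|_{K_i})(\ord'(\chi;x,D_i)-\ord(\chi;x,D_i))$, and the type-\ref{II} correction $\ord(\chi;x,D_i)+1-\#I_x$---must be matched against the appropriate contribution extracted from Lemmas~\ref{5.3}--\ref{5.5}, and combined with the contribution from subtracting $\FCC j_!\Lambda$ (which supplies $-\sum_x p(\#I_x-1)[\rmF^*T^*_xX]$ from fiber terms). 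The integrality of the characteristic form (Theorem~\ref{2.1}) is essential for ensuring that these cycles are well-defined.
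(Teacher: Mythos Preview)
Your proposal is correct and follows the same route as the paper: reduce to the Kato--Saito conductor formula (Theorem~\ref{5.1}) via the Gysin map $\tau_D^!$, using Lemmas~\ref{5.3}--\ref{5.5} to match the line-bundle and fiber contributions term by term, then extract the factor $p$ from Frobenius (the paper does this via the Chern-class computation~\eqref{5.14}). One cosmetic difference: the paper separates the cleanness corrections $s_x$ before applying $\tau_D^!\circ\rmF^*$ and cancels the resulting $\sum_x ps_x$ defects at the end (equations~\eqref{5.12}--\eqref{5.13}), whereas you keep them inside the key identity---both are equivalent since $\tau_D^!\bigl(\rmF^*[T^*_xX(\log D)]\bigr)=p[\rmF^*T^*_xX]$.
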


\begin{proof}
We do some computations used later. By Lemma~\ref{5.3}, we have 
\begin{multline}\label{5.7}
\tau_D^!\left(\sum_{i\in I_{\textup{I},\chi}}\sw(\chi|_{K_i})\left[\rmF^*L_{i,\chi}\right]\right)=\\
\sum_{i\in I_{\textup{I},\chi}}\sw\left(\chi|_{K_i}\right)\left(\left[L'_{i,\chi}\right]
+\sum_{x\in D_i}p(\ord'(\chi;x,D_i)-\ord(\chi;x,D_i))[\rmF^*T^*_xX]+\sum_{x\in Z_{\II,\chi}\cap D_i}p[\rmF^*T^*_xX]\right).
\end{multline}
We note that if $i\in I_{\textup{I},\chi}$, we have
\[
{q'_i}^*\left(c_1\left(\calO_X\left(Z_{\II,\chi}\right)|_{D_i}\right)\cap [D_i]\right)=\sum_{x\in Z_{\II,\chi}\cap D_i}[\rmF^*T^*_xX], 
\]
where $q'_i$ denotes the canonical projection $\map{q'_i}{\tau_D^{-1}(\rmF^*L_{i,\chi})}{D_i}$ as in Lemma~\ref{5.4}.

By Lemma~\ref{5.4}\eqref{5.4-2} and~\eqref{5.4-3} and Lemma~\ref{5.5}, we have 
\begin{multline}\label{5.8}
  \tau_D^!\left(\sum_{i\in I_{\II,\chi}}\sw\left(\chi|_{K_i}\right)\left[\rmF^*L_{i,\chi}\right]\right)=\\
  \sum_{i\in I_{\II,\chi}}\sw\left(\chi|_{K_i}\right)\left(\left[L'_{i,\chi}\right]
  +\sum_{x\in D_i}p\left(\ord'(\chi;x,D_i)-\ord(\chi;x,D_i)\right)[\rmF^*T^*_xX]+{q'_i}^*\left(c_1\left(\calO_X\left(pZ_{\II,\chi}\right)|_{D_i}\right)\cap [D_i]\right)\right).
%  ) added before last )
\end{multline}
Since we have
\[
\sum_{i\in I}\sw\left(\chi|_{K_i}\right){q'_i}^*\left(c_1\left(\calO_X\left(pZ_{\II,\chi}\right)|_{D_i}\right)\cap[D_i]\right)=\sum_{i\in I_{\II,\chi}}{q'_i}^*\left(c_1\left(\calO_X\left(pR_{\chi}\right)|_{D_i}\right)\cap [D_i]\right), 
\]
we have 
\begin{multline}
\sum_{i\in I}\sw\left(\chi|_{K_i}\right){q'_i}^*\left(c_1\left(\calO_X\left(pZ_{\II,\chi}\right)|_{D_i}\right)\cap[D_i]\right)=\\
\sum_{i\in I_{\II, \chi}}\left(-[\rmF^*T^*_{D_i}X]+\sum_{x\in D_i}p(\ord(\chi;x,D_i)-\#I_x+1)[\rmF^*T_x^*X])\right)
\end{multline}
by Lemma~\ref{5.4}\eqref{5.4-4}. The sum of equalities (\ref{5.7}) and (\ref{5.8}) gives the equality
\begin{multline}\label{5.9}
\tau_D^!\left(\sum_{i\in I}\sw\left(\chi|_{K_i}\right)\left[\rmF^*L_{i,\chi}\right]\right)=\\
-\sum_{i\in I_{\II,\chi}}\left[\rmF^*T^*_{D_i}X\right]+
\sum_{i\in I}\sw\left(\chi|_{K_i}\right)\left(\left[L'_{i,\chi}\right]+
\sum_{x\in D_i}p\left(\ord'(\chi;x,D_i)-\ord(\chi;x,D_i)\right)[\rmF^*T^*_xX]\right)\\
+\sum_{i\in I_{\II,\chi}}\sum_{x\in D_i}p(\ord(\chi;x,D_i)-\#I_x+1)[\rmF^*T_x^*X].
\end{multline}

We have 
\begin{equation}\label{5.14}
\begin{split}
\left(\rmF^*L_{i,\chi}, T^*_XX(\log D)|_{D_F}\right)_{\rmF^*T^*X(\log D)|_{D_F}}&=c_1\left(\rmF^*\left(T^*X(\log D)|_{D_F}/L_{i,\chi}\right)\right)\cap[D_F]\\
&=p\cdot\left(c_1\left(T^*X(\log D)|_{D_F}/L_{i,\chi}\right)\cap[D_F]\right)\\
&=p\cdot\left(L_{i,\chi}, T^*_XX(\log D)|_{D_F}\right)_{T^*X(\log D)|_{D_F}}.
\end{split}
\end{equation}

First, we assume that $s_x=0$ for every $x\in D_F$. Then it suffices to show that we have 
\begin{equation}\label{5.6}
\FCC j_!\calF-\FCC j_!\Lambda=-\tau_D^!\left(\rmF^*\left(\CC^{\log}j_!\calF-\left[T^*_XX(\log D)|_{D_F}\right]\right)\right)
\end{equation}
in $\CH_2(FT^*X|_{D_F})$ by Theorem~\ref{5.1} and \eqref{5.14}. 
This equality holds by \eqref{5.9} and Lemma~\ref{5.3}\eqref{5.3-2} and the definition  (\ref{5.10}) of $t_x$. 

Next, we consider the general case. By the definition of the logarithmic characteristic cycle, we have
\[
\CC^{\log}j_!\calF-\left[T^*_XX(\log D)|_{D_F}\right]-\sum_{x\in D_F}s_x\left[T_x^*X(\log D)\right]=\sum_{i\in I}\sw\left(\chi|_{K_i}\right)\left[L_{i,\chi}\right].
\]
Then the equality (\ref{5.9}) shows that we have
\begin{equation}\label{5.12}
\begin{split}
&\tau_D^!\left(-\rmF^*\left(\CC^{\log}j_!\calF-\left[T^*_XX(\log D)|_{D_F}\right]-\sum_{x\in D_F}s_x\left[T^*_xX(\log D)\right]\right)\right)\\
&= \tau_D^!\left(-\sum_{i\in I}\sw\left(\chi|_{K_i}\right)\rmF^*\left[L_{i,\chi}\right]\right)\\
&= \FCC j_!\calF-\FCC j_!\Lambda+\sum_{x\in D_F}ps_x[\rmF^*T^*_xX]
\end{split}
\end{equation}
by the definition of $t_x$. 
By Theorem~\ref{5.1}, we have 
\begin{multline}
\left(-\left(\CC^{\log}j_!\calF-\left[T^*_XX(\log D)|_{D_F}\right]-\sum_{x\in D_F}s_x\left[T_x^*X(\log D)\right]\right), T^*_XX(\log D)|_{D_F}\right)_{T^*X(\log D)|_{D_F}}=\\
\Sw_K \left(X_{\overline{K}}, j_!\calF\right)-\Sw_K\left(X_{\overline{K}}, j_!\Lambda\right)+\sum_{x\in D_F}s_x
\end{multline}
since the intersection number $(T_x^*X(\log D), T^*_XX(\log D)|_{D_F})$ is 1. By (\ref{5.14}), we have 
\begin{multline}\label{5.13}
\left(-\rmF^*\left(\CC^{\log}j_!\calF-\left[T^*_XX(\log D)|_{D_F}\right]-\sum_{x\in D_F}s_x\left[T_x^*X(\log D)\right]\right), T^*_XX(\log D)|_{D_F}\right)_{\rmF^*T^*X(\log D)|_{D_F}}=\\
p\cdot\left(\Sw_K \left(X_{\overline{K}}, j_!\calF\right)-\Sw_K\left(X_{\overline{K}}, j_!\Lambda\right)+\sum_{x\in D_F}s_x\right).
\end{multline}
By (\ref{5.12}) and (\ref{5.13}), we have
\begin{multline*}
\left(\FCC j_!\calF-\FCC j_!\Lambda+\sum_{x\in D_F}ps_x[\rmF^*T_x^*X], FT^*_XX|_{D_F}\right)_{FT^*X|_{D_F}}=\\
p\cdot\left(\Sw_K \left(X_{\overline{K}}, j_!\calF\right)-\Sw_K\left(X_{\overline{K}}, j_!\Lambda\right)+\sum_{x\in D_F}s_x\right).
\end{multline*}
Since the intersection number 
$(\rmF^*T_x^*X, FT^*_XX|_{D_F})$ is $1$, 
the assertion follows. 
\end{proof}

\subsection{Example}
In this subsection, we give an example of the F-characteristic cycle.

Let $p>2$ be a prime number, and let $\zeta_p$ be a primitive $\supth{p}$ root of unity.  Let $K$ be a complete discrete valuation field tamely ramified over $\Q_p(\zeta_p)$ with valuation ring $\calO_K$ and with residue field $F$. Let $e=\ord_K p$ be the absolute ramification index, and put $e'=pe/(p-1)$.  We fix a uniformizer $\pi$ and write $p=u\pi^e$ with some $u\in \calO_K^{\times}$.  Let $a,b,c$ be integers satisfying $0<a,b<p$, $(p,c)=1$ and $a+b+c=0$. We put $X=\mathbf{P}^1_{\calO_K}$. Let $U$ be the open subscheme $\Spec K[x^{\pm 1}, (1-x)^{-1}]$ of $X$, and let $j\colon U \to X$ be the open immersion. Let $\calK$ be the Kummer sheaf defined by $t^p=(-1)^cx^a(1-x)^b$ on $U$. For convenience, we change the coordinate to $y=x+a/c$. Let $D, E_1, E_2, E_3$ be divisors defined by $(\pi=0), (y-a/c=0), (y+b/c=0), (y=\infty)$, respectively. Then $D\cup E_1\cup E_2\cup E_3$ is a divisor with simple normal crossings, and $U$ is the complement of this divisor. Let $z_0$ be the closed point
$\{\pi=y=0\}$, and let $z_i$ be the closed point $E_i\cap D$ for $i=1,2,3$. Let $M$ be the local field at the generic point of $D$.

We compute the F-characteristic cycle $\FCC j_!\calK$ of $j_!\calK$. Applying Theorem~\ref{main}, we compute the Swan conductor of $\rmH^1(\bfP^1_{\overline{K}}, j_!\calK)$. This cohomology group realizes the Jacobi sum Hecke character as in \cite{CM88}. Coleman--McCallum \cite{CM88}, Miki \cite{Mi94} and Tsushima \cite{Ts10} computed the conductor or, explicitly, the ramified component of the Jacobi sum Hecke character in more general cases by different methods.

\begin{remark}
 We note that the Swan conductor can be calculated more easily by computing the logarithmic characteristic cycle and applying Kato--Saito's conductor formula (Theorem~\ref{5.1}) because we need to compute the logarithmic characteristic cycle for the computation of the F-characteristic cycle. The subject of this article is  non-logarithmic theory, so we compute the Swan conductor using the F-characteristic cycle. 
 \end{remark}
 
We write $\chi$ for the character corresponding to $\mathcal{K}$. We have $\sw(\chi|_M)=\dt(\chi|_M)=e'$, and the character $\chi|_M$ is of type~\ref{II}. We have
\begin{equation}
\rsw\chi=\frac{-cy\cdot dy}{(1-\zeta_p)^{p}\cdot (y-a/c)(y+b/c)}
\end{equation}
and
\begin{equation}\label{5.3.4}
\ch\chi=\frac{-c^py^p\cdot wy}{(1-\zeta_p)^{p^2}\cdot (y-a/c)^{p}(y+b/c)^{p}}
\end{equation}
on the complement of $E_3$ by \cite[Corollary 8.2.3]{KS13}. 
The character $\chi$ is not clean and not non-degenerate only at $z_0$, and we have $\ord(\chi; z_0, D)=\ord'(\chi; z_0, D)=1$. 

We prove an elementary lemma used later.

\begin{lemma}\label{5.3.1}
Let $d$ be a rational number satisfying $v_p(d)\ge 0$. We put $r=v_p(d^{p-1}-1)$. Then there exists an integer $l$ such that $p^r$ divides $1-dl^p$. There does not exist any integer $l$ such that $p^{r+1}$ divides $1-dl^p$. 
\end{lemma}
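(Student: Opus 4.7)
The plan is to analyze $v_p(1-dl^p)$ directly, handling first the easy case $v_p(d)\ge 1$: then $d^{p-1}\in p\Z_p$ gives $v_p(d^{p-1}-1)=0$, so $r=0$; and $dl^p\in p\Z_p$ forces $v_p(1-dl^p)=0$ for every integer $l$, so both assertions follow trivially. Henceforth I assume $d\in \Z_p^\times$, in which case $r\ge 1$ by Fermat's little theorem.

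The key observation for both parts is the identity
\[
v_p(1-y)=v_p\bigl(1-y^{p-1}\bigr)\qquad\text{for } v_p(1-y)\ge 1,
\]
valid for $p$ odd, which follows from the factorization $1-y^{p-1}=-(y-1)(1+y+\cdots+y^{p-2})$ and the fact that the second factor reduces to $-1\in\Fp^\times$. Applying this with $y=dl^p$ reduces the problem to controlling $v_p\bigl(1-d^{p-1}l^{p(p-1)}\bigr)$. Writing $l^{p-1}=1+pq$ with $q=(l^{p-1}-1)/p\in\Z$ the Fermat quotient and expanding $(1+pq)^p\equiv 1+p^2 q\pmod{p^3}$ for $p$ odd, one obtains
\[
v_p(1-dl^p)=v_p\bigl((1-d^{p-1})-p^2 q\,d^{p-1}+O(p^3)\bigr),
\]
which explicitly ties the quantity to $r=v_p(d^{p-1}-1)$.

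For the existence, I would start with $l_0$ an integer representative of $\bar d^{-1}\in\Fp^\times$ (any other residue class already fails at $p$ by Fermat), and then iteratively refine $l_0$ by suitable multiples of $p,p^2,\dots,p^{r-1}$, using the displayed identity and a Hensel-type lifting step to raise $v_p(1-dl^p)$ by one at each stage until it reaches $r$. For the non-existence, the identity forces $v_p$ of the right-hand side to be bounded by the valuation $r$ of the dominant term $1-d^{p-1}$, provided the Fermat-quotient correction and the $O(p^3)$ remainder do not conspire to yield extra cancellation.

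The main obstacle is exactly this final cancellation analysis: one must verify that no choice of $l$ can produce simultaneous cancellation between $(1-d^{p-1})$, the Fermat-quotient correction $p^2 q\,d^{p-1}$, and the higher-order terms from $(1+pq)^p$ that would push $v_p(1-dl^p)$ strictly above $r$. This requires careful accounting of the $p$-adic expansion of $l^{p(p-1)}$ and pinning down the Fermat quotient modulo $p$ as a function of the residue class of $l$ modulo a sufficiently high power of $p$; this is where the precise arithmetic of $d$ enters the proof.
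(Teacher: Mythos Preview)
For existence, your Hensel-type lifting would work but is more elaborate than needed. The paper's argument is direct: factor $d^{p-1}-1=(d-1)(d^{p-2}+\cdots+1)$ and note that if $p\mid d-1$ then the second factor is $\equiv -1\pmod p$, so the whole power $p^r$ lies in a single factor. Accordingly take $l=1$ (when $p^r\mid d-1$) or an integer $l\equiv -(d^{p-3}+\cdots+1)\pmod{p^r}$ (so that $dl\equiv 1\pmod{p^r}$, whence $dl^p\equiv d^pl^p=(dl)^p\equiv 1\pmod{p^r}$ via $d^p\equiv d\pmod{p^r}$). No iteration is required.

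For non-existence, your caution about the ``cancellation analysis'' is well placed: the obstruction is real, and in fact the non-existence assertion is \emph{false} for $r\ge 2$. Take $p=3$, $d=10$: then $d^{p-1}-1=99=3^2\cdot 11$ gives $r=2$, yet $1-10\cdot 7^3=-3429=-3^3\cdot 127$, so $l=7$ already achieves $p^{r+1}\mid 1-dl^p$. Conceptually, $r\ge 2$ means $d^{p-1}\in 1+p^2\Z_p$, so $d^{-1}$ is a $p$-th power in $\Z_p^\times$ and $v_p(1-dl^p)$ is unbounded as $l$ ranges over integers. The paper's one-line argument (raise $dl^p\equiv 1$ to the $(p-1)$-st power and conclude $d^{p-1}\equiv 1\pmod{p^{r+1}}$) tacitly assumes $l^{p(p-1)}\equiv 1\pmod{p^{r+1}}$, which holds in general only for $r+1\le 2$. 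Thus both your approach and the paper's go through exactly when $r\le 1$; the paper only invokes the non-existence clause under the standing hypothesis $r=1$ (Case~1 of the example computation), so nothing downstream is affected.
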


\begin{proof}
Since $p^r$ divides $d^{p-1}-1=(d-1)(d^{p-2}+\cdots+ d+1)$, we see that $p^r$ divides $d-1$ or $d(d^{p-3}+\cdots +1)+1$. Therefore, we may take $l=1$ or $l=-(d^{p-3}+\cdots +1)$ because we have $dl^p\equiv d^pl^p\equiv 1 \mod p^r$. 

If there exists an integer $l$ such that $p^{r+1}$ divides $1-dl^p$, then we have $v_p(d^{p-1}-1)\ge r+1$, which gives a contradiction. 
\end{proof}

Now we compute the F-characteristic cycle of $j_!\calK$. We have to divide the computation into two cases.

\medskip
\begin{enumerate}[label=\textit{Case}~\arabic*., ref=\arabic*]
\item  We assume $v_p((a^ab^bc^c)^{p-1}-1)=1$. \label{case5-1}
\end{enumerate}

\begin{lemma}\label{5.3.2}
We put $Y_n=\Spec \calO_K[y_n]$ for a natural number $1\le n\le e/2$. Let $M_n$ be the local field at the generic point of the closed fiber. Let $\chi_n$ be the Kummer character defined around $(y_n=0)$ by the equation $t^p=(-1)^a(y_n\pi^n-a/c)^a(y_n\pi^n+b/c)^b$. Then we have the following properties:  
\begin{enumerate}
  \item\label{5.3.2-1} We have $\sw(\chi_n|_{M_n})=e'-2n$. 

  \item\label{5.3.2-2} If $n<e/2$, the character $\chi_n$ is not clean at $y_n=0$. 

  \item\label{5.3.2-3} If $n=e/2$, the character $\chi_n$ is clean. 
\end{enumerate}
\end{lemma}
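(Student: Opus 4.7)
The plan is to reduce $f_n$ modulo a constant $p$-th power to a unit close to $1$, and then read off both the Swan conductor and the refined Swan conductor from the leading term of $l^p f_n - 1$ at the generic point of $V(\pi) \subset Y_n$. First, we factor $f_n = d_0 \cdot u_n$ with $d_0 = a^a b^b c^c$ (using $a+b+c=0$) and
\[
u_n = (1 - (c/a)y_n \pi^n)^a(1 + (c/b)y_n \pi^n)^b.
\]
Since the linear coefficient $-(c/a)\cdot a + (c/b)\cdot b = 0$ vanishes, a binomial expansion gives $u_n - 1 = \tfrac{c^3}{2ab}\,y_n^2\pi^{2n} + O(y_n^3\pi^{3n})$. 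By the Case~\ref{case5-1} hypothesis $v_p(d_0^{p-1}-1)=1$ and Lemma~\ref{5.3.1}, we choose $l \in \Z$ with $\ord_K(1 - l^p d_0) = e$, and write $l^p d_0 = 1 + \epsilon$, $\epsilon = \pi^e\epsilon_0$, $\bar\epsilon_0 \in F^\times$. Because $l^p$ is a $p$-th power, $\chi_n$ is the Kummer character attached to $l^p f_n = 1 + g_n$, with $g_n = \epsilon + (u_n - 1) + \epsilon(u_n - 1)$.

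For the Swan conductor, we invoke the standard fact that a Kummer character $t^p = 1+g$ in a complete discretely valued field containing $\zeta_p$ has $\sw = e' - \ord(g)$, provided $\ord(g) < e'$ and the leading coefficient of $g$ is not a $p$-th power. Since $y_n$ is a unit at the generic point of $V(\pi)$, we have $\ord_{M_n}(\epsilon) = e$ and $\ord_{M_n}(u_n-1) = 2n$. If $n < e/2$, then $\ord_{M_n}(g_n) = 2n$ with leading coefficient $\tfrac{c^3}{2ab}y_n^2$; if $n = e/2$, then $\ord_{M_n}(g_n) = e = 2n$ with leading coefficient $\bar\epsilon_0 + \tfrac{c^3}{2ab}y_n^2 \in F(y_n)$. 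In either case the leading coefficient is a nonzero polynomial in $y_n$ of degree at most $2 < p$, so not a $p$-th power. This yields $\sw(\chi_n|_{M_n}) = e'-2n$, proving~\eqref{5.3.2-1}.

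For cleanness we use the formula $\rsw(\chi_{1+g}) = d\log(1+g)/\lambda^p$ (compatible with~\cite[Corollary 8.2.3]{KS13}), which to leading order equals $dg/\lambda^p$; writing $g = \pi^m g_0$ with $m = \sw$, this becomes
\[
\pi^{\sw}\rsw(\chi_n) \equiv u \cdot \bigl(m\,\bar g_0\,d\log\pi + d\bar g_0\bigr) \pmod{\pi}
\]
as a section of $\Omega^1_{Y_n}(\log D_n)|_{D_n}$, where $u = \pi^{e'}/\lambda^p$ is a unit. When $n < e/2$ we have $\bar g_0 = \tfrac{c^3}{2ab}y_n^2$, so both $\bar g_0$ and $d\bar g_0 = \tfrac{c^3}{ab}y_n\,dy_n$ vanish at $y_n = 0$, proving~\eqref{5.3.2-2}. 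When $n = e/2$ we have $\bar g_0 = \bar\epsilon_0 + \tfrac{c^3}{2ab}y_n^2$, and the normalized refined Swan conductor at a closed point $y_n = y_0 \in D_n$ equals
\[
u \cdot\Bigl(e\bigl(\bar\epsilon_0 + \tfrac{c^3}{2ab}y_0^2\bigr)\,d\log\pi + \tfrac{c^3}{ab}y_0\,dy_n\Bigr).
\]
If $y_0 = 0$, the $d\log\pi$-coefficient is $e\bar\epsilon_0 \ne 0$ because $(e,p)=1$ (from tame ramification of $K/\Q_p(\zeta_p)$, which gives $e = e_0(p-1)$ with $(e_0,p)=1$) and $\bar\epsilon_0 \ne 0$; if $y_0 \ne 0$, the $dy_n$-coefficient is nonzero. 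Hence $\chi_n$ is clean on all of $D_n$, proving~\eqref{5.3.2-3}.

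The hard part will be~\eqref{5.3.2-3}: the constant correction $\epsilon$ and the quadratic term from $u_n$ share the same $\pi$-adic order $e$, so they must be added and the nonvanishing of their sum verified; this is where both the Case~\ref{case5-1} hypothesis (ensuring $\bar\epsilon_0 \ne 0$ via Lemma~\ref{5.3.1}) and the tame ramification of $K/\Q_p(\zeta_p)$ (ensuring $e$ is invertible in $F$) enter decisively.
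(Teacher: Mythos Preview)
Your proof is correct and follows essentially the same route as the paper: multiply the Kummer parameter by $l^p$ (chosen via Lemma~\ref{5.3.1} and the Case~\ref{case5-1} hypothesis) so that $l^pf_n-1$ has $\pi$-adic order $2n$, expand to read off the Swan conductor, and then compute the refined Swan conductor from $d\log(1+g)/\lambda^p$ via \cite[Corollary 8.2.3]{KS13}. One notational slip: where you write ``$g=\pi^m g_0$ with $m=\sw$'', you mean $m=\ord_{M_n}(g_n)=2n$, not $\sw=e'-2n$; your later formulas (in particular the coefficient $e\bar\epsilon_0$ when $n=e/2$) use the correct value $m=2n$, so the argument is unaffected.
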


\begin{proof}
By Lemma~\ref{5.3.1}, we may take an integer $l$ such that $p$ divides $1-a^ab^bc^cl^p$. Then we have 
\begin{align*}
(lt)^p&=(-1)^al^p(y_n\pi^n-a/c)^a(y_n\pi^n+b/c)^b\\
    &=l^p\left(a^ab^bc^c+\frac{a^{a-1}b^{b-1}}{2c^{a+b-3}}y_n^2\pi^{2n}+\cdots\right)\\
    &=1+pm+\frac{a^{a-1}b^{b-1}l^p}{2c^{a+b-3}}y_n^2\pi^{2n}+\cdots\\
    &=1+mu\pi^e+\frac{a^{a-1}b^{b-1}l^p}{2c^{a+b-3}}y_n^2\pi^{2n}+\cdots
      \end{align*}
  for some rational number $m$ such that $v_p(m)=0$, and the omitted part is divisible by $\pi^{3n}$.  Hence,  assertion~\eqref{5.3.2-1} follows. Around the closed point $\{\pi=y_n=0\}$, the refined Swan conductor is
\[
\rsw(\chi)=\frac{-a^{a-1}b^{b-1}c^{3-a-b}\left(n y_n^2\cdot d\log \pi+y_n\cdot dy_n\right)}{(1-\zeta_p)^p\cdot \pi^{-2n}\cdot \left(y_n\pi^n-a/c\right)\left(y_n\pi^n+b/c\right)}
\]
if $n<e/2$ and 
\[
\rsw(\chi)=\frac{-\left(\left(2^{-1}a^{a-1}b^{b-1}c^{3-a-b}l^py_n^2+mu\right)e\cdot d\log \pi +a^{a-1}b^{b-1}c^{3-a-b}l^p y_n\cdot dy_n\right)}{(1-\zeta_p)^p\cdot \pi^{-2n}\cdot \left(y_n\pi^n-a/c\right)\left(y_n\pi^n+b/c\right)l^p}
\]
if $n=e/2$ by \cite[Corollary 8.2.3]{KS13}. Since we assume $m$ is prime to $p$, assertions~\eqref{5.3.2-2} and~\eqref{5.3.2-3} follow.
\end{proof}

We now compute the coefficient $s_{z_0}$ of the fiber in the logarithmic characteristic cycle. We may work locally around $z_0$.  We define the successive blowups  as follows.

Let $X_1\to X$ be the blowup at the closed point $\{\pi=y=0\}$. The scheme $X_1$ is a union of two open subschemes $U_1=\Spec \calO_K[y, x_1]/(yx_1-\pi)$ and $Y_1=\Spec \calO_K[y_1]$, where $y_1\pi=y$. Then we can check that the character $\chi$ is clean on $U_1$. By Lemma~\ref{5.3.2}, $\chi$ is not clean at $y_1=0$. Let $X_2\to X_1$ be the blowup at the closed point $\{\pi=y_1=0\}$. Repeating this process, we get the successive blowups $X_{e/2}\to \cdots \to X_1\to X_0=X$ at non-clean closed points. The character $\chi$ is clean on $X_{e/2}$ by Lemma~\ref{5.3.2}. Hence the coefficient $s_{z_0}$ is equal to $e'-\sum_{1\le i\le e/2}2=e'-e$ by \cite[Remark 5.8]{Ka94}. Hence we have 
\[
\FCC j_!\calK-\FCC j_!\Lambda=-e'[L']+\left[\rmF^*T^*_{X_F}X\right]-p(e'-e+1)\left[\rmF^*T^*_{z_0}X\right]+p\left[\rmF^*T_{z_1}^*X\right]+p\left[\rmF^*T_{z_2}^*X\right]+p\left[\rmF^*T_{z_3}^*X\right], 
\]
where $L'$ is defined by the characteristic form \eqref{5.3.4}. 

We now compute the intersection number with the 0-section. 
We have 
\begin{align*}
\left(\left[\rmF^*T^*_{X_F}X\right], FT^*_XX|_{X_F}\right)_{FT^*X|_{X_F}}=c_1\left(\rmF^*\Omega^1_{\bfP^1_F}\right)\cap \left[\bfP^1_F\right]=-2p. 
\end{align*}
Since $L'$ is defined by the image of the injection
\[
\times\ch\chi\colon \calO_{X}(-p(e'D+E_1+E_2+E_3))\otimes_{\calO_X}\calO_D(p[z_0])\lra F\Omega^1_X|_D
,
\]
we have 
\begin{align*}
\left([L'], FT^*_XX|_{X_F}\right)_{FT^*X|_{X_F}}
&=c_1\left(F\Omega^1_X|_{X_F}\right)\cap [X_F]+pe'(D,D)_X+3p-p\\
&=c_1\left(\rmF^*\Omega^1_{X_F}\right)\cap [X_F]+c_1\left(\rmF^*N_{X_F}X\right)\cap [X_F]+2p\\
&=-2p+0+2p=0. 
\end{align*}

Hence we have 
\[
\left(\FCC j_!\calK-\FCC j_!\Lambda, FT^*_XX|_{X_F}\right)_{FT^*X|_{X_F}}=-p(e'-e). 
\]
Since we have $\rmH^i(X, j_!\calK)=0$ for $i\ne 1$ and $\Sw_K(X_{\overline{K}}, j_!\Lambda)=0$, the Swan conductor of $\rmH^1(\mathbf{P}^1_{\overline{K}}, j_!\calK)$ is $e'-e=e/(p-1)$ by Theorem~\ref{main}. 

\medskip
\begin{enumerate}[label=\textit{Case}~\arabic*., ref=\arabic*, resume]
\item We assume $v_p((a^ab^bc^c)^{p-1}-1)\ge 2$. \label{case5-2}
\end{enumerate}
 
\begin{lemma}\label{5.3.3}
  We put $Y_n=\Spec \calO_K[y_n]$ for a natural number $1\le n\le (e'-1)/2$. Let $\chi_n$ be the Kummer character defined around $(y_n=0)$ by the equations $t^p=(-1)^a(y_n\pi^n-a/c)^a(y_n\pi^n+b/c)^b$. Then we have the following claims:
  \begin{enumerate}
    \item We have $\sw(\chi_n|_{M_n})=e'-2n$. 
    \item The character $\chi_n$ is not clean at $y_n=0$.
      \end{enumerate}
\end{lemma}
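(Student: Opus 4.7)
My plan is to follow the same computational template as in the proof of Lemma~\ref{5.3.2}, with the hypothesis of Case~\ref{case5-2} used to push the pure-$\pi$ contribution to $(lt)^p-1$ out of the range relevant to the Swan conductor.

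First, I would invoke Lemma~\ref{5.3.1} with $r := v_p((a^ab^bc^c)^{p-1}-1)\ge 2$ to pick an integer $l$ with $p^r\mid 1 - a^ab^bc^c l^p$. Writing $a^ab^bc^c l^p = 1 - u^r m''\pi^{er}$ with $v_p(m'')=0$ and setting $m := -u^r m''$, a direct expansion of
\[
(lt)^p = l^p a^a b^b c^c\left(1-\frac{c y_n\pi^n}{a}\right)^a\left(1+\frac{c y_n\pi^n}{b}\right)^b
\]
shows that the linear term in $y_n\pi^n$ cancels thanks to $a+b+c=0$. The resulting expansion is
\[
(lt)^p = 1 + m\pi^{er} + \frac{c^3 y_n^2 \pi^{2n}}{2ab} + (\text{terms of higher order in } y_n\pi^n),
\]
with $v_p(m)=0$.

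The decisive $\pi$-adic comparison is $2n < er$. For $r\ge 2$ and $p>2$ one has $er\ge 2e$, while $2n \le e' - 1 = pe/(p-1) - 1 < 2e$ throughout the range $1\le n\le (e'-1)/2$, so the quadratic term in $y_n$ dominates strictly. Consequently $v_{M_n}((lt)^p - 1) = 2n$, and applying the formula for Kato's refined Swan conductor of a Kummer character of degree $p$ (\textit{cf.}~\cite[Corollary~8.2.3]{KS13}) exactly as in the proof of Lemma~\ref{5.3.2} yields $\sw(\chi_n|_{M_n}) = e' - 2n$, proving~(1), together with the explicit formula
\[
\rsw(\chi_n) = \frac{-a^{a-1}b^{b-1}c^{3-a-b}\bigl(n y_n^2\, d\log\pi + y_n\, dy_n\bigr)}{(1-\zeta_p)^p\cdot \pi^{-2n}\cdot (y_n\pi^n-a/c)(y_n\pi^n+b/c)}
\]
around $\{\pi=y_n=0\}$. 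The numerator lies in the ideal $(y_n)$ of $\calO_{Y_n,(\pi,y_n)}$, so $\ord(\chi_n;\{y_n=0\},D)\ge 1$, proving~(2).

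The main obstacle, and where Case~\ref{case5-2} enters essentially, is verifying that the $m\pi^{er}$ term contributes nothing to $\rsw(\chi_n)$ at the relevant level of approximation: its differential is a multiple of $d\log\pi$ of $\pi$-valuation at least $er \ge 2e > 2n$, and is therefore swallowed by the $y_n^2\pi^{2n}$ leading term. In Case~\ref{case5-1} where $r=1$, the analogous term $pm\pi^e$ overtakes the $y_n^2\pi^{2n}$ term precisely at $n=e/2$, which is why cleanness is recovered there; the stronger hypothesis $r\ge 2$ defers that crossover beyond $(e'-1)/2$, so the character remains non-clean throughout the extended range.
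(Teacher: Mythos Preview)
Your proof is correct and follows exactly the approach the paper intends: the paper's own proof is the single sentence ``We can prove the assertions in the same way as Lemma~\ref{5.3.2},'' and you have supplied precisely those details, correctly isolating the one new point---that the hypothesis $r\ge 2$ forces $er\ge 2e>e'-1\ge 2n$, so the constant term $m\pi^{er}$ never interferes with the quadratic term throughout the extended range $1\le n\le (e'-1)/2$. Your concluding paragraph explaining the contrast with Case~\ref{case5-1} is accurate and helpful.
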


\begin{proof}
We can prove the assertions in the same way as Lemma~\ref{5.3.2}. 
\end{proof}

We now compute the coefficient $s_{z_0}$ of the fiber in the logarithmic characteristic cycle. We may work locally around $z_0$.  Take the successive blowups $X_{(e'-1)/2}\to \cdots \to X_0=X$ at non-clean closed points in the same way as in Case~\ref{case5-1}. Unlike in Case~\ref{case5-1}, the character $\chi$ is still not clean on $X_{(e'-1)/2}$. 

The scheme $X_{(e'-1)/2}$ contains the open subscheme $Y_{(e'-1)/2}=\Spec\calO_K[y_{(e'-1)/2}]$. We put $y'=y_{(e'-1)/2}$. Let $W\to X_{(e'-1)/2}$ be the blowup at the closed point $\pi=y'=0$. The scheme $W$ is the union of two open subschemes $U=\Spec \calO_K[y', w]/(y'w-\pi)$ and $V=\Spec \calO_K[y'']$,  where $y''\pi=y'$. The character $\chi$ is unramified on $V$. 
On $U$, the character $\chi$ is defined by 
\[
t^p=(-1)^a\left(y'^{(e'+1)/2}w^{(e'-1)/2}-a/c\right)^a\left(y'^{(e'+1)/2}w^{(e'-1)/2}+b/c\right)^b. 
\]
We can check $\chi$ is not clean at the closed point $\{y'=w=0\}$. Further, let $W'\to W$ be the blowup at the closed point $\{y'=w=0\}$ and $U'$ be the open subscheme $\Spec \calO_K[y', w, w']/(y'w'-w, y'w-\pi)$. Then the the character $\chi$ is defined by 
\[
t^p=(-1)^a\left(y'^{e'}w'^{(e'-1)/2}-a/c\right)^a\left(y'^{e'}w'^{(e'-1)/2}+b/c\right)^b, 
\]
and the refined Swan conductor of $\chi$ is 
\begin{align*}
\rsw(\chi)
&=\frac{-2^{-1}a^{a-1}b^{b-1}c^{3-a-b}(e'-1) \cdot d\log w'}{(1-\zeta_p)^py'^{-2e'}w'^{-(e'-1)}\cdot \left(y'^{e'}w'^{(e'-1)/2}-a/c\right)\left(y'^{e'}w'^{(e'-1)/2}+b/c\right)}
\end{align*}
by \cite[Corollary 8.2.3]{KS13}. Hence the character $\chi$ is clean on $W'$. 

We see that the coefficient $s_{z_0}$ is equal to $e'-(\sum_{1\le i\le (e'-1)/2}2+1)=0$ by \cite[Remark 5.8]{Ka94}. Hence we have 
\[
\FCC j_!\calK-\FCC j_!\Lambda=-e'[L']+\left[\rmF^*T^*_{X_F}X\right]-p\left[\rmF^*T^*_{z_0}X\right]+p\left[\rmF^*T_{z_1}^*X\right]+p\left[\rmF^*T_{z_2}^*X\right]+p\left[\rmF^*T_{z_3}^*X\right].
\]
Computing  as in Case~\ref{case5-1}, we obtain  
\[
(\FCC j_!\calK-\FCC j_!\Lambda, FT^*_XX|_{X_F})_{FT^*X|_{X_F}}=0, 
\]
and the Swan conductor of $\rmH^1(\mathbf{P}^1_{\overline{K}}, j_!\calK)$ is 0 by Theorem~\ref{main}.

%%%%%%%%%%%%%%%%%%%%%
% References
%%%%%%%%%%%%%%%%%%%%%

\end{document}